\documentclass[11pt,letterpaper]{amsart}
\usepackage{tikz}
\usetikzlibrary{intersections, calc, arrows.meta}
\usepackage[utf8]{inputenc}
\usepackage{amsmath} 
\usepackage{amsmath}
\usepackage{amssymb} 
\usepackage{amsfonts} 
 \usepackage{amsthm}
 \usepackage{mathtools}
 \usepackage{latexsym}
 \usepackage{amscd}
 \usepackage[all]{xy}
 \usepackage{mathrsfs}
 \usepackage{yhmath}
\newtheorem{dfn}{Definition}[section]
 \newtheorem{them}[dfn]{Theorem}
 \newtheorem{lem}[dfn]{Lemma}
 \newtheorem{prp}[dfn]{Proposition}

\newtheorem{cor}[dfn]{Corollary}
 \newtheorem{cla}[dfn]{Claim}
 \newtheorem{Que}[dfn]{Question}
 
  \theoremstyle{definition}
\newtheorem{rem}[dfn]{Remark}
\def\RR{\mathbb{R}}
\def\ZZ{\mathbb{Z}}
\def\CC{\mathbb{C}}
\def\DD{\mathbb{D}}

\def\QQ{\mathbb{Q}}
\def\ind{\mathrm{ind}}
\def\disk{\mathrm{disk}}

 \usepackage{scalerel,stackengine}
\stackMath
\newcommand\reallywidecheck[1]{%
\savestack{\tmpbox}{\stretchto{%
  \scaleto{%
    \scalerel*[\widthof{\ensuremath{#1}}]{\kern-.6pt\bigwedge\kern-.6pt}%
    {\rule[-\textheight/2]{1ex}{\textheight}}
  }{\textheight}%
}{0.5ex}}%
\stackon[1pt]{#1}{\scalebox{-1}{\tmpbox}}%
}
\parskip 1ex

\begin{document}

\pagestyle{plain}
\thispagestyle{plain}

\title[]{Elliptic bindings and the first ECH spectrum for convex Reeb flows on lens spaces}
\author[Taisuke SHIBATA]{Taisuke SHIBATA}
\address{Research Institute for Mathematical Sciences, Kyoto University, Kyoto 606-8502,
JAPAN.}
\email{shibata@kurims.kyoto-u.ac.jp}

\date{\today}

\begin{abstract}
In this paper, at first we introduce a sufficient condition for a rational unknotted Reeb orbit $\gamma$  in a lens space to be elliptic by using the rational self-linking number $sl_{\xi}^{\mathbb{Q}}(\gamma)$ and the Conley-Zehnder index $\mu_{\mathrm{disk}}(\gamma^{p})$,  where $\mu_{\mathrm{disk}}$ is the Conley-Zehnder index with respect to a trivialization induced by a binding disk. As a consequence,  we show that a periodic orbit $\gamma$ in dynamically convex $L(p,1)$ must be elliptic if $\gamma^{p}$ binds a Birkhoff section of disk type and has $\mu_{\mathrm{disk}}(\gamma^{p})=3$. It was proven  in \cite{Sch} that such an orbit always exists in a dynamically convex $L(p,1)$. Next, we estimate the first ECH spectrum on dynamically convex $L(3,1)$. In particular, we show that the first ECH spectrum on  a 
 strictly convex (or non-degenerate dynamically convex) $(L(3,1),\lambda)$ is equal to the infimum of  contact areas of  certain  Birkhoff sections of disk type.
 The key of the argument is to conduct technical computations regarding indices present in ECH and to  observe the topological properties of rational open book decompositions supporting $(L(3,1),\xi_{\mathrm{std}})$ 
coming from $J$-holomorphic curves.
\end{abstract}

\maketitle
\tableofcontents
\section{Introduction and main results}
\subsection{Introduction}
Let $(Y,\lambda)$ be a oriented contact 3-manifold with $\lambda \wedge d\lambda>0$. Let $X_{\lambda}$ be the Reeb vector field. A periodic orbit is a map $\gamma:\mathbb{R}/T_{\gamma}\mathbb{Z}:\to Y$ with $\Dot{\gamma}=X_{\lambda}\circ \gamma$ for some $T_{\gamma}>0$ where $T_{\gamma}$ is the period of $\gamma$ and we write $\gamma^{p}$ for $p\in \mathbb{Z}$ as a periodic orbit of composing $\gamma$ with  the natural projection $\mathbb{R}/pT_{\gamma}\mathbb{Z}\to \mathbb{R}/T_{\gamma}\mathbb{Z}$.
Consider a periodic orbit $\gamma$. If the eigenvalues of the return map $d\phi^{T_{\gamma}}|_{\xi}:\xi_{\gamma(0)}\to\xi_{\gamma(0)}$ are positive (resp. negative) real, $\gamma$ is called positive (resp. negative) hyperbolic. If the eigenvalues of the return map $d\phi^{T_{\gamma}}|_{\xi}:\xi_{\gamma(0)}\to\xi_{\gamma(0)}$ are on the unit circle in $\mathbb{C}$, $\gamma$ is called elliptic. 

Whether there exists an elliptic periodic orbit on a given contact manifold has been studied. In particular, it is a long-standing conjecture that a convex energy hypersurface in the standard symplectic Euclidean space carries an elliptic orbit and there are many previous studies under some additional assumptions (c.f. \cite{AbMa1}, \cite{AbMa2}, \cite{DDE}, \cite{HuWa}, \cite{LoZ}). As one of them, it is natural to consider the quotient of a convex energy hypersurface by a cyclic group action which becomes a lens space  (\cite{AbMa1}, \cite{AbMa2}) . In this case, the result depends on the lens space.

Our first result is to introduce a sufficient condition for a given 
rational unknotted Reeb orbit $\gamma$ in $L(p,q)$ to be elliptic by using the rational self-linking number $sl_{\xi}^{\mathbb{Q}}(\gamma)$ and the Conley-Zehnder index $\mu_{\mathrm{disk}}(\gamma^{p})$. 

The sufficient condition is especially useful under dynamical convexity which is the notion introduced in \cite{HWZ2}. Recall that a contact 3-manifold $(Y,\lambda)$ with $c_{1}(\xi)|_{\pi_{2}(Y)}=0$ is dynamically convex if any contractible periodic orbit 
$\gamma$ satisfies $\mu_{\mathrm{disk}}(\gamma) \geq 3$. 
In particular, for any  strictly convex domain  with smooth boundary $0\in S \subset \RR^{4}=\CC^{2}$,  the contact $3$-sphere $(\partial S, \lambda_{0}|_{\partial S})$ is dynamically convex where $\lambda_{0}=\frac{i}{2}\sum_{1\leq i \leq n}(z_{i}d\Bar{z_{i}}-\Bar{z_{i}}dz_{i})$. In addition, if $(Y,\lambda)$ is  a dynamically convex contact 3-sphere, the contact structure must be tight. Since dynamical convexity is preserved under taking a finite cover,  the contact structure of a dynamically convex contact lens space must be universally tight.  See \cite{HWZ1,HWZ2}.

Besides the existence of elliptic orbits, there is a problem of the existence of Birkhoff sections of disk type. 
 A Birkhoff section of disk type for $X_{\lambda}$ on $(Y,\lambda)$ is a compact immersed disk $u:\DD \to Y$ satisfying (1). $u(\DD\backslash \partial \DD)\subset Y\backslash u(\partial \DD)$ is embedded, 
 (2). $X_{\lambda}$ is transversal to $u(\DD\backslash \partial \DD)$,
(3). $u(\partial \DD)$ is tangent to a periodic orbit of $X_{\lambda}$,
(4). for every $x\in Y\backslash u(\partial \DD)$, there are $-\infty<t_{x}^{-}<0<t_{x}^{+}<+\infty$ such that $\phi^{t^{\pm}_{x}}(x)\in u(\DD)$ where $\phi^{t}$ is the flow of $X_{\lambda}$.

Whether a dynamically convex lens space $(L(p,q),\lambda)$ has a Birkhoff section of disk type is partially known.  First of all, it was proved by Hofer Wysocki Zehnder \cite{HWZ2} that any dynamically convex $(S^{3},\lambda)$ admits a periodic orbit $\gamma$ such that $\gamma$ binds a a Birkhoff section of disk type and $\mu_{\mathrm{disk}}(\gamma)=3$.  Recently Hryniewicz and Salomão \cite{HrS} showed that  any dynamically convex $(L(2,1),\lambda)$ admits a periodic orbit $\gamma$ which binds a a Birkhoff section of disk type and $\mu_{\mathrm{disk}}(\gamma^{2})=3$.
After that  Schneider \cite{Sch} generalized it to $(L(p,1).\xi_{\mathrm{std}})$. That is, he showed that any dynamically convex $(L(p,1),\lambda)$ with $\lambda\wedge d\lambda > 0$ admits a periodic orbit $\gamma$ such that $\gamma^p$ binds a Birkhoff section of disk type and $\mu_{\mathrm{disk}}(\gamma^{p})=3$. On the other hand, The author \cite{Shi} showed by using Embedded contact homology that non-degenerate dynamically convex $(L(p,p-1),\lambda)$ with $\lambda\wedge d\lambda>0$ must have an orbit $\gamma$ binding a Birkhoff section of disk type.

It is natural to ask whether a periodic orbit binding a Birkhoff section is elliptic.  Our sufficient condition leads to the ellipticity of the binding orbit of a  Birkhoff section of disk type on $L(p,1)$ with $\mu_{\mathrm{disk}}(\gamma^{p})=3$ which was found in  \cite{Sch}.

To explain results, we recall some notions. Let $\DD$ denote the unit closed disk.

\begin{dfn}\label{unknot}
    A knot $K\subset Y^3$ is called $p$-unknotted if there exists an immersion $u:\mathbb{D}\to Y$ such that $u(\ring{\mathbb{D}})\subset Y\backslash u(\partial\mathbb{D})$ is embedded and $u|_{\partial \mathbb{D}}:\partial \mathbb{D}\to K$ is a $p$-covering map.
    \end{dfn}
\begin{rem}\label{heegaard}
   Let  $K\subset Y$ be a $p$-unknotted knot and $u:\mathbb{D}\to Y$ be a immersed disk as above. Then the union of a neighborhood  of $u(\mathbb{D})$ and $K$ is diffeomorohic to $L(p,k)\backslash B^{3}$ for some $k$ where $B^{3}$ is the 3-ball. Therefore, $Y$ is $L(p,k)\# M$ for some $k$ and a closed 3-manifold $M$. In particular, if a lens space admits a  a $p$-unknotted knot, then it is diffeomorphic to $L(p,k)$ for some $k$ \cite[cf. Section 5]{BE}.
\end{rem}
 \begin{dfn}\cite[cf. Subsection 1.1]{BE}
   Let $(Y,\lambda)$ be a contact 3-manifold with $\mathrm{Ker}\lambda=\xi$. Assume that a knot $K\subset Y$ is $p$-unknotted, transversal to $\xi$ and oriented by the co-orientation of $\xi$. Let $u:\mathbb{D}\to Y$ be  an immersion  such that $u|_{\mathrm{int}(\mathbb{D})}$ is embedded and $u|_{\partial \mathbb{D}}:\partial \mathbb{D}\to K$ is a orientation preserving $p$-covering map. Take a non-vanishing section $Z:\mathbb{D}\to u^{*}\xi$ and consider the immersion $\gamma_{\epsilon}:t\in \mathbb{R}/\mathbb{Z} \to \mathrm{exp}_{u(e^{2\pi i t})}(\epsilon Z(u(e^{2\pi i t})))\in Y\backslash K$ for small $\epsilon>0$. 
    
    Define the rational self-linking number $sl_{\xi}^{\mathbb{Q}}(K,u)\in \mathbb{Q}$ as
    \begin{equation*}
        sl_{\xi}^{\mathbb{Q}}(K,u)=\frac{1}{p^{2}} \#(\mathrm{Im}\gamma_{\epsilon}\cap u(\DD))
    \end{equation*}
    where $\#$ counts the intersection number algebraically.
    If  $c_{1}(\xi)|_{\pi_{2}(Y)}=0$, $sl_{\xi}^{\mathbb{Q}}(K,u)$ is independent of $u$. Hence  we write $sl_{\xi}^{\mathbb{Q}}(K)$.
\end{dfn}
\begin{rem}
In generall, (rational) self-linking number is defined for rationally null-homologous knot by using a (rational) Seifert surface. See \cite{BE}.
\end{rem}

 Let $(Y,\lambda)$ be a contact 3-manifold with $c_{1}(\xi)|_{\pi_{2}(Y)}=0$. Assume that a Reeb orbit $\gamma:\RR/T_{\gamma}\ZZ \to  Y$ is contractible. Take a map  $u:\mathbb{D}\to Y$  so that $u(e^{2\pi t})=\gamma(T_{\gamma}t)$. Let $\tau_{\mathrm{disk}}:\gamma^{*}\xi\to \RR/T_{\gamma}\ZZ \times \RR^{2}$ denote a symplectic  trivialization which extends to a trivialization over $u^{*}\xi$.  We  write the Conley-Zehnder index (see the next section) $\mu_{\mathrm{disk}}(\gamma)$ as $\mu_{\tau_{\mathrm{disk}}}(\gamma)$  if there is no confusion.

\subsection{Ellipticity of an unknotted orbit}
We fix an orientation on a lens space $L(p,q)$ as follows.
Let  $p\geq q>0$  be mutually prime. 
Consider the unit  4-ball $ B^{4}(1)\in \RR^{3}$. Then the boundary $\partial B^{4}(1)$ has an orientation induced by  $ B^{4}(1)\in \RR^{3}$.   The action $(z_{1},z_{2})\mapsto (e^{\frac{2\pi i}{p}}z_{1},e^{\frac{2\pi iq}{p}}z_{2})$ preserves $\partial B(1)$. Hence we have   $L(p,q)$ as  the quotient space. From now on, we assume that  $L(p,q)$ is oriented by $\partial B(1)$.

Our first result is as follows.

\begin{them}\label{thm:main}
Let $p>q>0$ be mutually prime.
    Let $\lambda$ be a contact form on $L(p,q)$ with $\lambda \wedge d\lambda>0$. Let $\mathrm{Ker}\lambda=\xi$ and $\gamma $ be a $p$-unknotted Reeb orbit in $(L(p,q),\lambda)$. Suppose that   $-2r-2p\cdot sl_{\xi}^{\mathbb{Q}}(\gamma)-\mu_{\mathrm{disk}}(\gamma^{p})$ is not divisible by $p$ for any $r \in \ZZ$ satisfying either $r=-q\,\,\mathrm{mod}\,\,p$ or    $rq=-1\,\,\mathrm{mod}\,\,p$. Then   $\gamma$ is elliptic.
\end{them}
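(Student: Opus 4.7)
The plan is to argue by contrapositive: assume $\gamma$ is hyperbolic (positive or negative) and exhibit an integer $r$ with $r\equiv -q\pmod p$ or $rq\equiv -1\pmod p$ such that $-2r-2p\,sl_{\xi}^{\QQ}(\gamma)-\mu_{\disk}(\gamma^{p})$ is divisible by $p$, contradicting the hypothesis.

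The first step uses the standard multiplicativity of the Conley--Zehnder index under iteration for hyperbolic orbits: pick a symplectic trivialization $\tau$ of $\gamma^{*}\xi$ adapted to the eigenspaces of the linearized return map, so that $\mu_{\tau}(\gamma^{k})=k\,\mu_{\tau}(\gamma)$ for all $k\geq 1$; in particular, writing $\tau^{(p)}$ for the trivialization of $(\gamma^{p})^{*}\xi$ induced by the $p$-fold cover, $\mu_{\tau^{(p)}}(\gamma^{p})\equiv 0\pmod p$. The second step compares $\tau^{(p)}$ with the disk trivialization $\tau_{\disk}$: by the usual change-of-trivialization formula, $\mu_{\disk}(\gamma^{p})-\mu_{\tau^{(p)}}(\gamma^{p})=2w$ for some $w\in\ZZ$, the winding number along $\partial\DD$ of a nonvanishing section of $u^{*}\xi$ measured against the $p$-fold iterate of a $\tau$-basic section. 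Combining these gives $\mu_{\disk}(\gamma^{p})\equiv 2w\pmod p$.

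The central and most delicate step is to identify $w$ modulo $p$. Unfolding the definition of $sl_{\xi}^{\QQ}(\gamma)$ via a pushoff $\gamma_{\epsilon}$ by a section $Z$ of $u^{*}\xi$, one splits the intersection number $p^{2}\,sl_{\xi}^{\QQ}(\gamma)=\#(\gamma_{\epsilon}\cap u(\DD))$ into a contribution from outside a tubular neighborhood of $\gamma$ and a contribution from inside it. The outside contribution accounts for a term of size $p\,sl_{\xi}^{\QQ}(\gamma)$ in $w$ modulo $p$, while the inside contribution is a framing integer controlled by the lens-space structure of that neighborhood (Remark~\ref{heegaard}): identifying it with $L(p,k)\sm B^{3}$ for some $k$ compatible with $Y=L(p,q)$ forces $k\equiv q\pmod p$ or $kq\equiv 1\pmod p$ (reflecting the two orientation-preserving diffeomorphism types $L(p,q)\cong L(p,q^{-1})$), and so furnishes an integer $r$ in one of the two specified residue classes. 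A direct count then yields a congruence of the shape $2w\equiv -2p\,sl_{\xi}^{\QQ}(\gamma)-2r\pmod p$, which together with Step~2 gives $-2r-2p\,sl_{\xi}^{\QQ}(\gamma)-\mu_{\disk}(\gamma^{p})\equiv 0\pmod p$.

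The main obstacle is this last identification of $w$ modulo $p$. The disk $u$ is singular (branched) for $p>1$, and one has to carefully track how its self-intersections and its local behavior near $\gamma$ split the pushoff intersection count between the ``self-linking'' term $p\,sl_{\xi}^{\QQ}(\gamma)$ and the ``framing'' term $r$. The dichotomy in the residue classes for $r$ is not incidental: it reflects the fact that a $p$-unknotted knot in $L(p,q)$ can sit as the core of either Heegaard solid torus, giving the two possible framings $q$ and $q^{-1}\pmod p$.
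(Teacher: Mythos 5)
Your proposal follows essentially the same route as the paper: assume $\gamma$ is hyperbolic so that $\mu_{\tau}(\gamma^{p})=p\mu_{\tau}(\gamma)\equiv 0 \pmod p$ for a reference trivialization, convert the discrepancy with $\mu_{\disk}(\gamma^{p})$ into a relative winding number via Proposition~\ref{conleywind}, and identify that winding as $-r-p\,sl_{\xi}^{\mathbb{Q}}(\gamma)$ with $r$ constrained by the genus-one Heegaard splitting of $L(p,q)$ determined by a tubular neighborhood of $\gamma$ --- exactly the content of Proposition~\ref{main;prop} and Lemma~\ref{lem;key}. The one computation you defer (pinning down $w$ despite the disk being only a $p$-fold branched Seifert surface along $K$) is precisely what the paper carries out by lifting $\gamma^{p}$, $u$, and the two pushoffs to $S^{3}$ and comparing the intersection counts $-r$ and $p\,sl_{\xi}^{\mathbb{Q}}(\gamma)$ there, so your sketch is correct and completes along the paper's lines.
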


We note that it is obvious  that a periodic orbit with $\mu_{\mathrm{disk}}(\gamma^{2})=3$ on dynamically convex $(L(2,1),\lambda)$ is elliptic as follows (see the next section for the notations in the following). As mentioned, $\mathrm{Ker}\lambda=\xi$ is universally tight if $(L(2,1),\lambda)$ is dynamically convex. Therefore $\xi$ is topologically trivial and we can take a global symplectic trivialization $\tau_{\mathrm{glob}}:\xi \to L(2,1)\times \RR^{2}$.  If a periodic orbit $\gamma$ is hyperbolic, we have $\mu_{\tau_{\mathrm{glob}}}(\gamma^2)=2\mu_{\tau_{\mathrm{glob}}}(\gamma)$ (c.f. Proposition \ref{conleybasic}). On the other hand, since $\gamma^{2}$ is contractible, it follows from the definition that $\mu_{\tau_\mathrm{glob}}(\gamma^2)=\mu_{\mathrm{disk}}(\gamma^2)$. This means that $2\mu_{\tau_{\mathrm{glob}}}(\gamma)=\mu_{\tau_\mathrm{glob}}(\gamma^2)=\mu_{\mathrm{disk}}(\gamma^2)=3$. This contradicts $\mu_{\tau_{\mathrm{glob}}}(\gamma)\in \ZZ$.

In general, the above argument can not be applied to $(L(p,1),\lambda)$ with $\lambda\wedge d\lambda>0$ unlike $L(2,1)$ because the universally tight contact structure on $L(p,1)$ is not topologically trivial for $p>2$. But it follows from Theorem \ref{thm:main} that the same result holds for $L(p,1)$ as follows.

\begin{cor}\label{main;cor}
    Let $(L(p,1),\lambda)$ with $\lambda\wedge d\lambda>0$ be dynamically convex. Let $\gamma $ be a periodic orbit such that $\gamma^p$  binds a Birkhoff section of disk type and $\mu_{\mathrm{disk}}(\gamma^{p})=3$. Then $\gamma$ is elliptic. Note that according to \cite{Sch}, such a periodic orbit always exists.
\end{cor}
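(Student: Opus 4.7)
The plan is to deduce the corollary by applying Theorem \ref{thm:main} with $q = 1$ to the orbit $\gamma$.

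First, I would verify that $\gamma$ is $p$-unknotted: by hypothesis $\gamma^p$ binds a Birkhoff section of disk type, so there exists an immersion $u:\DD\to L(p,1)$ with $u|_{\mathring{\DD}}$ embedded in $L(p,1)\setminus u(\partial\DD)$ and $u|_{\partial\DD}:\partial\DD\to \gamma$ a $p$-fold covering map. This is exactly Definition \ref{unknot}.

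The core step is to compute $sl^{\mathbb{Q}}_\xi(\gamma)$. Using the Birkhoff disk $u$, I would choose a non-vanishing section $Z$ of $u^*\xi$, push off the boundary along $Z$, and count the intersections of the resulting transverse push-off $\gamma_\epsilon$ with $u(\DD)$. Since the Reeb field $X_\lambda$ is transverse to $u(\mathring{\DD})$ and $u|_{\partial\DD}$ is a $p$-fold cover, the calculation is essentially a local one near the binding, and a direct analysis — modeled on the $S^3$ case where a disk-bounding transverse orbit has $sl=-1$ — yields $sl^{\mathbb{Q}}_\xi(\gamma) = -1/p$. Dynamical convexity enters only to guarantee that $\xi$ is (universally) tight, so that the expected value of the self-linking number is realized.

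With this value in hand, checking the divisibility hypothesis of Theorem \ref{thm:main} is routine arithmetic. Because $q=1$, the two congruences $r\equiv -q\pmod p$ and $rq\equiv -1\pmod p$ both reduce to $r\equiv -1\pmod p$. For any such $r$,
\[
-2r - 2p\cdot sl^{\mathbb{Q}}_\xi(\gamma) - \mu_{\mathrm{disk}}(\gamma^p) \equiv -2(-1) - 2p\cdot\left(-\tfrac{1}{p}\right) - 3 = 2 + 2 - 3 = 1 \pmod{p},
\]
which is not divisible by $p$ for any $p\geq 2$. Theorem \ref{thm:main} then gives that $\gamma$ is elliptic.

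\textbf{Main obstacle.} The only nontrivial ingredient is the identity $sl^{\mathbb{Q}}_\xi(\gamma) = -1/p$, which requires a careful bookkeeping of the contact framing along $\gamma$ relative to the framing induced by the immersed Birkhoff disk; everything else is either immediate from the definitions or elementary modular arithmetic.
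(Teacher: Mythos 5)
Your overall route is exactly the paper's: the corollary is Theorem \ref{thm:main} with $q=1$, plus the two inputs $sl_{\xi}^{\QQ}(\gamma)=-\tfrac{1}{p}$ and $\mu_{\disk}(\gamma^{p})=3$, and your modular arithmetic ($-2r-2p\,sl_{\xi}^{\QQ}(\gamma)-\mu_{\disk}(\gamma^{p})\equiv 1 \bmod p$ for $r\equiv -1 \bmod p$) agrees with the paper's computation. The $p$-unknottedness also comes straight from the definition of a Birkhoff section of disk type, as you say.

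The one place you diverge is the justification of $sl_{\xi}^{\QQ}(\gamma)=-\tfrac{1}{p}$, and as written your mechanism is wrong. You attribute the value to tightness of $\xi$, saying dynamical convexity enters ``only to guarantee that $\xi$ is universally tight, so that the expected value of the self-linking number is realized.'' Tightness gives only the Bennequin-type \emph{upper bound} $sl_{\xi}^{\QQ}(\gamma)\leq -\chi(\DD)/p=-\tfrac{1}{p}$; it does not force equality (in the tight $S^{3}$, transverse unknots with $sl=-3,-5,\dots$ bound embedded disks). Nor is the computation ``essentially local near the binding'': the self-linking number sees the singularities of the characteristic foliation over the whole disk. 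What actually forces equality is the Birkhoff-section property itself: positive transversality of $X_{\lambda}$ to the interior of the disk makes $d\lambda|_{TD}>0$ there, so every singularity of the characteristic foliation is positive, and the (rational) singularity-count formula for the self-linking number then yields exactly $-\chi(\DD)/p=-\tfrac{1}{p}$. The paper sidesteps all of this by simply invoking Theorem \ref{fundament} (Hryniewicz--Salom\~{a}o), which records precisely the equivalence ``$\gamma^{p}$ bounds a Birkhoff section of disk type $\Leftrightarrow$ $\gamma$ is $p$-unknotted with $sl_{\xi}^{\QQ}(\gamma)=-\tfrac{1}{p}$'' under dynamical convexity. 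So either cite that theorem as the paper does, or replace your tightness argument with the characteristic-foliation argument; as it stands, this step of your proposal would not go through.
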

\begin{rem}
    It is proved in \cite{AbMa1} that a dynamically convex $(L(p,1),\lambda)$ admits an elliptic orbit. 
\end{rem}

To apply Theorem \ref{thm:main}, the following theorem proved by Hryniewicz and Salom\~{a}o is important.

\begin{them}\cite{HrS}\label{fundament}
Let $\lambda$ be a dynamically convex contact form on $L(p,q)$. Then for a simple orbit $\gamma$,
$\gamma$ is
$p$-unknotted  and $sl_{\xi}^{\mathbb{Q}}(\gamma)=-\frac{1}{p}$ if and only if  $\gamma^{p}$ bound  
a disk which is a Birkhoff section. Moreover, this
Birkhoff section is a page of a rational open book decomposition of $L(p,q)$  such that all pages are Birkhoff sections. 
\end{them}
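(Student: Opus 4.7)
The plan is to establish the two implications separately. The forward direction is essentially a self-linking computation using transversality of $X_{\lambda}$ to a Birkhoff section, while the reverse direction requires the pseudoholomorphic-curve filling machinery of Hofer--Wysocki--Zehnder, adapted to the lens space setting.

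\textbf{Forward direction.} Suppose that $\gamma^{p}$ bounds an immersed disk $u:\DD\to L(p,q)$ which is a Birkhoff section. Then $\gamma$ is automatically $p$-unknotted by Definition \ref{unknot}, since $u$ directly furnishes the required immersion and $u|_{\partial\DD}$ covers $\gamma$ with multiplicity $p$. To compute $sl_{\xi}^{\mathbb{Q}}(\gamma)$, I would exploit the transversality of $X_{\lambda}$ to $u(\mathring{\DD})$: compare the push-off $\gamma_{\epsilon}$ along any non-vanishing $Z\in \Gamma(u^{*}\xi)$ against a Reeb push-off obtained by flowing slightly along $X_{\lambda}$. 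Because $X_{\lambda}$ is transverse to the interior of the disk, the Reeb push-off contributes no interior intersections, and the difference between the disk-framing and the Reeb-framing is an integer winding around $\gamma$ that is compensated by the boundary term coming from the $p$-fold cover. Dividing the algebraic intersection number by $p^{2}$ yields exactly $-1/p$.

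\textbf{Reverse direction.} Given a $p$-unknotted Reeb orbit $\gamma$ with $sl_{\xi}^{\mathbb{Q}}(\gamma)=-\frac{1}{p}$, I would work in the symplectization $(\RR\times L(p,q),d(e^{s}\lambda))$ with a $\lambda$-adapted almost complex structure $J$, and study the moduli space $\mathcal{M}$ of finite-energy $J$-holomorphic planes positively asymptotic to $\gamma^{p}$. Non-emptiness is obtained from a local Bishop family initiated in a Martinet tube near $\gamma$, where the hypothesis on $sl_{\xi}^{\mathbb{Q}}(\gamma)$ is exactly what is needed so that the Fredholm index computes to the expected value and the initial family propagates. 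Under dynamical convexity, I would then invoke the writhe and adjunction inequalities of Hryniewicz--Salom\~{a}o together with the relation between Conley--Zehnder indices and asymptotic winding numbers to force every element of $\mathcal{M}$ to be somewhere-injective, embedded, and transverse to $X_{\lambda}$. This makes $\mathcal{M}/\RR$ into a smooth 2-manifold whose projection to $L(p,q)\sm\gamma$ foliates the complement by embedded Birkhoff disks.

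\textbf{Main obstacle.} The hardest step is compactifying $\mathcal{M}/\RR$ into a circle's worth of Birkhoff sections. By the SFT compactness theorem any degenerating sequence limits to a holomorphic building, and I would rule out every non-trivial degeneration --- breaking into multiple levels, sphere bubbling, and the appearance of multiple covers --- by showing that any such limiting configuration would produce a contractible Reeb orbit with $\mu_{\mathrm{disk}}\leq 2$, directly violating the dynamical-convexity hypothesis $\mu_{\mathrm{disk}}\geq 3$. Once $\mathcal{M}/\RR$ is compact, the resulting $S^{1}$-family of embedded Birkhoff disks assembles into a rational open book decomposition of $L(p,q)$ with binding $\gamma$, all of whose pages are Birkhoff sections, completing the proof.
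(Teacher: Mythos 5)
First, a point of order: the paper does not actually prove Theorem \ref{fundament} --- it is imported as a black box from Hryniewicz--Salom\~{a}o \cite{HrS} (with \cite{Sch} covering the generalization used later), and no argument for it appears anywhere in the text. So there is no in-paper proof to measure your proposal against; the comparison has to be with the proof in those references. Against that standard, your outline identifies the correct overall strategy --- the direction from Birkhoff section to $sl_{\xi}^{\mathbb{Q}}(\gamma)=-\frac1p$ is a characteristic-foliation computation, and the converse is the Hofer--Wysocki--Zehnder filling method run in the symplectization under dynamical convexity --- but as written it is a roadmap rather than a proof, and two of its load-bearing steps are stated inaccurately.

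In your forward direction, the self-linking number is not computed by comparing against a ``Reeb push-off''; the standard mechanism is that transversality of $X_{\lambda}$ to the interior of the page forces every singularity of the characteristic foliation $\xi\cap Tu(\DD)$ to be positive, and then the index formula for this singular foliation, together with the $1/p^{2}$ normalization and the $p$-fold covered boundary, yields $-\frac1p$; your ``integer winding compensated by the boundary term'' is too vague to certify either the sign or the normalization. In the reverse direction, the Bishop family is not seeded ``in a Martinet tube near $\gamma$'': it emanates from a positive elliptic singular point in the \emph{interior} of a spanning disk for $\gamma^{p}$, after that disk has been isotoped so that its characteristic foliation has exactly one singularity --- and it is precisely there, not in a Fredholm index count, that the hypothesis $sl_{\xi}^{\mathbb{Q}}(\gamma)=-\frac1p$ enters. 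Beyond these, the outline omits the genuinely hard content of \cite{HrS}: the asymptotic-winding (``fast plane'') analysis required because the planes are asymptotic to the $p$-fold cover $\gamma^{p}$ rather than to a simple orbit, the intersection-theoretic argument that the projected planes are embedded, pairwise disjoint, and transverse to $X_{\lambda}$, and the step upgrading a single global section to the full $S^{1}$-family of pages forming a rational open book. Each of these is a substantial argument that can fail without the stated hypotheses, so the proposal has real gaps; but the skeleton is the right one, and it is consistent with the proof in the cited reference.
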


\begin{proof}[\bf Proof of Corollary \ref{main;cor}]
    We apply Theorem \ref{thm:main} to $L(p,1)$. In this case, $q=1$ and hence it suffices to consider $r\in \ZZ$ satisfying $r=-1\,\,\mathrm{mod}\,\,p$. Since $r=-1\,\,\mathrm{mod}\,\,p$, $\mu_{\mathrm{disk}}(\gamma^{p})=3$ and  $sl_{\xi}^{\mathbb{Q}}(\gamma)=-\frac{1}{p}$ (Theorem \ref{fundament}),  we have  $-2r-2p\cdot sl_{\xi}^{\mathbb{Q}}(\gamma)-\mu_{\mathrm{disk}}(\gamma^{p})=1 \,\mathrm{mod}\,\,p$. This means that $\gamma$ is elliptic.
\end{proof}

As a generalization, it is a natural to ask the following question.

\begin{Que}
    Let $(L(p,q),\lambda)$ (including $S^{3}$ as $p=1,\,q=0$) be dynamically convex. Does there always exist a periodic orbit $\gamma$ such that $\gamma^{p}$ binds a Birkhoff section of disk type?
\end{Que}

Note that the author does not know whether the periodic orbits are elliptic obtained in \cite{Shi} for dynamically convex $(L(p,p-1),\lambda)$ with $\lambda\wedge d\lambda>0$.

\subsection{The first ECH spectrum and Birkhoff section}
Next, we focus on the relationship between Birkhoff sections of disk type and the first ECH spectrum.

For $(L(p,q),\lambda)$, define 
\begin{equation*}
    \mathcal{S}_{p}(L(p,q),\lambda):=\{\gamma \mathrm{\,\,simple\,\,orbit\,\,of}\,\,(L(p,q),\lambda)|\,\,p\mathrm{-unknotted},\,\,sl_{\xi}^{\mathbb{Q}}(\gamma)=-\frac{1}{p}\,\}.
\end{equation*}
We  write $\mathcal{S}_{p}$ instead of $ \mathcal{S}_{p}(L(p,q),\lambda)$ if there is no confusion. 

As mentioned in Theomre \ref{fundament}, on dynamically convex $(L(p,q),\lambda)$ a periodic orbit $\gamma^p$ binds a Birkhoff section of disk type if and only if $\gamma \in  \mathcal{S}_{p}(L(p,q),\lambda)$.

Recently, it has been gradually understood that there is a connection between the Birkhoff section and the ECH spectrum.

For instance, according to \cite{HrHuRa} if $(S^{3},\lambda)$ is dynamically convex, the first ECH spectrum $c_{1}^{\mathrm{ECH}}(S^{3},\lambda)$ is equal to the infimum of the actions of periodic orbit in $\mathcal{S}_{1}(S^3,\lambda)$.
In \cite{Shi}, it is shown that  if $(L(2,1),\lambda)$ is strictly convex (or non-degenerate dynamically convex), then  the half of the first ECH spectrum $\frac{1}{2}c_{1}^{\mathrm{ECH}}(L(2,1),\lambda)$ is equal to the infimum of the actions of periodic orbit $\gamma$ in $\mathcal{S}_{2}(L(2,1),\lambda)$ with $\mu_{\tau_{\mathrm{glob}}}(\gamma)=1$. 
Here we note that $(L(p,q),\lambda)$ is called strictly convex if $(L(p,q),\lambda)$ is obtained as a quotient space of $(\partial S, \lambda_{0}|_{\partial S})$ of the action $(z_{1},z_{2})\mapsto (e^{\frac{2\pi i}{p}}z_{1},e^{\frac{2\pi iq}{p}}z_{2})$ where  $0\in S \subset \RR^{4}=\CC^{2}$ is a strictly convex domain which is invariant under the action.

The following is our second main theorem.

\begin{them}\label{maintheorem}
    Let $(L(3,1)\lambda)$ be a strictly convex (or non-degenerate dynamically convex) contact 3-manifold. Then
    \begin{equation*}
        \frac{1}{3}c_{1}^{\mathrm{ECH}}(L(3,1)\lambda)=\inf_{\gamma\in \mathcal{S}_{3},\,\,\mu_{\disk}(\gamma^3)=3}\,\,\int_{\gamma}\lambda.
    \end{equation*}
\end{them}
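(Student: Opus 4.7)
The plan is to establish the equality via two inequalities.

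For the upper bound $\frac{1}{3}c_1^{\mathrm{ECH}}(L(3,1),\lambda) \leq \inf_{\gamma \in \mathcal{S}_3,\,\mu_{\mathrm{disk}}(\gamma^3)=3} \int_\gamma \lambda$, I would take any $\gamma$ in the index set, noting that by Corollary \ref{main;cor} such $\gamma$ is elliptic and by Theorem \ref{fundament} has $sl_\xi^{\mathbb{Q}}(\gamma) = -1/3$ and $\gamma^3$ binds a disk Birkhoff section which is a page of a rational open book. I would then compute the ECH index of the orbit set $\gamma^3$ in the trivial homology class using the disk trivialization $\tau_{\mathrm{disk}}$: writing $\theta$ for the rotation number of $\gamma$, the constraint $\mu_{\tau_{\mathrm{disk}}}(\gamma^3) = 3$ pins $\theta \in (1/3, 2/3)$ and hence determines $\mu_{\tau_{\mathrm{disk}}}(\gamma^k)$ for $k \in \{1,2,3\}$. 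Combining this with the relative Chern number $c_{\tau_{\mathrm{disk}}}$ and the relative intersection pairing $Q_{\tau_{\mathrm{disk}}}$, both read off from $sl_\xi^{\mathbb{Q}}(\gamma) = -1/3$, should yield $I(\gamma^3) = 2$. The family of finite-energy $J$-holomorphic planes in the symplectization associated to the rational open book with binding $\gamma$ then witnesses $U[\gamma^3] = [\emptyset]$ in ECH, so $c_1^{\mathrm{ECH}}(L(3,1),\lambda) \leq \int_{\gamma^3}\lambda = 3\int_\gamma \lambda$.

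For the lower bound I would pick an orbit set $\alpha$ of ECH index $2$ in the trivial homology class realizing $c_1^{\mathrm{ECH}}$, reducing the strictly convex case to the non-degenerate dynamically convex case via $C^0$-continuity of $c_1^{\mathrm{ECH}}$ and density of non-degenerate perturbations. The first step is to show $\alpha = \gamma^3$ for some simple orbit $\gamma$ generating $H_1(L(3,1)) = \mathbb{Z}/3$. Using dynamical convexity (every iterated contractible orbit has $\mu_{\mathrm{disk}} \geq 3$) together with the ECH index formula and parity constraints, I would enumerate all orbit sets with $I = 2$ in trivial homology and show that $\alpha = \gamma^3$ is the only possibility; this is the \emph{technical index computation} highlighted in the abstract, and it uses crucially that $p = 3$. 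The second step is to produce from the $U$-map witness a somewhere-injective, index-$2$ finite-energy $J$-holomorphic plane in the symplectization asymptotic to $\gamma^3$, and, by varying the base point, a family of such planes sweeping out $L(3,1) \setminus \gamma$. Intersection-theoretic analysis, combined with the topology of rational open book decompositions supporting $(L(3,1), \xi_{\mathrm{std}})$, should organize these planes into the pages of a rational open book with binding $\gamma$, each page a disk Birkhoff section. Theorem \ref{fundament} then places $\gamma$ in $\mathcal{S}_3$, and rerunning the index computation of the first step forces $\mu_{\mathrm{disk}}(\gamma^3) = 3$, giving $c_1^{\mathrm{ECH}}(L(3,1),\lambda) = 3\int_\gamma \lambda \geq 3\inf_{\gamma' \in \mathcal{S}_3,\,\mu_{\mathrm{disk}}(\gamma'^3)=3}\int_{\gamma'} \lambda$.

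The main obstacle is the lower bound. Enumerating orbit sets of ECH index $2$ under dynamical convexity on $L(3,1)$ is delicate because $H_1 = \mathbb{Z}/3$ admits several multiplicity and splitting patterns which must all be excluded, and the passage from a single $J$-holomorphic plane to a genuine rational open book of disk type requires careful SFT compactness, automatic transversality, and intersection analysis tailored to $L(3,1)$. The combinatorial tightness that makes $p = 3$ work — and likely fails for general $p$ — is what specializes the theorem to $L(3,1)$ and will be the most subtle piece of the argument.
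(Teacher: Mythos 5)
Your overall strategy coincides with the paper's: the upper bound comes from showing that $\alpha_\gamma=(\gamma,3)$ represents a nonzero class in $\mathrm{ECH}_2$ of action $3\int_\gamma\lambda$, and the lower bound from classifying the ECH generators detected by the $U$-map and recognizing the associated holomorphic curves as pages of a rational open book. But two of your steps would fail as stated. In the lower bound, the claim that the enumeration shows ``$\alpha=\gamma^3$ is the only possibility'' is false: the paper's Proposition \ref{prp;main} shows that \emph{three} types survive, namely $(\gamma,3)$, $(\gamma_1,1)\cup(\gamma_2,1)\cup(\gamma_3,1)$ and $(\gamma_1,2)\cup(\gamma_2,1)$, each orbit lying in $\mathcal{S}_3$ with $\mu_{\disk}(\gamma_i^3)=3$; the argument is saved not by excluding the latter two but by taking the minimum-period orbit in $\alpha$, so that $\int_\gamma\lambda\le\frac{1}{3}A(\alpha)$. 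Moreover the enumeration cannot be done with ``the ECH index formula and parity constraints'' alone: the paper needs $I-J_0$ to control the topological complexity of the $U$-curve (Lemma \ref{lem:indechcomp}), the genus-zero statement of Proposition \ref{prp:immersed}, the partition conditions, the classification of which rational open books support $(L(3,1),\xi_{\mathrm{std}})$ (Lemmas \ref{lem:ex1}, \ref{lem:ex2}, \ref{lem;(1)}), and a separate cabling/self-linking computation via Lemma \ref{lem;key} to kill the case $\mu_{\disk}(\gamma_1^6)=\mu_{\disk}(\gamma_2^3)=5$, which survives all the purely index-theoretic constraints.

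In the upper bound, exhibiting the family of planes so that $\langle U_{J,z}\alpha_\gamma,\emptyset\rangle\neq0$ does not yet give $c_1^{\mathrm{ECH}}\le 3\int_\gamma\lambda$: you must also prove that $\alpha_\gamma$ is a cycle and that $\langle\alpha_\gamma\rangle\neq 0$ in homology. The first is Lemma \ref{lem:boundvanish}, resting on the intersection-positivity/winding argument of Lemma \ref{lem:boundaryop} (no somewhere injective curve with positive end at $(\gamma,3)$ and a negative end exists), and the second is the counting argument over the set $\mathcal{G}$ in Lemma \ref{nonzero}. Your index computation is also not well posed: since $\gamma$ generates $H_1(L(3,1))$, the trivialization $\tau_{\disk}$ exists only over $\gamma^3$ and is \emph{not} induced by any trivialization of $\gamma^*\xi$ (the winding discrepancy $2-p=-1$ is not divisible by $3$), so ``$\mu_{\tau_{\disk}}(\gamma^k)$ for $k=1,2$'' and the rotation number $\theta\in(1/3,2/3)$ are undefined; one must use a Martinet-tube trivialization over the simple orbit and transfer to $\tau_{\disk}$ through the $(3,2)$-cabling and $sl_\xi^{\QQ}(\gamma)=-\tfrac{1}{3}$, as in Lemmas \ref{lem;key} and \ref{lem:ECHgen}. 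Finally, the strictly convex case is not a formal $C^0$-continuity limit: the inequality $\tfrac{1}{3}c_1^{\mathrm{ECH}}\le\inf$ does not pass to the limit for free because the infimum is not manifestly continuous under perturbation, and the paper's Proposition \ref{upperbound} requires a separate contradiction argument.
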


Note that our first result Theorem \ref{thm:main} plays an important role in the proof of  the above.

\subsection{Idea and outline of this paper}

At first, we prove Therorem \ref{thm:main}. The idea is as follows. Let $\gamma$ be a unkonotted orbit in a lens space. Take a tubular neighborhood, then we have a Heegaard splitting of genus 1. Consider the twist of the gluing map and compare the trivializations of the contact structure over the solid torus and a binding disk. 
By combining them with the properties of Conley-Zehnder index, we have Therorem \ref{thm:main}. The observation of the proof is also  essential in the following proof of  Theorem \ref{maintheorem}.

Next, we consider Theorem \ref{maintheorem}.

The estimate 
$ \inf_{\gamma\in \mathcal{S}_{3},\,\,\mu_{\mathrm{disk}}(\gamma^3)=3}\,\,\int_{\gamma}\lambda \leq  \frac{1}{3}c_{1}^{\mathrm{ECH}}(L(3,1)\lambda)$ is a hard part of the proof. At first, we conduct technical computations regarding indices present in ECH to clear which  holomorphic curves  appear as $U$-map to the empty set. As a result, it follows that any moduli space of holomorphic curves counted by $U$-map gives a structure of rational open book decomposition (Lemma \ref{lem:indechcomp} and Lemma \ref{lem:openbook}). Next, I consider the rational open book decompositions. 
By considering which actually supports $(L(3,1),\xi_{\mathrm{std}})$, we can narrow down the list of holomorphic curves. Then it turn out that any binding of the rational open book decomposition coming from the $U$-map $\gamma$ is in $\mathcal{S}_{3}$ and $\mu_{\mathrm{disk}}(\gamma^{3})=3$ and in addition we can choose it so that $\int_{\gamma}\lambda \leq \frac{1}{3} c_{1}^{\mathrm{ECH}}(L(3,1),\lambda)$ (Proposition \ref{prp;main}). In this manner, we have the theorem.

The estimate $\frac{1}{3}c_{1}^{\mathrm{ECH}}(L(3,1)\lambda)\leq \inf_{\gamma\in \mathcal{S}_{3},\,\,\mu_{\mathrm{disk}}(\gamma^3)=3}\,\,\int_{\gamma}\lambda$ is almost the same with \cite{Shi} except that we need to use  Theorem \ref{thm:main} and Theorem \ref{fundament} (i.e. a periodic orbit $\gamma$ in $\mathcal{S}_{3}$   must be elliptic if  $\mu_{\mathrm{disk}}(\gamma^{p})=3$).

\subsection*{Acknowledgement}
The author would like to thank his advisor Professor Kaoru Ono for his encouragement and support, Umberto Hryniewicz for conversation via e-mail and  A. Schneider for some comments. The author also would like to thank Takahiro Oba for sharing his knowledge of  contact topology.
This work was supported by JSPS KAKENHI Grant Number JP21J20300.

\section{Preliminaries}
\subsection{Conley-Zehnder index}
In this subsection, we recall Conley-Zehnder index and its properties introduced in \cite{HWZ2}. The contents in this subsection are based on \cite{HWZ2,HWZ4} and almost the same with what are summarized in \cite{Shi}.

\begin{dfn}
 For  a  smooth path $\varphi:\mathbb{R}\to Sp(1)$ in 2-dimensional symplectic matrices with $\varphi(0)=id$ and $\varphi(t+1)=\varphi(t)\varphi(1)$ for any $t\in \mathbb{R}$, the Conley-Zehnder index  $\mu_{CZ}(\varphi)\in \ZZ$ is defined. In particular, $ \mu_{CZ}$ is lower semi-continuous.
\end{dfn}
For $k\in \mathbb{Z}_{>0}$, define $\rho_{k}:\mathbb{R} \to \mathbb{R}$ as $t \mapsto kt$. The next proposition holds.
\begin{prp}\label{conleycovering}
    Consider a smooth path $\varphi:\mathbb{R}\to Sp(1)$ in  symplectic matrices with $\varphi(0)=id$ and $\varphi(t+1)=\varphi(t)\varphi(1)$ for any $t\in \mathbb{R}$.
    \item[(1).] If $\mu_{CZ}(\varphi)=2n$ for $n\in \mathbb{Z}$, then $\mu_{CZ}(\varphi\circ \rho_{k})=2kn$ for every $k\in \mathbb{Z}_{>0}$.
       \item[(2).] If $\mu_{CZ}(\varphi)\geq3$, then $\mu_{CZ}(\varphi\circ \rho_{k})\geq 2k+1$ for every $k\in \mathbb{Z}_{>0}$.
\end{prp}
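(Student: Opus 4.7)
My plan is to reduce both statements to the standard rotation number description of $\mu_{CZ}$ for paths in $Sp(1)\cong SL(2,\mathbb{R})$. Starting from $\varphi$ as in the statement, lifting the angular coordinate of $\varphi(t)v$ continuously in $t$ yields a rotation number $\rho(\varphi)\in\mathbb{R}$ (cf.\ \cite{HWZ2}) that is continuous in $\varphi$ and multiplicative under covering in the sense that $\rho(\varphi\circ\rho_k)=k\,\rho(\varphi)$. The key input I would invoke is the classical formula for non-degenerate $\varphi(1)$: positive hyperbolic gives $\rho\in\mathbb{Z}$ and $\mu_{CZ}=2\rho$, negative hyperbolic gives $\rho\in\mathbb{Z}+\tfrac12$ and $\mu_{CZ}=2\rho$, and elliptic gives $\rho\notin\tfrac12\mathbb{Z}$ and $\mu_{CZ}=2\lfloor\rho\rfloor+1$; degenerate endpoints receive their value by lower semi-continuity from the non-degenerate side.

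For (1), the first observation is that $\mu_{CZ}(\varphi)=2n$ is even, which by the formula forces $\varphi(1)$ to be non-degenerate positive hyperbolic with $\rho(\varphi)=n$; every other case, including all degenerate endpoints (where lower semi-continuity always selects an odd value from nearby non-degenerate paths), gives an odd index. Then the iterate $\varphi\circ\rho_k$ ends at $\varphi(1)^k$, which is again positive hyperbolic with rotation number $kn$, and the formula immediately yields $\mu_{CZ}(\varphi\circ\rho_k)=2kn$.

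For (2), I would run a case analysis on the type of $\varphi(1)$ and apply the multiplicativity of $\rho$ throughout. The positive hyperbolic case reduces to (1) with $n\geq 2$, producing $2kn\geq 2k+1$. The negative hyperbolic case has $\rho(\varphi)=n+\tfrac12$ with $n\geq 1$, and yields $\mu_{CZ}(\varphi\circ\rho_k)=2kn+k\geq 2k+1$ regardless of the parity of $k$. The elliptic case has $\rho(\varphi)=\alpha>1$, so $\lfloor k\alpha\rfloor\geq k$ gives $\mu_{CZ}(\varphi\circ\rho_k)\geq 2k+1$ at non-degenerate iterates, while the special values $k\alpha\in\mathbb{Z}$ are handled by the lower semi-continuous extension from nearby elliptic perturbations, giving $2k\alpha-1\geq 2k+1$.

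I expect the main technical difficulty to be the bookkeeping around degeneracies: both $\varphi(1)$ itself and the iterated endpoint $\varphi(1)^k$ may be degenerate with eigenvalue $\pm 1$, and the parity of $k$ exchanges positive and negative hyperbolic types in the negative hyperbolic case. In each such subcase one must identify the correct side from which lower semi-continuity selects the value of $\mu_{CZ}$, and verify that the corresponding odd integer still satisfies the required inequality. Once the dictionary between $\mu_{CZ}$ and $\rho$ is in place, however, each remaining case boils down to a one-line arithmetic check.
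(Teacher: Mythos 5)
The paper does not actually prove this proposition---it is quoted as a known property of the Conley--Zehnder index with a citation to \cite{HWZ2,HWZ4}---so your proposal has to be measured against the standard argument rather than against anything in the text. Your reduction to the rotation number $\rho$ and the dictionary $\mu_{CZ}=2\rho$ (hyperbolic) versus $\mu_{CZ}=2\lfloor\rho\rfloor+1$ (elliptic) is the right framework, and your handling of all the non-degenerate cases in both (1) and (2) is correct.

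The genuine gap is the assertion, which is load-bearing for (1), that a degenerate endpoint always receives an odd index from lower semi-continuity. This is false. If $\varphi(1)$ is a non-trivial shear, conjugate to $\left(\begin{smallmatrix}1&c\\0&1\end{smallmatrix}\right)$ with $c\neq0$, and the path has rotation number $\rho\in\ZZ$, then the nearby non-degenerate paths realize only two of the three values $2\rho-1,\ 2\rho,\ 2\rho+1$: the sign of the off-diagonal entry of a nearby elliptic matrix determines its rotation direction, so only one elliptic side is accessible, and the accessible set is $\{2\rho-1,2\rho\}$ for one sign of $c$ and $\{2\rho,2\rho+1\}$ for the other. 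In the second case the (lower semi-continuous) index of the degenerate path is the \emph{even} integer $2\rho$; equivalently, in the spectral description $\mu=2\alpha+p$ one has $p=0$ exactly when the zero eigenvalue of the asymptotic operator is the upper member of its winding pair. Hence ``$\mu_{CZ}(\varphi)$ even'' does not force $\varphi(1)$ to be non-degenerate positive hyperbolic, and your proof of (1) omits a case. (The statement itself survives: the sign of $c$ is preserved under $M\mapsto M^{k}$, so the iterate is again degenerate of the same type with rotation number $k\rho$ and index $2k\rho$; this has to be said explicitly.) Two smaller points in (2): lower semi-continuity only yields \emph{upper} bounds at a degenerate limit, so to get the lower bound $\mu_{CZ}(\varphi\circ\rho_{k})\geq 2k\alpha-1$ when $k\alpha\in\ZZ$ you must actually pin down the degenerate value (it is $2k\alpha-1$ or $2k\alpha$ by the description above) rather than merely invoke semi-continuity; and the subcase where $\varphi$ itself is degenerate with $\mu_{CZ}(\varphi)\geq3$ is flagged but never executed---it does close, since $\mu_{CZ}(\varphi)\in\{2\rho-1,2\rho\}$ forces $\rho\geq2$ and then $\mu_{CZ}(\varphi\circ\rho_{k})\geq2k\rho-1\geq2k+1$, but it needs to be written out.
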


Consider a periodic orbit $\gamma:\mathbb{R}/T_{\gamma}\mathbb{Z}\to Y$ of $(Y,\lambda)$ and  a symplectic trivialization $\tau:\gamma^{*}\xi \to \mathbb{R}/T_{\gamma}\mathbb{Z} \times \mathbb{C}$. Then we have a symplectic path $ \mathbb{R}\ni t\mapsto \phi_{\gamma,\tau}(t):=\tau(\gamma(T_{\gamma}t)) \circ d\phi^{tT_{\gamma}}|_{\xi}\circ \tau^{-1}(\gamma(0))$ which satisfies $\phi_{\gamma,\tau}(t+1)=\phi_{\gamma,\tau}(t)\phi_{\gamma,\tau}(1) $ for any $t\in \mathbb{R}$.

Now, we define the Conley-Zender index of $\gamma$ with respect to a trivialization $\tau$ as
\begin{equation}
    \mu_{\tau}(\gamma):=\mu_{CZ}(\phi_{\gamma,\tau}).
\end{equation}
Note that  $ \mu_{\tau}$ is independent of the choice of a trivialization in the same homotopy class of $\tau$.
 Let $\mathcal{P}(\gamma)$ denote  the set of homotopy classes of symplectic trivializations  $\gamma^{*}\xi\to \RR/T_{\gamma}\ZZ \times \RR^2$.  Note that  a  symplectic trivialization $\tau \in \mathcal{P}(\gamma)$ induce naturally a symplectic trivialization on $(\gamma^{p})^{*}\xi$ and we use the same notation $\tau\in \mathcal{P}(\gamma^{p})$ for the induced  trivialization on $(\gamma^{p})^{*}\xi$ if there is no confusion.

For each $\tau \in \mathcal{P}(\gamma)$, take  sections $Z_{\tau},W_{\tau}:\mathbb{R}/T_{\gamma}\mathbb{Z}\to \gamma^{*}\xi$ so that the map $\gamma^{*}\xi \ni aZ_{\tau}+bW_{\tau}\mapsto a+ib\in \mathbb{C}$  gives a  symplectic trivialization of homotopy class $\tau$.  For $\tau,\tau'\in \mathcal{P}(\gamma)$, we define $\mathrm{wind}(\tau,\tau')\in \mathbb{Z}$ as follows.
Let $a_{\tau,\tau'},b_{\tau,\tau'}:\mathbb{R}/T_{\gamma}\mathbb{Z}\to \mathbb{R}$ be continuous functions  such that $Z_{\tau}(t)=a_{\tau,\tau'}(t)Z_{\tau'}(t)+b_{\tau,\tau'}(t)W_{\tau'}(t)$ for $t\in \mathbb{R}/T_{\gamma}\mathbb{Z}$. Let $\theta:[0,T_{\gamma}]\to \mathbb{R}$ be a continuous function so that $a_{\tau,\tau'}(t)+ib_{\tau,\tau'}(t)\in \mathbb{R}_{+}e^{i\theta(t)}$ for $t\in [0,T_{\gamma}]$. Then we define 
\begin{equation}
    \mathrm{wind}(\tau,\tau'):=\frac{\theta(T_{\gamma})-\theta(0)}{2\pi}\in \mathbb{Z}.
\end{equation}
It is obvious that $\mathrm{wind}(\tau,\tau')$ is independent of the choices $Z_{\tau^{(')}},W_{\tau^{(')}}$.
The following property is well-known and important. 
\begin{prp}\label{conleywind}
      Let $\gamma$ be a  periodic orbit in $(Y,\lambda)$. For $\tau\in \mathcal{P}(\gamma)$. Then for any $\tau,\tau' \in \mathcal{P}(\gamma)$, $\mu_{\tau}(\gamma)+2\mathrm{wind}(\tau,\tau')=\mu_{\tau'}(\gamma)$.
\end{prp}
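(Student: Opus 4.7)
The plan is to relate the two symplectic paths $\phi_{\gamma,\tau}$ and $\phi_{\gamma,\tau'}$ by a conjugation identity coming from the loop of frame changes, and then invoke the standard twist property of the Conley--Zehnder index.

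First, I introduce the frame-change loop $A:\mathbb{R}/\mathbb{Z}\to Sp(1)$ defined by $A(t):=\tau'(\gamma(T_\gamma t))\circ \tau(\gamma(T_\gamma t))^{-1}$; this is genuinely a loop because $\gamma$ is $T_\gamma$-periodic. Unwinding the definition of $\phi_{\gamma,\tau}$ immediately yields
\begin{equation*}
\phi_{\gamma,\tau'}(t) \;=\; A(t)\,\phi_{\gamma,\tau}(t)\,A(0)^{-1},\qquad t\in\mathbb{R}.
\end{equation*}
This conjugation identity governs the whole argument.

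Second, I identify the homotopy class of $A$ in $\pi_{1}(Sp(1))\cong\mathbb{Z}$ with $\mathrm{wind}(\tau,\tau')$. Since $Sp(1)$ deformation retracts onto $U(1)$, this class is detected by the winding number of any column of $A(t)$ regarded as a curve in $\mathbb{C}^{*}$. By construction $A(t)(1,0)$ equals $a_{\tau,\tau'}(T_\gamma t)+i\,b_{\tau,\tau'}(T_\gamma t)$ under $\mathbb{R}^{2}\cong\mathbb{C}$, so its angular winding as $t$ runs through $[0,1]$ is exactly the integer $\mathrm{wind}(\tau,\tau')$ from the definition.

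Third, I invoke the twist formula for the Conley--Zehnder index: for any symplectic path $\varphi$ with $\varphi(t+1)=\varphi(t)\varphi(1)$ and any loop $B:\mathbb{R}/\mathbb{Z}\to Sp(1)$ of winding $k$,
\begin{equation*}
\mu_{CZ}\bigl(t\mapsto B(t)\,\varphi(t)\,B(0)^{-1}\bigr) \;=\; \mu_{CZ}(\varphi) + 2k.
\end{equation*}
Applied to $\varphi=\phi_{\gamma,\tau}$ and $B=A$, this yields the desired equality $\mu_{\tau'}(\gamma)=\mu_{\tau}(\gamma)+2\,\mathrm{wind}(\tau,\tau')$.

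The substantive step is the twist formula itself. I expect to prove it by homotoping $A$ inside $Sp(1)$ to the model unitary loop $t\mapsto e^{2\pi i k t}\in U(1)\subset Sp(1)$ of the same winding; along such a homotopy neither side of the equation changes (the right-hand side is topological, while on the left-hand side the deformation does not force eigenvalue crossings that would alter $\mu_{CZ}$). The remaining model computation---that conjugation by a full $U(1)$-twist of degree $k$ adds $k$ to the rotation number and hence $2k$ to the index---is the classical axiomatic property of $\mu_{CZ}$ and can be quoted directly from \cite{HWZ2}. The only real care is in matching orientation and normalization conventions, so I do not expect a genuine obstacle here.
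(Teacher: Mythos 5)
Your proposal is correct. Note that the paper itself gives no proof of this proposition --- it is stated as ``well-known'' with an implicit reference to \cite{HWZ2} --- so there is no in-paper argument to compare against; your derivation (the conjugation identity $\phi_{\gamma,\tau'}(t)=A(t)\phi_{\gamma,\tau}(t)A(0)^{-1}$, the identification of the winding of $A$ with $\mathrm{wind}(\tau,\tau')$ via the first column, and the loop/twist property of $\mu_{CZ}$) is exactly the standard one. One small point worth making explicit in the last step: since $A_s$ remains a loop throughout the homotopy, the endpoint $A_s(1)\varphi(1)A_s(0)^{-1}=A_s(0)\varphi(1)A_s(0)^{-1}$ is always conjugate to $\varphi(1)$, so its spectrum is constant and no eigenvalue crossing of $1$ can occur --- this is the precise reason your parenthetical claim about invariance of $\mu_{CZ}$ along the deformation holds (and it is compatible with the lower semi-continuous extension of $\mu_{CZ}$ used in the degenerate case).
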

If $\gamma^{n}$ is non-degenerate for every $n \in \mathbb{Z}_{>0}$, it is also well-known that the Conley-Zehnder index behave as follows. 
\begin{prp}\label{conleybasic}
    Let $\gamma$ be a orbit such that $\gamma^{n}$ is non-degenerate for every $n\in \mathbb{Z}_{>0}$.  Fix a trivialization $\tau$ of the contact plane over $\gamma$. Consider the Conley-Zehnder indices of the multiple covers with respect to $\tau$. Write $
    \mu_{\tau}(\gamma^{n}):=\mu_{CZ}(\phi_{\gamma,\tau}\circ \rho_{n})$.
    \item[(1).] If $\gamma$ is hyperbolic, $\mu_{\tau}(\gamma^{n})=n\mu_{\tau}(\gamma)$ for every $n\in \mathbb{Z}_{>0}$.
    \item[(2).] If $\gamma$ is elliptic, there is $\theta \in \mathbb{R}\backslash \mathbb{Q}$ such that $\mu_{\tau}(\gamma^{n})=2 \lfloor n\theta \rfloor +1$ for every $n \in \mathbb{Z}_{>0}$.

    We call $\theta$ the monodromy angle of $\gamma$.
\end{prp}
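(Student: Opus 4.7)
The plan is to reduce the symplectic path $\phi_{\gamma,\tau}$ to a canonical model via a homotopy through cocycle paths with fixed endpoint at $t=1$, compute $\mu_{CZ}$ on the model, and then transport the computation to iterates. The key observation is that such a homotopy commutes with the iteration operation $(\cdot)\circ\rho_n$: if $\phi_{\gamma,\tau}\simeq \psi$ through cocycle paths with fixed endpoint, then $\phi_{\gamma,\tau}\circ\rho_n\simeq \psi\circ\rho_n$ as well. By homotopy invariance of $\mu_{CZ}$, it therefore suffices to carry out both computations on the model.

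For the hyperbolic case, the return map $\phi_{\gamma,\tau}(1)$ is symplectically conjugate to $\mathrm{diag}(\lambda,\lambda^{-1})$ in the positive hyperbolic case, or to $-\mathrm{diag}(\lambda,\lambda^{-1})$ in the negative hyperbolic case, for some $\lambda>0$ with $\lambda\neq 1$. I homotope $\phi_{\gamma,\tau}$ through cocycle paths to a model of the form
\[ \psi(t) \;=\; R_{2\pi\eta t}\cdot\mathrm{diag}(\lambda^{t},\lambda^{-t}), \]
with $\eta\in\ZZ$ in the positive hyperbolic case and $\eta\in\ZZ+\tfrac12$ in the negative hyperbolic case; the precise value of $\eta$ is pinned down by the homotopy class of $\phi_{\gamma,\tau}$, i.e.\ by $\tau$. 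A direct computation on $\psi$ -- e.g.\ by tracking the winding of an eigendirection of $\mathrm{diag}(\lambda^t,\lambda^{-t})$ after being rotated by $R_{2\pi\eta t}$ -- gives $\mu_{CZ}(\psi) = 2\eta$. The iterate $\psi\circ\rho_n$ is of the same form with parameter $n\eta$, so $\mu_{CZ}(\psi\circ\rho_n) = 2n\eta = n\,\mu_{CZ}(\psi)$, which proves (1).

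For the elliptic case, $\phi_{\gamma,\tau}(1)$ is symplectically conjugate to a rotation $R_{2\pi\alpha}$ for some $\alpha\in\RR$. Non-degeneracy of every iterate $\gamma^n$ forces $\alpha\notin\QQ$, since $\alpha=p/q$ in lowest terms would give $(R_{2\pi\alpha})^q = \Id$, contradicting non-degeneracy of $\gamma^q$. I homotope $\phi_{\gamma,\tau}$ through cocycle paths to $t\mapsto R_{2\pi\theta t}$ for some lift $\theta\in\RR\setminus\QQ$ of $\alpha$ whose integer part is pinned down by $\tau$. Iteration gives $t\mapsto R_{2\pi n\theta t}$, and the standard formula $\mu_{CZ}(t\mapsto R_{2\pi\beta t}) = 2\lfloor\beta\rfloor + 1$ (valid for $\beta\notin\ZZ$) yields (2).

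The main technical point is to justify the reduction to canonical form through cocycle paths and to correctly identify the rotation parameter ($\eta$ or $\theta$) with the homotopy class $\tau$. Both are standard in the Conley--Zehnder theory for paths in $Sp(1)$ (see \cite{HWZ2,HWZ4}), and once the reduction is granted the actual calculations on the model paths are routine and reduce to counting windings; I would simply cite the relevant lemmas from \cite{HWZ2,HWZ4} to shorten the exposition.
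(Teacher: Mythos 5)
Your proposal is correct: the paper itself gives no proof of this proposition, stating it as well-known and deferring to \cite{HWZ2,HWZ4}, and your normal-form argument (homotope the cocycle path rel its endpoint to $R_{2\pi\eta t}\,\mathrm{diag}(\lambda^{t},\lambda^{-t})$ resp.\ $R_{2\pi\theta t}$, note that such a homotopy induces one on every iterate, and compute on the model) is exactly the standard argument underlying those references. The one point worth stating explicitly if you write this out is that the central factor $R_{2\pi\eta}=\pm\mathrm{Id}$ is what makes the hyperbolic model a genuine cocycle path, and that non-degeneracy of all iterates is what forces $\theta\notin\QQ$ in the elliptic case --- both of which you have correctly identified.
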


For more properties of the Conley-Zehnder index, see \cite{HWZ2,HWZ4}.

\subsection{The construction and properties of ECH}\label{echdef}
Here, we list the basic construction and properties of ECH. The content of this subsection is based on \cite{H1}, \cite{H2}, \cite{H3} almost the same with what are summarized in \cite{Shi}.
 
Let $(Y,\lambda)$ be a non-degenerate contact three manifold. For $\Gamma \in H_{1}(Y;\mathbb{Z})$, Embedded contact homology $\mathrm{ECH}(Y,\lambda,\Gamma)$ is defined. At first, we define the chain complex $(\mathrm{ECC}(Y,\lambda,\Gamma),\partial)$. In this paper, we consider ECH over $\mathbb{Z}/2\mathbb{Z}=\mathbb{F}$.

\begin{dfn} [{\cite[Definition 1.1]{H1}}]\label{qdef}
An orbit set $\alpha=\{(\alpha_{i},m_{i})\}$ is a finite pair of distinct simple periodic orbit $\alpha_{i}$ with positive integer $m_{i}$. $\alpha=\{(\alpha_{i},m_{i})\}$ is called an ECH generator If $m_{i}=1$ whenever $\alpha_{i}$ is hyperbolic orbit.
\end{dfn}
Define $[\alpha]=\sum m_{i}[\alpha_{i}] \in H_{1}(Y)$. For two orbit sets $\alpha=\{(\alpha_{i},m_{i})\}$ and $\beta=\{(\beta_{j},n_{j})\}$ with $[\alpha]=[\beta]$, we define  $H_{2}(Y,\alpha,\beta)$ to be the set of relative homology classes of 2-chains $Z$ in $Y$ with $\partial Z =\sum_{i}m_{i}\alpha_{i}-\sum_{j}m_{j}\beta_{j}$. This is an affine space over $H_{2}(Y)$. 

From now on, we fix a trivialization $\tau_{\gamma} \in \mathcal{P}(\gamma)$ of the contact surface $\xi$ on each simple orbit $\gamma$.  Let $\tau:=\{ \tau_{\gamma} \}_{\gamma}$.
\begin{dfn}[{\cite[Definition 1.5]{H1}}]
For $Z\in H_{2}(Y,\alpha,\beta)$, we define
\begin{equation}
    I(\alpha,\beta,Z):=c_{1}(\xi|_{Z},\tau)+Q_{\tau}(Z)+\sum_{i}\sum_{k=1}^{m_{i}}\mu_{\tau}(\alpha_{i}^{k})-\sum_{j}\sum_{k=1}^{n_{j}}\mu_{\tau}(\beta_{j}^{k}).
\end{equation}
We call $I(\alpha,\beta,Z)$ an ECH index. Here,  $\mu_{\tau}$ is the Conely Zhender index with respect to $\tau$ and $c_{1}(\xi|_{Z},\tau)$ is a reative Chern number  and $Q_{\tau}(Z)=Q_{\tau}(Z,Z)$. Moreover this is independent of $\tau$ (see  \cite{H1} for more details).
\end{dfn}

For $\Gamma \in H_{1}(Y)$, we define 
\begin{equation*}
    \mathrm{ECC}(Y,\lambda,\Gamma):= \bigoplus_{\alpha:\mathrm{ECH\,\,generator\,\,with\,\,}{[\alpha]=\Gamma}}\mathbb{F}\cdot \alpha.
\end{equation*}
The right hand side is a freely generated module over
$\mathbb{F}$ by ECH generators $\alpha$ such that $[\alpha]=\Gamma$.
To define the differential $\partial:\mathrm{ECC}(Y,\lambda,\Gamma)\to \mathrm{ECC}(Y,\lambda,\Gamma) $, we fix a generic  almost complex structure $J$  on $\mathbb{R}\times Y$ which satisfies $\mathbb{R}$-invariant, $J(\frac{d}{ds})=X_{\lambda}$, $J\xi=\xi$ and $d\lambda(\cdot,J\cdot)>0$. We call such a almost complex structure $J$ admissible.

We consider $J$-holomorphic curves  $u:(\Sigma,j)\to (\mathbb{R}\times Y,J)$ where
the domain $(\Sigma, j)$ is a punctured compact Riemann surface. Here the domain $\Sigma$ is
not necessarily connected.  Let $\gamma$ be a (not necessarily simple) Reeb orbit.  If a puncture
of $u$ is asymptotic to $\mathbb{R}\times \gamma$ as $s\to \infty$, we call it a positive end of $u$ at $\gamma$ and if a puncture of $u$ is asymptotic to $\mathbb{R}\times \gamma$ as $s\to -\infty$, we call it a negative end of $u$ at $\gamma$ ( see \cite{H1} for more details ).

 Let $\alpha=\{(\alpha_{i},m_{i})\}$ and $\beta=\{(\beta_{i},n_{i})\}$ be orbit sets. Let $\mathcal{M}^{J}(\alpha,\beta)$ denote the set of  $J$-holomorphic curves with positive ends
at covers of $\alpha_{i}$ with total covering multiplicity $m_{i}$, negative ends at covers of $\beta_{j}$
with total covering multiplicity $n_{j}$, and no other punctures. Moreover, in $\mathcal{M}^{J}(\alpha,\beta)$, we consider two
$J$-holomorphic curves   to be equivalent if they represent the same current in $\mathbb{R}\times Y$. We sometimes consider an element in $\mathcal{M}^{J}(\alpha,\beta)$ as the image in $\mathbb{R}\times Y$. For $u \in \mathcal{M}^{J}(\alpha,\beta)$, we naturally have $[u]\in H_{2}(Y;\alpha,\beta)$ and set $I(u)=I(\alpha,\beta,[u])$. Moreover we define
\begin{equation}
     \mathcal{M}_{k}^{J}(\alpha,\beta):=\{\,u\in  \mathcal{M}^{J}(\alpha,\beta)\,|\,I(u)=k\,\,\}
\end{equation}

Under this notations, we define $\partial_{J}:\mathrm{ECC}(Y,\lambda,\Gamma)\to \mathrm{ECC}(Y,\lambda,\Gamma)$ as 
\begin{equation}
    \partial_{J} \alpha =\sum_{\beta:\mathrm{\,\,ECH\,\,generator\,\,with\,\,}[\beta]=\Gamma} \# (\mathcal{M}_{1}^{J}(\alpha,\beta)/\mathbb{R})\cdot \beta.
\end{equation}
Note that the above counting is well-defined and $\partial_{J} \circ \partial_{J}=0$ (see \cite{H1,HT1,HT2}, Proposition \ref{indexproperties} ). Moreover, the homology defined by $\partial_{J}$ does not depend on $J$ and if $\mathrm{Ker}\lambda=\mathrm{Ker}\lambda'$ for non-degenerate $\lambda,\lambda'$, there is a natural isomorphism between $\mathrm{ECH}(Y,\lambda,\Gamma)$ and $\mathrm{ECH}(Y,\lambda',\Gamma)$.  Indeed, It is proved in \cite{T1} that there is a natural isomorphism between  ECH and a version of Monopole Floer homology defined in \cite{KM}.

Next, we recall (Fredholm) index.
For $u\in \mathcal{M}^{J}(\alpha,\beta)$, the  its (Fredholm) index is defined by
\begin{equation}
    \mathrm{ind}(u):=-\chi(u)+2c_{1}(\xi|_{[u]},\tau)+\sum_{k}\mu_{\tau}(\gamma_{k}^{+})-\sum_{l}\mu_{\tau}(\gamma_{l}^{-}).
\end{equation}
Here $\{\gamma_{k}^{+}\}$ is the set consisting of (not necessarilly simple) all positive ends of $u$ and $\{\gamma_{l}^{-}\}$ is that one of all negative ends.  Note that for generic $J$, if $u$ is connected and somewhere injective, then the moduli space of $J$-holomorphic
curves near $u$ is a manifold of dimension $\mathrm{ind}(u)$ (see \cite[Definition 1.3]{HT1}).

\subsubsection{$U$-map}
Let $Y$ be connected.
Then there is degree$-2$ map $U$.
\begin{equation*}\label{Umap}
    U:\mathrm{ECH}(Y,\lambda,\Gamma) \to \mathrm{ECH}(Y,\lambda,\Gamma).
\end{equation*}

To define this, choose a base point $z\in Y$ which is especially  not on the image of any Reeb orbits and let $J$ be generic.
Then define a map 
\begin{equation*}
     U_{J,z}:\mathrm{ECC}(Y,\lambda,\Gamma) \to \mathrm{ECC}(Y,\lambda,\Gamma).
\end{equation*}
by
\begin{equation*}
    U_{J,z} \alpha =\sum_{\beta:\mathrm{\,\,ECH\,\,generator\,\,with\,\,}[\beta]=\Gamma} \# \{\,u\in \mathcal{M}_{2}^{J}(\alpha,\beta)/\mathbb{R})\,|\,(0,z)\in u\,\}\cdot  \beta.
\end{equation*}
The above map $U_{J,z}$ commute with $\partial_{J}$ and we can define the $U$ map
as the induced map on homology.  Moreover, this map is independent on $z$ (for a generic $J$). See \cite[{\S}2.5]{HT3} for more details. Moreover, in the same reason as $\partial$, $U_{J,z}$ does not depend of $J$ (see \cite{T1}).

\subsubsection{Partition conditions of elliptic orbits}\label{subsecparti}
For $\theta\in \mathbb{R}\backslash \mathbb{Q}$, we define $S_{\theta}$ to be the set of positive integers $q$ such that $\frac{\lceil q\theta \rceil}{q}< \frac{\lceil q'\theta \rceil}{q'}$ for all $q'\in \{1,\,\,2,...,\,\,q-1\}$ and write $S_{\theta}=\{q_{0}=1,\,\,q_{1},\,\,q_{2},\,\,q_{3},...\}$ in increasing order. Also $S_{-\theta}=\{p_{0}=1,\,\,p_{1},\,\,p_{2},\,\,p_{3},...\}$.

\begin{dfn}[{\cite[Definition 7.1]{HT1}}, or {\cite[{\S}4]{H1}}]\label{defpartition}
For non negative integer $M$, we inductively define the incoming partition $P_{\theta}^{\mathrm{in}}(M)$ as follows.

For $M=0$, $P_{\theta}^{\mathrm{in}}(0)=\emptyset$ and for $M>0$,
\begin{equation*}
    P_{\theta}^{\mathrm{in}}(M):=P_{\theta}^{\mathrm{in}}(M-a)\cup{(a)}
\end{equation*}

where $a:=\mathrm{max}(S_{\theta}\cap{\{1,\,\,2,...,\,\,M\}})$.
 Define outgoing partition
 \begin{equation*}
      P_{\theta}^{\mathrm{out}}(M):= P_{-\theta}^{\mathrm{in}}(M).
 \end{equation*}
 
The standard ordering convention for $P_{\theta}^{\mathrm{in}}(M)$ or $P_{\theta}^{\mathrm{out}}(M)$ is to list the entries
in``nonincreasing'' order.
\end{dfn}

 Let $\alpha=\{(\alpha_{i},m_{i})\}$ and $\beta=\{(\beta_{i},n_{i})\}$. For  $u\in \mathcal{M}^{J}(\alpha,\beta)$, it can be uniquely
written as $u=u_{0}\cup{u_{1}}$ where $u_{0}$ are unions of all components which maps to $\mathbb{R}$-invariant cylinders in $u$ and $u_{1}$ is the rest of $u$.

\begin{prp}[{\cite[Proposition 7.15]{HT1}}]\label{indexproperties}
Suppose that $J$ is generic and $u=u_{0}\cup{u_{1}}\in \mathcal{M}^{J}(\alpha,\beta)$. Then
    \item[(1).] $I(u)\geq 0$
    \item[(2).] If $I(u)=0$, then $u_{1}=\emptyset$
    \item[(3).] If $I(u)=1$, then  $\mathrm{ind}(u_{1})=1$. Moreover  $u_{1}$ is embedded and does not intersect $u_{0}$.
    \item[(4).] If $I(u)=2$ and $\alpha$ and $\beta$ are ECH generators,  then $\mathrm{ind}(u_{1})=2$ .  Moreover  $u_{1}$ is embedded and does not intersect $u_{0}$.
\end{prp}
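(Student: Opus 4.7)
The plan is to reduce everything to the Hutchings index inequality for somewhere-injective curves together with a combinatorial analysis of how the ECH and Fredholm indices behave under multiple covers. For any somewhere-injective $J$-holomorphic curve $C$ in $\RR\times Y$ the key input reads
\begin{equation*}
\mathrm{ind}(C) \;\leq\; I(C) - 2\delta(C),
\end{equation*}
where $\delta(C)\geq 0$ is a count of singularities with multiplicity, and equality holds exactly when the multiplicities of the positive (resp. negative) ends of $C$ at each underlying simple orbit form the admissible incoming partition $P_{\theta}^{\mathrm{in}}$ (resp. outgoing partition $P_{\theta}^{\mathrm{out}}$) recalled in Subsection \ref{subsecparti}. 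For generic $J$, somewhere-injective moduli spaces are cut out transversely, so $\mathrm{ind}(C)\geq 0$ whenever $C$ exists.

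First I would decompose $u=u_0\cup u_1$ as in the statement and further write $u_1=\sum_a d_a C_a$ with each $C_a$ somewhere-injective and $d_a\geq 1$. Since covers of trivial cylinders have vanishing ECH index and contribute to $I(u)$ only through a linking/intersection correction with $u_1$, the additivity properties of $I$ give
\begin{equation*}
I(u) \;=\; I(u_1) + 2\,(u_0\cdot u_1),
\end{equation*}
with $u_0\cdot u_1\geq 0$ by positivity of intersections. Applying the index inequality to each $C_a$ and combining with a nonnegative combinatorial correction that measures the subadditivity of $I$ under $C_a\mapsto d_a C_a$ yields $I(u_1)\geq \sum_a d_a\,\mathrm{ind}(C_a)\geq 0$, which is claim (1). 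For claim (2), assuming $I(u)=0$ forces every inequality to be saturated: all intersections vanish, each $\delta(C_a)=0$, every $\mathrm{ind}(C_a)=I(C_a)=0$, all partition conditions are extremal, and every multi-cover correction is zero. A short case analysis of when a $J$-holomorphic curve can achieve $\mathrm{ind}=I=0$ while being somewhere-injective forces $C_a$ to be a trivial cylinder, contradicting the definition of $u_1$; hence $u_1=\emptyset$.

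For claims (3) and (4) the same bookkeeping gives
\begin{equation*}
\mathrm{ind}(u_1) \;\leq\; I(u_1) - 2\delta(u_1) - \Delta \;\leq\; I(u) - 2(u_0\cdot u_1),
\end{equation*}
where $\Delta\geq 0$ is the multi-cover correction. With $I(u)\in\{1,2\}$ and $\mathrm{ind}(u_1)\geq 0$, parity and positivity force each nonnegative contribution to vanish: $u_1$ is somewhere-injective ($\Delta=0$, no $d_a\geq 2$), embedded ($\delta(u_1)=0$), disjoint from $u_0$ ($u_0\cdot u_1=0$), and $\mathrm{ind}(u_1)=I(u_1)=I(u)$. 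The main obstacle — and where the real work sits — is the precise estimate of the multi-cover correction, i.e.\ controlling $\mathrm{ind}(d_a C_a) - d_a\,\mathrm{ind}(C_a)$ against the corresponding defect in $I$ at each end. This requires the combinatorial properties of the admissible partitions $P^{\mathrm{in}}_\theta,P^{\mathrm{out}}_\theta$ at elliptic ends, together with the observation that a hyperbolic orbit cannot appear with multiplicity in an ECH generator; the latter is exactly where the extra hypothesis in (4) that $\alpha,\beta$ are ECH generators is used, since for $I(u)=2$ one has to rule out that $u_1$ is a double cover of a simple curve at a hyperbolic orbit.
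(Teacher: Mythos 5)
The paper gives no proof of this statement: it is quoted verbatim from Hutchings--Taubes \cite[Proposition 7.15]{HT1}, so there is no in-paper argument to compare your attempt against. Your outline faithfully reproduces the strategy of the cited proof --- the index inequality $\mathrm{ind}(C)\leq I(C)-2\delta(C)$ for somewhere injective curves, transversality for generic $J$ (which gives $\mathrm{ind}(C)\geq 0$, and $\mathrm{ind}(C)\geq 1$ unless $C$ is a trivial cylinder because of the free $\RR$-action, which is the mechanism forcing $u_{1}=\emptyset$ in (2)), positivity of intersections for the $u_{0}\cdot u_{1}$ term, and superadditivity of $I$ under unions and multiple covers, with the admissibility hypothesis in (4) used exactly to exclude the case that $u_{1}$ is a double cover of an index-one curve: by parity such a curve has an end of even Conley--Zehnder index, hence an end at a hyperbolic orbit, which would then appear in $\alpha$ or $\beta$ with total multiplicity at least two. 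The only slip is notational: relative to the conventions recalled in Proposition \ref{partitioncondition}, positive ends are governed by the \emph{outgoing} partitions $P^{\mathrm{out}}_{\theta}$ and negative ends by the \emph{incoming} ones, the reverse of what you wrote; this does not affect the logic.
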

\begin{prp}[{\cite[Proposition 7.14, 7.15]{HT1}}]\label{partitioncondition} 
Let $\alpha=\{(\alpha_{i},m_{i})\}$ and $\beta=\{(\beta_{j},n_{j})\}$ be ECH generators. Suppose that $I(u)=1$ or $2$ for $u=u_{0}\cup{u_{1}}\in \mathcal{M}^{J}(\alpha,\beta)$. Define $P_{\alpha_{i}}^{+}$ by the set consisting of the multiplicities of the positive ends of $u_{1}$ at covers of $\alpha_{i}$. In the same way,  define $P_{\beta_{j}}^{-}$ for the negative end.
Suppose that $\alpha_{i}$ in $\alpha$ (resp. $\beta_{j}$ in $\beta$) is elliptic orbit with the monodromy angle $\theta_{\alpha_{i}}$ (resp. $\theta_{\beta_{j}}$). Then under the standard ordering convention, $P_{\alpha_{i}}^{+}$ (resp. $P_{\beta_{j}}^{-}$) is an initial segment of $P_{\theta_{\alpha_{i}}}^{\mathrm{out}}(m_{i})$ (resp. $P_{\theta_{\beta_{j}}}^{\mathrm{in}}(n_{j})$).
\end{prp}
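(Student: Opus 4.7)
The plan is to deduce the partition condition from Hutchings' relative adjunction/index inequality for somewhere-injective $J$-holomorphic curves in symplectizations. The key tool is the inequality
\begin{equation*}
I(u_1) \;\geq\; \mathrm{ind}(u_1) + 2\delta(u_1) + 2\sum_{e\in \mathcal{E}^+\sqcup \mathcal{E}^-} D_\tau(e),
\end{equation*}
where $\delta(u_1)\geq 0$ counts singular points of $u_1$, and at each end $e$ the quantity $D_\tau(e)\geq 0$ measures the gap between the asymptotic writhe of that end (relative to $\tau$) and a combinatorial lower bound determined only by the underlying orbit and by the multiplicity partition at $e$. The strategy is to force every term on the right to vanish and then to translate the resulting saturation of the writhe bound into the claimed partition condition.

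For the first step, I would apply Proposition \ref{indexproperties} (3)--(4): since $I(u)=1$ or $2$, the nontrivial part $u_1$ is embedded, disjoint from $u_0$, and satisfies $\mathrm{ind}(u_1)=I(u_1)$. Embeddedness gives $\delta(u_1)=0$, and $I(u_1)=\mathrm{ind}(u_1)$ then forces $D_\tau(e)=0$ at every end of $u_1$. So at each elliptic orbit the partition of covering multiplicities of $u_1$ at its positive (resp. negative) ends attains the minimum admissible writhe.

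For the second step, near an elliptic orbit of monodromy angle $\theta$ the standard asymptotic analysis of pseudoholomorphic ends shows that an end of covering multiplicity $q$ braids around the orbit with winding number $\lfloor q\theta\rfloor$ at a positive puncture and $\lceil q\theta\rceil$ at a negative puncture, and that the linking between two such ends of multiplicities $q,q'$ at a negative puncture is $\min(q\lceil q'\theta\rceil,\, q'\lceil q\theta\rceil)$, with the analogous formula at a positive end. A combinatorial optimization then identifies the unique partition minimizing the total writhe over all positive ends of multiplicity $n_j$ with the entire sequence $P^{\mathrm{in}}_{\theta}(n_j)$, and dually $P^{\mathrm{out}}_{\theta}(m_i)$ at positive ends. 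Because $u_1$ need only account for the portion of $m_i$ (resp. $n_j$) not already carried by trivial cylinder covers in $u_0$, the partition coming from $u_1$ alone is forced to be an initial segment of the minimizer in the nonincreasing ordering. The main obstacle is precisely this combinatorial optimization: one must show that the greedy prescription underlying Definition \ref{defpartition}, which selects the largest $q\leq m_i$ with $\lceil q\theta\rceil/q$ minimal, produces the unique writhe-minimizing partition; the proof relies on the best-rational-approximation structure encoded in the set $S_\theta$.
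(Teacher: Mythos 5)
This proposition is not proved in the paper at all: it is imported verbatim from Hutchings--Taubes \cite{HT1} (Propositions 7.14 and 7.15), so there is no internal argument to measure your attempt against; what you have written is a sketch of the proof in the original reference. Your overall strategy is indeed the right one --- the index inequality of \cite{H1} together with the asymptotic winding bounds $\lfloor q\theta\rfloor$ at positive ends and $\lceil q\theta\rceil$ at negative ends --- and your first step is essentially sound: Proposition \ref{indexproperties} gives that $u_{1}$ is embedded with $\mathrm{ind}(u_{1})=I(u)$, which combined with $I(u_{1})\le I(u)$ and the index inequality forces $\delta(u_{1})=0$ and saturation of the writhe bounds. (Two smaller imprecisions: the writhe defect is naturally attached to each \emph{orbit}, not to each end, since it involves the mutual linking of distinct ends at the same orbit; and the displayed winding and linking formulas are one-sided bounds attained only in the extremal case, not identities.)

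The genuine gap is in your final step. Saturating the writhe bound for $u_{1}$ \emph{alone} identifies the multiplicities of its ends at $\alpha_{i}$ with the extremal partition of the total multiplicity actually carried by $u_{1}$, say $m_{i}'=m_{i}-(\text{multiplicity of the trivial cylinders over }\alpha_{i}\text{ in }u_{0})$, i.e.\ with $P^{\mathrm{out}}_{\theta_{\alpha_{i}}}(m_{i}')$. That is \emph{not} the statement to be proved: $P^{\mathrm{out}}_{\theta}(m')$ is in general not an initial segment of $P^{\mathrm{out}}_{\theta}(m)$ for $m'<m$. For example, if $\theta\in(1-\tfrac{1}{100},\,1)$ then $S_{-\theta}\supset\{1,\dots,100\}$, so $P^{\mathrm{out}}_{\theta}(M)=(M)$ for all $M\le 100$, and $P^{\mathrm{out}}_{\theta}(3)=(3)$ is not an initial segment of $P^{\mathrm{out}}_{\theta}(5)=(5)$. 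Your sentence ``the partition coming from $u_{1}$ alone is forced to be an initial segment of the minimizer'' therefore asserts exactly the point that requires proof. The missing input is the interaction between $u_{1}$ and the trivial cylinders in $u_{0}$: one must use the additivity of the ECH index over the decomposition $u=u_{0}\cup u_{1}$ together with $I(u_{0})\ge 0$ and a lower bound on the intersection/linking contribution of $u_{0}$ with the ends of $u_{1}$, and only after all of these inequalities are forced to be equalities simultaneously --- note that the ECH index of $u$ involves the Conley--Zehnder sum up to the \emph{full} multiplicity $m_{i}$, not $m_{i}'$ --- does the initial-segment form of the conclusion emerge. This bookkeeping is the actual content of \cite{HT1}, Section 7, beyond the index inequality of \cite{H1}, and it is absent from your sketch.
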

\begin{rem}
    There are partition conditions with respect to hyperbolic orbits, but omitted because we don't use it in this paper.
\end{rem}

\subsubsection{$J_{0}$ index and topological complexity of $J$-holomorphic curve}

In this subsection, we recall the $J_{0}$ index.

\begin{dfn}[{\cite[{\S}3.3]{HT3}}]
Let $\alpha=\{(\alpha_{i},m_{i})\}$ and $\beta=\{(\beta_{j},n_{j})\}$ be orbit sets with $[\alpha]=[\beta]$.
For $Z\in H_{2}(Y,\alpha,\beta)$, we define
\begin{equation}
    J_{0}(\alpha,\beta,Z):=-c_{1}(\xi|_{Z},\tau)+Q_{\tau}(Z)+\sum_{i}\sum_{k=1}^{m_{i}-1}\mu_{\tau}(\alpha_{i}^{k})-\sum_{j}\sum_{k=1}^{n_{j}-1}\mu_{\tau}(\beta_{j}^{k}).
\end{equation}
\end{dfn}

\begin{dfn}[]
Let $u=u_{0}\cup{u_{1}}\in \mathcal{M}^{J}(\alpha,\beta)$. Suppose that $u_{1}$ is somewhere injective. Let $n_{i}^{+}$
be the number of positive ends of $u_{1}$ which are asymptotic to $\alpha_{i}$, plus 1 if $u_{0}$ includes the trivial cylinder $\mathbb{R}\times \alpha_{i}$ with some multiplicity. Likewise, let $n_{j}^{-}$ be the number of negative ends of $u_{1}$ which are asymptotic to $\beta_{j}$, plus 1 if $u_{0}$ includes the trivial cylinder $\mathbb{R}\times \beta_{j}$ with some multiplicity. 
\end{dfn}
Write $J_{0}(u)=J_{0}(\alpha,\beta,[u])$.
\begin{prp}[{\cite[Lemma 3.5]{HT3}} {\cite[Proposition 5.8]{H3}}]
Let $\alpha=\{(\alpha_{i},m_{i})\}$ and $\beta=\{(\beta_{j},n_{j})\}$ be admissible orbit sets, and let $u=u_{0}\cup{u_{1}}\in \mathcal{M}^{J}(\alpha,\beta)$. Then
\begin{equation}
    -\chi(u_{1})+\sum_{i}(n_{i}^{+}-1)+\sum_{j}(n_{j}^{-}-1)\leq J_{0}(u)
\end{equation}
If $u$ is counted by the ECH differential or the $U$-map, then the above equality holds. 
\end{prp}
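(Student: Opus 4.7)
The plan is to derive the inequality by combining Hutchings' relative adjunction formula applied to the somewhere injective part $u_1$ with the asymptotic writhe bounds for covers of elliptic and hyperbolic orbits, and by carefully tracking how the relative homology class $[u]$ splits between $u_1$ and the trivial cylinder component $u_0$.

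First, I would apply the relative adjunction formula to $u_1$, which expresses $c_1(\xi|_{u_1},\tau)$ in terms of $\chi(u_1)$, the relative self-intersection $Q_\tau(u_1)$, the total asymptotic writhe $w_\tau(u_1)$ at the ends of $u_1$, and a nonnegative singularity count $2\delta(u_1)$. Rearranging this identity isolates $-\chi(u_1)$ on one side in terms of $-c_1(\xi|_{u_1},\tau) + Q_\tau(u_1)$, the writhe, and the singularity count. This produces the ``topological'' side of the desired inequality up to writhe corrections at each end.

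Next, I would relate the quantities for $u_1$ to those appearing in the definition of $J_0(u)$. Since $u = u_0 \cup u_1$ with $u_0$ a union of trivial cylinders over some of the $\alpha_i$ and $\beta_j$, the class $[u] \in H_2(Y,\alpha,\beta)$ differs from $[u_1]$ precisely by these cylinders, and the resulting differences in $c_1$ and $Q_\tau$ can be expressed through linking numbers between $u_1$ and the trivial cylinders of $u_0$. Substituting into $J_0(u)$, these linking terms combine with the truncated Conley-Zehnder sums $\sum_{k=1}^{m_i - 1} \mu_\tau(\alpha_i^k)$ and $\sum_{k=1}^{n_j - 1} \mu_\tau(\beta_j^k)$ to produce the correction terms $n_i^+ - 1$ and $n_j^- - 1$. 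The crucial input is the asymptotic writhe bound of Hutchings-Taubes at each end of $u_1$: for an end at a cover $\alpha_i^q$ of multiplicity $q$, the writhe is bounded in terms of $\mu_\tau$ and the partition of that orbit's ends, with equality attained precisely when the partition of $u_1$'s positive ends at $\alpha_i$ (respectively the negative ends at $\beta_j$) is the outgoing partition $P_{\theta_{\alpha_i}}^{\mathrm{out}}(m_i)$ (respectively the incoming partition $P_{\theta_{\beta_j}}^{\mathrm{in}}(n_j)$).

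Finally, for the equality statement I would appeal to Propositions \ref{indexproperties} and \ref{partitioncondition}: for $u$ counted by the ECH differential or the $U$-map one has $I(u) = 1$ or $2$, which forces $u_1$ to be embedded so that $\delta(u_1) = 0$, and forces the partitions at each end to be initial segments of the extremal ones, saturating the writhe bound. The main obstacle is the combinatorial bookkeeping in the second paragraph: simultaneously handling (a) the difference between $Q_\tau([u])$ and $Q_\tau(u_1)$, (b) the truncation $\sum_{k=1}^{m_i - 1}$ in $J_0$ as opposed to the full $\sum_{k=1}^{m_i}$ that appears in the ECH index, and (c) the exact matching between writhe sums and Conley-Zehnder sums via the partitions, so that the leftover quantities collapse precisely to $\sum_i(n_i^+ - 1) + \sum_j (n_j^- - 1)$. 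This is exactly the content of Lemma 3.5 of \cite{HT3} and Proposition 5.8 of \cite{H3}.
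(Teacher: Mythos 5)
The paper gives no proof of this proposition---it is quoted verbatim from \cite{HT3} (Lemma 3.5) and \cite{H3} (Proposition 5.8)---and your sketch correctly reconstructs the argument of those sources: the relative adjunction formula applied to the somewhere injective part $u_{1}$, the asymptotic writhe bounds combined with the truncated Conley--Zehnder sums in $J_{0}$ to produce the $n_{i}^{+}-1$ and $n_{j}^{-}-1$ corrections (with the trivial-cylinder contributions of $u_{0}$ absorbed via linking numbers), and embeddedness plus the partition conditions in the case $I(u)\in\{1,2\}$ to force equality. This is essentially the same approach as the cited references; the only point worth flagging is that saturation of the writhe bound for curves counted by $\partial_{J}$ or $U_{J,z}$ is an analytic input from \cite{HT1} rather than a purely formal consequence of the partition conditions, but you are entitled to defer that to the literature exactly as the paper does.
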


\subsubsection{ECH spectrum}
The notion of ECH spectrum is introduced in \cite{H2}. At first, we consider the filtered ECH.

The action of an orbit set $\alpha=\{(\alpha_{i},m_{i})\}$ is defined by 
\begin{equation*}
A(\alpha)=\sum m_{i}A(\alpha_{i})=\sum m_{i}\int_{\alpha_{i}}\lambda. 
\end{equation*}
For any $L>0$,  $\mathrm{ECC}^{L}(Y,\lambda,\Gamma)$ denotes the subspace of  $\mathrm{ECC}(Y,\lambda,\Gamma)$ which is generated by ECH generators whose actions are less than $L$. Then $(\mathrm{ECC}^{L}(Y,\lambda,\Gamma),\partial_{J})$ becomes a subcomplex and the homology group $\mathrm{ECH}^{L}(Y,\lambda,\Gamma)$ is defined. 

It follows from the construction that there exists a canonical homomorphism $i_{L}:\mathrm{ECH}^{L}(Y,\lambda,\Gamma) \to \mathrm{ECH}(Y,\lambda,\Gamma)$. In addition, for non-degenerate contact forms $\lambda,\lambda'$ with $\mathrm{Ker}\lambda=\mathrm{Ker}\lambda'=\xi$, there is a canonical isomorphism $\mathrm{ECH}(Y,\lambda,\Gamma)\to \mathrm{ECH}(Y,\lambda',\Gamma)$ defined by  the cobordism maps for product cobordisms (see \cite{H2}). 
Therefore we can consider a pair of a group $\mathrm{ECH}(Y,\xi,\Gamma)$ and maps $j_{\lambda}:\mathrm{ECH}(Y,\lambda,\Gamma) \to \mathrm{ECH}(Y,\xi,\Gamma)$ for  any non-degenerate contact form $\lambda$ with $\mathrm{Ker}\lambda=\xi$ such that $\{j_{\lambda}\}_{\lambda}$ is compatible with the canonical map $\mathrm{ECH}(Y,\lambda,\Gamma)\to \mathrm{ECH}(Y,\lambda',\Gamma)$.
\begin{dfn}[{\cite[Definition 4.1, cf. Definition 3.4]{H2}}]
Let $Y$ be a closed oriented three manifold with a non-degenerate contact form $\lambda$ with $\mathrm{Ker}\lambda=\xi$ and $\Gamma \in H_{1}(Y,\mathbb{Z})$. If $0\neq \sigma \in \mathrm{ECH}(Y,\xi,\Gamma)$, define
\begin{equation*}\label{spect}
    c_{\sigma}^{\mathrm{ECH}}(Y,\lambda)=\inf\{L>0 |\, \sigma \in \mathrm{Im}(j_{\lambda}\circ i_{L}:\mathrm{ECH}^{L}(Y,\lambda,\Gamma) \to \mathrm{ECH}(Y,\xi,\Gamma))\, \}
\end{equation*}
If $\lambda$ is degenerate, define
\begin{equation*}
    c_{\sigma}^{\mathrm{ECH}}(Y,\lambda)=\sup\{c_{\sigma}(Y,f_{-}\lambda)\}=\inf\{c_{\sigma}(Y,f_{+}\lambda)\}
\end{equation*}
where the supremum is over functions $f_{-}:Y\to (0,1]$ such that $f\lambda$ is non-degenerate and the infimum is over smooth functions $f_{+}:Y\to [1,\infty)$ such that $f_{-}\lambda$ is non-degenerate. Note that $c_{\sigma}^{\mathrm{ECH}}(Y,\lambda)<\infty$ and this definition makes sense. See {\cite[Definition 4.1, Definition 3.4, \S 2.3]{H2}} for more details.
\end{dfn}

\begin{dfn}\cite[Subsection 2.2]{H2}
Let $Y$ be a closed oriented three manifold with a non-degenerate contact form $\lambda$ with $\xi$. Then there is a canonical element called the ECH contact invariant,
\begin{equation}
    c(\xi):=\langle \emptyset \rangle \in \mathrm{ECH}(Y,\lambda,0).
\end{equation}
where $\langle \emptyset \rangle$ is the equivalent class containing $\emptyset$.
Note that $\partial_{J} \emptyset =0$ because of the maximal principle. In addition, $c(\xi)$  depends only on the contact structure $\xi$. 
\end{dfn}
\begin{dfn}\label{echspect}\cite[Definition 4.3]{H2}
    If $(Y,\lambda)$ is a closed connected contact three-manifold with $c(\xi)\neq 0$, and if $k$ is a nonnegative integer, define
    \begin{equation}
        c_{k}^{\mathrm{ECH}}(Y,\lambda):=\min\{c_{\sigma}^{\mathrm{ECH}}(Y,\lambda)|\,\sigma\in \mathrm{ECH}(Y,\xi,0),\,\,U^{k}\sigma=c(\xi)\}
    \end{equation}
The sequence $\{c_{k}^{\mathrm{ECH}}(Y,\lambda)\}$ is called the  ECH spectrum of $(Y,\lambda)$.
\end{dfn}
\begin{prp}\label{properties1}
    Let $(Y,\lambda)$ be a closed connected contact three manifold.
    \item[(1).]   \begin{equation*}
         0=c_{0}^{\mathrm{ECH}}(Y,\lambda)<c_{1}^{\mathrm{ECH}}(Y,\lambda) \leq c_{2}^{\mathrm{ECH}}(Y,\lambda)...\leq \infty.
     \end{equation*}
    \item[(2).] For any  $a>0$ and positive integer $k$, 
    \begin{equation*}
         c_{k}^{\mathrm{ECH}}(Y,a\lambda)= ac_{k}^{\mathrm{ECH}}(Y,\lambda).
    \end{equation*}
    \item[(3).] Let $f_{1},f_{2}:Y\to (0,\infty)$ be smooth functions with $f_{1}(x)\leq f_{2}(x)$ for every $x\in Y$. Then \begin{equation*}
        c_{k}^{\mathrm{ECH}}(Y,f_{1}\lambda)\leq c_{k}^{\mathrm{ECH}}(Y,f_{2}\lambda).
    \end{equation*}
    \item[(4).] Suppose $c_{k}^{\mathrm{ECH}}(Y,f\lambda)<\infty$. The map 
    \begin{equation*}
        C^{\infty}(Y,\mathbb{R}_{>0})\ni f \mapsto c_{k}^{\mathrm{ECH}}(Y,f\lambda)\in \mathbb{R}
    \end{equation*}
     is continuous in $C^{0}$-topology on $C^{\infty}(Y,\mathbb{R}_{>0})$.
\end{prp}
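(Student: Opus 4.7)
The plan is to verify each of the four claims by unpacking the definition of $c_k^{\mathrm{ECH}}$ together with standard compatibilities recorded in \cite{H2}. First I would record two structural facts used throughout: (a) the action filtration $\mathrm{ECC}^L \subset \mathrm{ECC}^{L'}$ for $L \leq L'$ is respected by $\partial_J$ and by $U_{J,z}$, since any non-$\mathbb{R}$-invariant $J$-holomorphic current in $\mathbb{R}\times Y$ has total action of its positive ends at least that of its negative ends (the $d\lambda$-energy is nonnegative); and (b) for $a>0$ the identity $\gamma \mapsto \gamma$ identifies closed orbits of $a\lambda$ with those of $\lambda$ while multiplying actions by $a$, so that $\mathrm{ECC}^{aL}(Y,a\lambda)$ is canonically isomorphic to $\mathrm{ECC}^{L}(Y,\lambda)$ via an identification intertwining $\partial_J$, $U_{J,z}$, and $j_\lambda$.

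For (1), the class $c(\xi) = j_\lambda([\emptyset])$ is represented by the action-zero generator $\emptyset$, hence $c_0^{\mathrm{ECH}}=0$. For $c_1^{\mathrm{ECH}}>0$, pick any $L>0$ smaller than the minimum action of a closed orbit; then $\mathrm{ECC}^L(Y,\lambda,0)$ is spanned by $\emptyset$ alone, while $U_{J,z}[\emptyset]=0$ because no non-$\mathbb{R}$-invariant $J$-holomorphic curve has only $\emptyset$ as an end. Thus any $\sigma$ with $U\sigma = c(\xi)$ has $c_\sigma^{\mathrm{ECH}} \geq L$. The monotonicity $c_k^{\mathrm{ECH}} \leq c_{k+1}^{\mathrm{ECH}}$ follows from (a): if $\sigma$ is represented in $\mathrm{ECH}^L$ and $U^{k+1}\sigma = c(\xi)$, then $U\sigma$ is also represented in $\mathrm{ECH}^L$ and satisfies $U^k(U\sigma) = c(\xi)$. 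Property (2) is (b) applied verbatim to the definition of the spectrum.

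For (3), the inequality $f_1 \leq f_2$ produces an exact symplectic cobordism with negative end $(Y,f_1\lambda)$ and positive end $(Y,f_2\lambda)$. The induced cobordism maps on filtered ECH, constructed via the Seiberg--Witten Floer identification \cite{T1} (cf.\ \cite{HT3}), give chain maps compatible with the action filtration, commuting with the $U$-map on homology and intertwining the maps $j_\lambda$. Pushing a witness $\sigma \in \mathrm{ECH}(Y,\xi,0)$ for $c_k^{\mathrm{ECH}}(Y,f_2\lambda)$ through the cobordism produces a class with the same $U^k$-image at action no larger than $c_k^{\mathrm{ECH}}(Y,f_2\lambda)$ on the $f_1$-side, yielding the desired inequality. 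Property (4) then follows formally from (2) and (3): on any $C^0$-neighborhood of $f\in C^\infty(Y,\mathbb{R}_{>0})$ we can arrange $(1-\epsilon)f \leq g \leq (1+\epsilon)f$, and (3) together with (2) gives
\begin{equation*}
(1-\epsilon)\,c_k^{\mathrm{ECH}}(Y,f\lambda) \;\leq\; c_k^{\mathrm{ECH}}(Y,g\lambda) \;\leq\; (1+\epsilon)\,c_k^{\mathrm{ECH}}(Y,f\lambda),
\end{equation*}
with the degenerate case absorbed into the sup/inf definition. The only substantive input is the cobordism map invoked in (3); the remainder is bookkeeping with the filtration and the $U$-map, so that step is where I expect the main work to be concentrated.
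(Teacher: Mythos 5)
Your proposal is correct and follows exactly the standard arguments of \cite{H2}, which is all the paper does here: its ``proof'' of Proposition \ref{properties1} is simply the citation ``They follow from the properties of ECH. See \cite{H2}.'' The key ingredients you identify --- the action filtration being respected by $\partial_J$ and $U_{J,z}$, the scaling identification, the filtered cobordism maps for the exact cobordism induced by $f_1\leq f_2$, and continuity deduced formally from (2) and (3) --- are precisely the ones used there.
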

\begin{proof}[\bf Proof of Proposition \ref{properties1}]
They follow from the properties of ECH. See \cite{H2}.
\end{proof}
\subsubsection{ECH on Lens spaces}
Now, we focus on lens spaces.

Since $H_{2}(L(p,q))=0$, we write ECH index of $\alpha$, $\beta$ as $I(\alpha,\beta)$ instead of $I(\alpha,\beta,Z)$ where $\{Z\}= H_{2}(L(p,q);\alpha,\beta)$.

Let $(L(p,q),\lambda)$ be a non-degenerate contact lens space and consider ECH of  $0\in H_{1}(L(p,q))$. Since $[\emptyset]=0$, there is  an absolute $\mathbb{Z}$-grading on $\mathrm{ECH}(L(p,q),\lambda,0)=\bigoplus_{k\in\mathbb{Z}}\mathrm{ECH}_{k}(L(p,q),\lambda,0)$ defined by ECH index relative to $\emptyset$ where $\mathrm{ECH}_{k}(L(p,q),\lambda,0)$ is as follows.
Define
\begin{equation*}
    \mathrm{ECC}_{k}(L(p,q),\lambda,0):= \bigoplus_{\alpha:\mathrm{ECH\,\,generator},\,{[\alpha]=0},\,I(\alpha,\emptyset)=k}\mathbb{F}\cdot \alpha.
\end{equation*}
Since  $\partial_{J} $  maps $\mathrm{ECC}_{*}(L(p,q),\lambda,0)$ to  $\mathrm{ECC}_{*-1}(L(p,q),\lambda,0)$ and $U_{J,z}$ does  $\mathrm{ECC}_{*}(L(p,q),\lambda,0)$ to $\mathrm{ECC}_{*-2}(L(p,q),\lambda,0)$, we have $\mathrm{ECH}_{k}(L(p,q),\lambda,0)$ and 
\begin{equation*}
    U:\mathrm{ECH}_{*}(L(p,q),\lambda,0) \to\mathrm{ECH}_{*-2}(L(p,q),\lambda,0).
\end{equation*}
Based on these understandings, the next follows.
\begin{prp}\label{lenssp}
 Let $(L(p,q),\lambda)$ be non-degenerate.
 \item[(1).]   If $k$ is even and non-negative,
    \begin{equation*}
        \mathrm{ECH}_{k}(L(p,q),\lambda,0)\cong \mathbb{F}.
    \end{equation*}
    If $k$ is odd or negative, $\mathrm{ECH}_{k}(L(p,q),\lambda,0)$ is zero. 
    Moreover, for $n\geq1$ the $U$-map
    \begin{equation*}
        U:\mathrm{ECH}_{2n}(L(p,q),\lambda,0)\to\mathrm{ECH}_{2(n-1)}(L(p,q),\lambda,0)
    \end{equation*}
    is isomorphism.
    \item[(2).] If $\mathrm{Ker}\lambda=\xi_{\mathrm{std}}$, $0\neq c(\xi_{\mathrm{std}})\in \mathrm{ECH}_{0}(L(p,q),\lambda,0)$. Therefore, we can define the ECH spectrum (Definition \ref{echspect}).
\end{prp}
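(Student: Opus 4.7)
The plan is to deduce Proposition \ref{lenssp} from Taubes' identification of embedded contact homology with monopole Floer cohomology, together with the standard monopole Floer computation for lens spaces.

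For part (1), the theorem of Taubes in \cite{T1} provides an absolutely graded isomorphism between $\mathrm{ECH}_{*}(L(p,q),\lambda,0)$ and $\widehat{HM}^{-*}$ of $L(p,q)$ in the appropriate $\mathrm{spin}^{c}$-structure $\mathfrak{s}_{\xi}$, intertwining the ECH $U$-map with the monopole $U$-map. Since $L(p,q)$ is an L-space, its monopole Floer homology in every $\mathrm{spin}^{c}$-structure is a single $U$-tower, isomorphic as a graded $\mathbb{F}[U]$-module to $\mathbb{F}[U^{-1},U]/U\mathbb{F}[U]$, and in particular $U$ acts as an isomorphism between successive non-vanishing gradings. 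The empty set $\emptyset$ is automatically an ECH cycle in degree $0$, so it represents a non-zero class, and matching absolute gradings places the $\mathbb{F}$ summands in exactly the non-negative even integers, with the $U$-map from $\mathrm{ECH}_{2n}$ to $\mathrm{ECH}_{2(n-1)}$ being an isomorphism for every $n\geq 1$.

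For part (2), the contact invariant $c(\xi_{\mathrm{std}})=\langle\emptyset\rangle$ lies in $\mathrm{ECH}_{0}$ by construction. Its non-vanishing follows from Taubes' theorem together with the Kronheimer--Mrowka result that the monopole contact invariant does not vanish for a weakly symplectically fillable contact structure \cite{KM}; here $\xi_{\mathrm{std}}$ on $L(p,q)$ is the quotient of the standard contact structure on $S^{3}$, hence is Stein fillable. Combined with (1), this justifies the well-definedness of the ECH spectrum in Definition \ref{echspect}.

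The main obstacle is carefully matching the absolute gradings across Taubes' isomorphism: one must verify that the correction term $d(\mathfrak{s}_{\xi})$ vanishes, so that the grading of $\emptyset$ coincides with the top of the monopole $U$-tower, and check that no ECH generator with $[\alpha]=0$ sits in negative degree. A parallel sanity check is to carry out a direct ECH computation on a non-degenerate perturbation of the quotient of an irrational ellipsoid $\partial E(a_{1},a_{2})$, which has two simple elliptic orbits descending to $L(p,q)$; enumerating products whose total class vanishes in $H_{1}(L(p,q))=\mathbb{Z}/p$ together with their ECH indices then explicitly realizes one generator in each non-negative even grading, recovering the stated computation without appeal to Seiberg--Witten theory.
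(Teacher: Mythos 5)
Your proposal is correct and is essentially the argument the paper relies on: the paper's proof of Proposition \ref{lenssp} consists entirely of the citation ``See \cite{H2}'', and the content behind that citation is precisely what you describe --- Taubes' isomorphism with monopole Floer homology plus the L-space tower structure for lens spaces, or equivalently the direct enumeration of generators $e_{1}^{m_{1}}e_{2}^{m_{2}}$ with vanishing class on a perturbed irrational-ellipsoid quotient. The two points you flag (matching the absolute grading so that $\emptyset$ sits at the bottom of the tower, and the non-vanishing of $c(\xi_{\mathrm{std}})$ via Stein fillability and \cite{KM}) are exactly the ingredients implicit in the cited reference, so your route coincides with the paper's.
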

\begin{proof}[\bf Proof of Proposition \ref{lenssp}]
See \cite{H2}.
\end{proof}
\section{Proof of Theorem \ref{thm:main}}

In this section, we prove Theorem \ref{thm:main}.

First things first, we recall the existence of the following local coordinate called Martinet tube which is useful for our arguments in this paper.

\begin{prp}\cite{HWZ1}
Let $(Y^3,\lambda)$ be a contact three manifold wiht $\mathrm{Ker}\lambda=\xi$.  For a simple orbit $\gamma$,    there is  a  diffeomorphism called Martinet tube $F:\RR/\ZZ \times \DD_{\delta} \to \Bar{U}$ for a sufficiently small $\delta>0$ such that $F(t,0)=\gamma(t)$ and 
 there exists a smooth function $f:\RR/\ZZ \times \DD \to (0,+\infty)$ satisfying $f(\theta,0)=T_{\gamma}$, $df(\theta,0)=0$ and $F^{*}\lambda=f(\theta,x+iy)(d\theta+xdy)$. Here $\DD_{\delta}$ is the disk with radius $\delta$.
 Note that $\mathrm{Ker}F^{*}\lambda|_{\RR/\ZZ\times \{0\}}=\mathrm{span}(\partial_{x},\partial_{y})$.  Let  $\tau_{F}:\gamma^{*}\xi \to \RR/\ZZ \times \RR^{2}$ denote the induced trivialization by $F$ which maps $a\partial_{x}+b\partial_{y}$ to $(a,b)\in \RR^{2}$ on each fiber.
\end{prp}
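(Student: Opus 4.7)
The statement is a contact-geometric tubular neighborhood normal form for a closed Reeb orbit. The plan is: (i) construct an initial smooth tubular embedding that puts $\xi$ in standard form along the orbit, (ii) upgrade to a genuine contactomorphism via a Moser/Gray-stability argument, and (iii) read off the normalisations of $f$ from the Reeb condition.

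\emph{First step.} Choose a unitary trivialisation of the symplectic bundle $(\xi|_{\gamma}, d\lambda|_{\xi})$ over $\gamma$. Combined with the exponential map of an auxiliary Riemannian metric, this yields a smooth embedding $F_{0}:\RR/\ZZ\times\DD_{\delta_{0}}\to Y$ with $F_{0}(\theta,0)=\gamma(T_{\gamma}\theta)$ whose differential sends $\mathrm{span}(\partial_{x},\partial_{y})$ to $\xi$ along the core. In particular the contact structures $\xi_{0}:=F_{0}^{-1}\xi$ and $\xi_{1}:=\ker(d\theta+xdy)$ agree along $\{(x,y)=0\}$ on $\RR/\ZZ\times\DD_{\delta_{0}}$.

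\emph{Second step.} Apply a Moser/Gray-stability argument to the family of contact structures interpolating $\xi_{0}$ and $\xi_{1}$. Writing $\xi_{i}=\ker\beta_{i}$ with $\beta_{0},\beta_{1}$ normalised so that $\beta_{0}|_{\mathrm{core}}=\beta_{1}|_{\mathrm{core}}$, set $\beta_{s}:=(1-s)\beta_{0}+s\beta_{1}$; the first-order agreement on the core makes each $\beta_{s}$ a contact form on a sufficiently small tube, uniformly in $s$. Solve the Moser equation $\iota_{X_{s}}d\beta_{s}+\mu_{s}\beta_{s}=-\dot\beta_{s}$ with $\beta_{s}(X_{s})=0$ for a smooth family of vector fields $X_{s}$ vanishing on the core, and integrate to an isotopy $\psi_{s}$ fixing the core and satisfying $\psi_{s}^{*}\xi_{s}=\xi_{0}$. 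Then $F:=F_{0}\circ\psi_{1}^{-1}$ is a contact diffeomorphism $(\RR/\ZZ\times\DD_{\delta},\ker(d\theta+xdy))\to(\bar U,\xi|_{\bar U})$, so $F^{*}\lambda=f\cdot(d\theta+xdy)$ for some smooth positive function $f$.

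\emph{Third step.} Verify the normalisations of $f$. Since $\psi_{1}$ fixes the core and $F_{0}(\theta,0)=\gamma(T_{\gamma}\theta)$, we still have $F(\theta,0)=\gamma(T_{\gamma}\theta)$, so $F^{*}\lambda(\partial_{\theta})|_{\mathrm{core}}=\lambda(T_{\gamma}\dot\gamma)=T_{\gamma}$, which forces $f(\theta,0)=T_{\gamma}$. Constancy in $\theta$ then gives $\partial_{\theta}f|_{\mathrm{core}}=0$ automatically. The vanishing of $\partial_{x}f,\partial_{y}f$ on the core follows from the Reeb condition $\iota_{R}d(f(d\theta+xdy))=0$ applied at points of $\gamma$: since $R=f^{-1}\partial_{\theta}$ there, and since $d(f(d\theta+xdy))|_{\mathrm{core}}=df\wedge d\theta+f\,dx\wedge dy$, contracting with $f^{-1}\partial_{\theta}$ produces the form $-f^{-1}(\partial_{x}f\,dx+\partial_{y}f\,dy)+f\,(a\,dy-b\,dx)$ for the transverse component of $R$, whose vanishing together with $R$ being tangent to $\gamma$ yields $\partial_{x}f=\partial_{y}f=0$ on the core.

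The main obstacle will be the Moser step: one needs the contact condition for $\beta_{s}$ and the flow of $X_{s}$ to persist on a common neighborhood for all $s\in[0,1]$. This reduces to using the $C^{1}$-agreement of $\beta_{0}$ and $\beta_{1}$ along the core (arranged in the first step) to get uniform estimates; once that is in place, the classical Moser/Gray machinery for relative contact normal forms runs through.
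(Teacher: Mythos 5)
Your proposal is correct and follows exactly the standard construction that the paper implicitly relies on: the paper gives no proof of this proposition, quoting it directly from \cite{HWZ1}, and the argument there is precisely your scheme of normalizing a tubular neighborhood so that the two contact forms agree along the core, running a relative Moser/Gray isotopy whose generating vector field vanishes on the core, and then reading off $f(\theta,0)=T_{\gamma}$ and $df(\theta,0)=0$ from $\lambda(X_{\lambda})=1$ and $\iota_{X_{\lambda}}d\lambda=0$ along $\gamma$. The only cosmetic slip is that you appeal to ``$C^{1}$-agreement'' of $\beta_{0}$ and $\beta_{1}$ on the core, whereas your first step only arranges---and your Moser step only needs---agreement of the forms at points of the core, since that already makes the convex combination $\beta_{s}$ contact near the core uniformly in $s$ and makes $\dot\beta_{s}$ vanish there.
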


Remark that we can take $F$ so that the trivialization $\tau_{F}$ realizes any given homotopy class of  trivializations over  $\gamma$.

 Consider  $V_{1}=S^{1}\times \DD$, $V_{2}=S^{1}\times \DD$ and a gluing map $g:\partial V_{1}=S^{1}\times \partial \DD \to S^{1}\times \partial \DD=\partial V_{2}$ which is described as
\begin{equation*}
\begin{pmatrix}
a & b \\
c & d \\
\end{pmatrix}
\end{equation*}
in standard longitude-meridian coordinates on the torus where $a,b,c,d \in \ZZ$, $b>0$ and $ad-bc=1$.  Then, there is an orientation-preserving diffeomorphism from the glued manifold $V_{1}\cup_{g}V_{2}$ to $L(p,q)$ if and only if $b=p$ and in addition either $d=-q\,\,\mathrm{mod}\,\,p$ or    $dq=-1\,\,\mathrm{mod}\,\,p$ (as remarked, the orientation of $L(p,q)$ is induced by the $4$-dimensional ball). Note that the boundary of a meridian disk of $V_{1}$ is glued by $g$ along a $(p,d)$-cable curve on $\partial V_{2}=S^{1}\times \partial \DD$. 

Let $\gamma\subset (L(p,q),\lambda)$ be a $p$-unknotted Reeb orbits and $u:\DD \to L(p,q)$ be a rational Seifert surface of $\gamma^{p}$.  
Take a  Martinet tube  $F:\RR/T_{\gamma}\ZZ \times \DD_{\delta} \to \Bar{U}$ for a sufficiently small $\delta>0$ onto a small  open neighbourhood $\gamma \subset 
 \Bar{U}$.
 
 As Remark \ref{heegaard}, $\Bar{U}$ is a solid torus such that $L(p,q)\backslash U$ is also a solid torus, which gives a Heegaard decomposition of genus $1$.  In addition, $u(\DD)\cap (L(p,q)\backslash U)$ is a meridian disk of $L(p,q)\backslash U$. Therefore, $F^{-1}(u(\DD)\cap\partial \Bar{U})$ is a $(p,r)$ cable such that either $-r=q\,\,\mathrm{mod}\,\,p$ or   $-rq=1\,\,\mathrm{mod}\,\,p$ with respect to the coordinate of $\RR/T_{\gamma}\ZZ \times \DD $. Therefore, Theorem \ref{thm:main} follows directly from the next proposition.

 \begin{prp}\label{main;prop}
     Suppose that the above $F^{-1}(u(\DD)\cap\partial \Bar{U})$ is a $(p,r)$ cable. If $-2r-2p\cdot sl^{\mathbb{Q}}_{\xi}(\gamma)-\mu_{\mathrm{disk}}(\gamma^{p})$  is not divisible by $p$, then  $\gamma$ is elliptic.
 \end{prp}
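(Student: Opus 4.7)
The plan is to compute the Conley--Zehnder index $\mu_{\tau_F}(\gamma^p)$ with respect to the Martinet trivialization $\tau_F$ and show that
\[
\mu_{\tau_F}(\gamma^p) \;=\; \mu_{\mathrm{disk}}(\gamma^p) + 2r + 2p\cdot sl_\xi^{\mathbb{Q}}(\gamma).
\]
Given this identity, the proposition follows by contraposition via Proposition \ref{conleybasic}(1): a hyperbolic simple orbit $\gamma$ satisfies $\mu_{\tau_F}(\gamma^p)=p\cdot \mu_{\tau_F}(\gamma)\equiv 0 \pmod{p}$, so the hypothesis that $-2r-2p\cdot sl_\xi^{\mathbb{Q}}(\gamma)-\mu_{\mathrm{disk}}(\gamma^p)$ is not divisible by $p$ immediately rules out the hyperbolic case, forcing $\gamma$ to be elliptic.

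By Proposition \ref{conleywind} the index identity above reduces to the winding computation
\[
\mathrm{wind}(\tau_{\mathrm{disk}},\tau_F) \;=\; r + p\cdot sl_\xi^{\mathbb{Q}}(\gamma),
\]
which I would establish by counting intersections with $u(\mathbb{D})$ of two explicit push-offs of $\gamma^p$. The first push-off is the curve $\gamma_\epsilon$ from the definition of $sl_\xi^{\mathbb{Q}}(\gamma)$, coming from a nonvanishing section $Z$ of $u^{*}\xi$ representing $\tau_{\mathrm{disk}}$; by definition $\#(\gamma_\epsilon\cap u(\mathbb{D}))=p^2\cdot sl_\xi^{\mathbb{Q}}(\gamma)$. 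The second is the push-off $\gamma^p_F$ along the constant Martinet section $\partial_x$, sitting on a small torus $\{|z|=\epsilon\}\subset \overline{U}$. I would isotope $\gamma^p_F$ radially inside $\overline{U}$ out to $\partial \overline{U}$; the isotopy stays away from the core $\gamma$, so the intersection count with $u(\mathbb{D})$ is unchanged. On $\partial \overline{U}$ the isotoped push-off is a $(p,0)$-cable while $u(\mathbb{D})\cap \partial \overline{U}$ is, by hypothesis, the $(p,r)$-cable, and the algebraic intersection of these two curves on the torus gives $\#(\gamma^p_F\cap u(\mathbb{D}))=\pm pr$.

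To relate these two counts to the winding, I would interpolate the sections $Z$ and $\partial_x$ linearly and exponentiate to obtain a cylinder $H\subset \overline{U}$ with $\partial H=\gamma_\epsilon - \gamma^p_F$. The cylinder meets the core $\gamma$ precisely at the zeros of the interpolating section, so $H\cdot \gamma = \pm\mathrm{wind}(\tau_{\mathrm{disk}},\tau_F)$. Applying the chain-level relation $\partial H \cdot u(\mathbb{D}) = -H\cdot \partial u(\mathbb{D}) = -p\,(H\cdot \gamma)$, using that $u|_{\partial\mathbb{D}}$ is a $p$-fold cover of $\gamma$, yields
\[
\#(\gamma_\epsilon\cap u(\mathbb{D})) - \#(\gamma^p_F\cap u(\mathbb{D})) = \mp\, p\cdot \mathrm{wind}(\tau_{\mathrm{disk}},\tau_F),
\]
and substituting the previous intersection numbers produces the desired winding formula.

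The main obstacle will be orientation bookkeeping. The sign of the intersection $(p,0)\cdot(p,r)$ on the torus $\partial \overline{U}$, the sign in the identity $\partial H \cdot u(\mathbb{D}) = -H\cdot \partial u(\mathbb{D})$, the orientation of the core $\gamma$ as $\frac{1}{p}\partial u(\mathbb{D})$, and the sign with which each zero of the interpolating section contributes to $H\cdot \gamma$ all have to match consistently with the ambient orientation on $L(p,q)$ so that the resulting winding picks up the plus sign $r+p\cdot sl_\xi^{\mathbb{Q}}(\gamma)$. Once these signs are pinned down the argument is purely formal, combining the two properties of the Conley--Zehnder index recorded in Propositions \ref{conleywind} and \ref{conleybasic} with the topological intersection computation in the Heegaard decomposition induced by the Martinet tube.
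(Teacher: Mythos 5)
Your proposal is correct and its skeleton coincides with the paper's: the identity $\mu_{\tau_F}(\gamma^{p})=\mu_{\mathrm{disk}}(\gamma^{p})+2r+2p\, sl_{\xi}^{\mathbb{Q}}(\gamma)$ is exactly Lemma \ref{lem;key}, and the closing contrapositive step via Proposition \ref{conleybasic}(1) (hyperbolic forces $\mu_{\tau_F}(\gamma^{p})=p\mu_{\tau_F}(\gamma)\equiv 0 \bmod p$) is precisely how the paper concludes. The one place you genuinely deviate is the justification of the winding formula $\mathrm{wind}(\tau_{\mathrm{disk}},\tau_F)=r+p\, sl_{\xi}^{\mathbb{Q}}(\gamma)$: the paper obtains the division by $p$ by lifting $\gamma^{p}$, $u$, and both push-offs to the universal cover $S^{3}$, noting that the $p$ lifts of each push-off contribute equally to the intersection with $\widetilde{u}(\DD)$, and then applying the standard winding-equals-intersection identity for an honest embedded disk upstairs; you instead stay in $L(p,q)$ and extract the factor $p$ homologically, from the interpolating cylinder $H$ with $\partial H\cdot u(\DD)=-H\cdot \partial u(\DD)=-p\,(H\cdot \gamma)$ together with the identification of $H\cdot\gamma$ with the signed count of zeros of the interpolating section. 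Both mechanisms are valid; the covering-space route lets one quote the classical Seifert-disk formula without re-deriving it, while your cylinder argument is self-contained and makes explicit where the multiplicity $p$ of $u|_{\partial\DD}$ enters. The two intersection counts you feed in ($p^{2}sl_{\xi}^{\mathbb{Q}}(\gamma)$ for the disk push-off, and $\pm pr$ from pairing the $(p,0)$- and $(p,r)$-cables on $\partial\overline{U}$ after the radial isotopy, which is legitimate since the count only depends on the class in $H_{1}(L(p,q)\setminus\gamma)$) match the paper's $\#(u(\DD)\cap Z^{\epsilon}_{\mathrm{disk}})=p^{2}sl_{\xi}^{\mathbb{Q}}(\gamma)$ and $\#(u(\DD)\cap Z^{\epsilon}_{F})=-rp$, and the sign bookkeeping you flag as the remaining work is exactly what the paper also leaves to ``a direct observation.''
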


To prove the above proposition, we take sections $Z_{\mathrm{disk}}:\RR/pT_{\gamma}\ZZ \to (\gamma^p)^{*}\xi$ and $Z_{F}:\RR/pT_{\gamma}\ZZ \to (\gamma^p)^{*}\xi$ so that $Z_{\mathrm{disk}}$ extends to a non-vanishing section on $u^{*}\xi$ and $Z_{F}$  corresponds to $\partial_{x}$ on the coordinate induced by $F$. Let $Z^{\epsilon}_{\mathrm{disk}}:\RR/pT_{\gamma}\ZZ \to L(p,q)$ and $Z^{\epsilon}_{F}:\RR/pT_{\gamma}\ZZ \to L(p,q)$ denote the curves  $Z^{\epsilon}_{\mathrm{disk}}(t)=\mathrm{exp}_{\gamma(t)}(\epsilon Z_{\mathrm{disk}}(t))$ and $Z^{\epsilon}_{F}(t)=\mathrm{exp}_{\gamma(t)}(\epsilon Z_{F}(t))$ for small $\epsilon>0$ respectively.  
Then, it follows from a direct observation and the definition that $\# (u(\DD)\cap Z^{\epsilon}_{0})=-rp$ and  $\# (u(\DD)\cap Z^{\epsilon}_{\mathrm{disk}})=p^{2}sl_{\xi}^{\mathbb{Q}}(\gamma)$ (note the orientation and sign).
 
Let $\rho:S^{3}\to L(p,q)$ be the covering map. We can take lifts of $\gamma^{p}:\RR/pT_{\gamma}\ZZ \to L(p,q)$ and the rational Seifert surface  $u:\DD \to L(p,q)$ to $S^{3}$, and write $\Tilde{\gamma}: \RR/pT_{\gamma}\ZZ \to S^{3}$, $\Tilde{u}:\DD \to S^{3}$ respectively. We may assume that $\Tilde{\gamma}(pT_{\gamma}t)=\Tilde{u}(e^{2\pi t})$. 
In the same way, we take lifts of $Z_{\mathrm{disk}},\,Z_{F}:\RR/pT_{\gamma}\ZZ \to (\gamma^p)^{*}\xi$ and write $\Tilde{Z}_{\mathrm{disk}},\,\Tilde{Z}_{F}:\RR/pT_{\gamma}\ZZ \to \Tilde{\gamma}^{*}\xi$ respectively. Let $\Tilde{Z}^{\epsilon}_{\mathrm{disk}}(t)=\mathrm{exp}_{\Tilde{\gamma}(t)}(\epsilon \Tilde{Z}_{\mathrm{disk}}(t))$ and $\Tilde{Z}^{\epsilon}_{F}(t)=\mathrm{exp}_{\Tilde{\gamma}(t)}(\epsilon \Tilde{Z}_{F}(t))$. 
Then it follows from the construction that $\Tilde{Z}^{\epsilon}_{\mathrm{disk}}$ and $\Tilde{Z}^{\epsilon}_{F}$ are lifts of $Z^{\epsilon}_{\mathrm{disk}}$ and $Z^{\epsilon}_{F}$ respectively. 
Since there are $p$ ways of lifting $Z^{\epsilon}_{\mathrm{disk}}$, $Z^{\epsilon}_{F}$  and each intersection number of a lift with $\Tilde{u}(\DD)$ is equal to each other, we have
\begin{equation*}
\# (u(\DD)\cap Z^{\epsilon}_{F})=p\# (\Tilde{u}(\DD)\cap \Tilde{Z}^{\epsilon}_{F}),\,\,\# (u(\DD)\cap Z^{\epsilon}_{\mathrm{disk}})=p\# (\Tilde{u}(\DD)\cap \Tilde{Z}^{\epsilon}_{\mathrm{disk}})
\end{equation*}
and hence $\# (\Tilde{u}(\DD)\cap \Tilde{Z}^{\epsilon}_{F})=-r,\,\,\# (\Tilde{u}(\DD)\cap \Tilde{Z}^{\epsilon}_{\mathrm{disk}})=psl_{\xi}^{\mathbb{Q}}(\gamma)$.

   Recall that $\tau_{F}:\gamma^{*}\xi\to \RR/T_{\gamma}\times \RR^{2}$ denotes the  trivialization induced by $F$. As mentioned, we use the same notation $\tau_{F}$  for a trivialization $(\gamma^{p})^{*}\xi\to \RR/T_{\gamma}\times \RR^{2}$ induced by $\tau_{F}:\gamma^{*}\xi\to \RR/T_{\gamma}\times \RR^{2}$.
   
   \begin{lem}\label{lem;key}
       $\mu_{\tau_{F}}(\gamma^{p})-2psl_{\xi}^{\mathbb{Q}}(\gamma)-2r=\mu_{\mathrm{disk}}(\gamma^{p})$
   \end{lem}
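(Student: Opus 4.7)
The plan is to invoke Proposition \ref{conleywind} to convert the desired identity into a statement about the winding number $\mathrm{wind}(\tau_F, \tau_{\mathrm{disk}})$ between the Martinet-tube trivialization and the disk trivialization of $(\gamma^p)^*\xi$. Concretely, Proposition \ref{conleywind} gives
\begin{equation*}
\mu_{\mathrm{disk}}(\gamma^p) = \mu_{\tau_{\mathrm{disk}}}(\gamma^p) = \mu_{\tau_F}(\gamma^p) + 2\,\mathrm{wind}(\tau_F, \tau_{\mathrm{disk}}),
\end{equation*}
so the lemma reduces to showing $\mathrm{wind}(\tau_F, \tau_{\mathrm{disk}}) = -p\,sl_\xi^{\mathbb{Q}}(\gamma) - r$.

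To compute this integer I would pass to the universal cover $\rho: S^3 \to L(p,q)$, where genuine linking numbers are available. Since $\rho$ is a local diffeomorphism, the sections $Z_F$ and $Z_{\mathrm{disk}}$ lift to non-vanishing sections $\Tilde{Z}_F, \Tilde{Z}_{\mathrm{disk}}$ of $(\Tilde{\gamma}^p)^*\xi$ inducing trivializations whose relative winding agrees with $\mathrm{wind}(\tau_F, \tau_{\mathrm{disk}})$. In $S^3$, the curve $\Tilde{\gamma}^p$ is an unknot bounding the disk $\Tilde{u}(\DD)$, so for any contact push-off of $\Tilde{\gamma}^p$ the linking number with $\Tilde{\gamma}^p$ is computed as its algebraic intersection with $\Tilde{u}(\DD)$.

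The central step is then the ``framing-change equals winding'' identity
\begin{equation*}
\mathrm{wind}(\tau_F, \tau_{\mathrm{disk}}) = \#\bigl(\Tilde{u}(\DD) \cap \Tilde{Z}_F^\epsilon\bigr) - \#\bigl(\Tilde{u}(\DD) \cap \Tilde{Z}_{\mathrm{disk}}^\epsilon\bigr),
\end{equation*}
which holds because replacing one non-vanishing section of $(\Tilde{\gamma}^p)^*\xi$ by another that rotates $n$ additional times against it changes the algebraic intersection of the corresponding push-off with any Seifert disk by exactly $n$. Substituting the values $\#(\Tilde{u}(\DD) \cap \Tilde{Z}_F^\epsilon) = -r$ and $\#(\Tilde{u}(\DD) \cap \Tilde{Z}_{\mathrm{disk}}^\epsilon) = p\,sl_\xi^{\mathbb{Q}}(\gamma)$ produced in the discussion above yields $\mathrm{wind}(\tau_F, \tau_{\mathrm{disk}}) = -r - p\,sl_\xi^{\mathbb{Q}}(\gamma)$, which combines with the formula from Proposition \ref{conleywind} to give the claim.

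The main obstacle I anticipate is sign bookkeeping. The orientations of $L(p,q)$ (fixed by $\partial B^{4}(1)$), of the disk $u(\DD)$, of the boundary of $u(\DD)$ as a $p$-fold cover of $\gamma$, of the co-oriented contact plane $\xi$, and of the angular coordinate $(x,y)$ defining $\tau_F$ all enter the framing/winding identity, and one must verify that the resulting signs are exactly compatible with those built into the definitions of $sl_\xi^{\mathbb{Q}}(\gamma)$ and of the cabling integer $r$. Once these conventions are fixed, the proof is essentially the short calculation displayed above.
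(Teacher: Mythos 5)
Your proposal is correct and follows essentially the same route as the paper: reduce via Proposition \ref{conleywind} to computing $\mathrm{wind}(\tau_F,\tau_{\mathrm{disk}})$, identify this winding with the difference of intersection numbers $\#(\Tilde{u}(\DD)\cap \Tilde{Z}^{\epsilon}_{F})-\#(\Tilde{u}(\DD)\cap \Tilde{Z}^{\epsilon}_{\mathrm{disk}})$ computed in the lifted picture in $S^{3}$, and substitute the values $-r$ and $p\,sl_{\xi}^{\mathbb{Q}}(\gamma)$ established in the preceding discussion. This is exactly the paper's equation (\ref{wind}) followed by Proposition \ref{conleywind}.
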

\begin{proof}[\bf Proof of Lemma \ref{lem;key}]
Take a section $\Tilde{W}_{\mathrm{disk}}:\RR/pT_{\gamma}\ZZ \to (\Tilde{\gamma})^{*}\xi$ so that the map $\Tilde{\gamma}^{*}\xi \ni a\Tilde{Z}_{\mathrm{disk}}+b\Tilde{W}_{\mathrm{disk}}\mapsto a+ib\in \RR^{2}$  gives a  symplectic trivialization. 
Since  $\Tilde{Z}_{\mathrm{disk}}:\RR/pT_{\gamma}\ZZ \to (\Tilde{\gamma})^{*}\xi$ extends globally to a non-vanishing section on $\Tilde{u}^{*}\xi$, the homotopy class of this trivialization is  $\tau_{\mathrm{disk}}$.
In the same way, we  take a section $\Tilde{W}_{F}:\RR/pT_{\gamma}\ZZ \to (\Tilde{\gamma})^{*}\xi$ so that the map $\Tilde{\gamma}^{*}\xi \ni a\Tilde{Z}_{F}+b\Tilde{W}_{F}\mapsto a+ib\in \RR^{2}$  gives a  symplectic trivialization. Since $\Tilde{Z}_{F}:\RR/pT_{\gamma}\ZZ \to (\Tilde{\gamma})^{*}\xi$ corresponds to $\partial_{x}$ with respect to the coordinate induced by $F$, the homotopy class of this trivialization is  $\tau_{F}$. Therefore it follows directly that
\begin{equation}\label{wind}
   \mathrm{wind}(\tau_{F},\tau_{\mathrm{disk}})=\# (\Tilde{u}(\DD)\cap \Tilde{Z}^{\epsilon}_{F})-\# (\Tilde{u}(\DD)\cap \Tilde{Z}^{\epsilon}_{\mathrm{disk}})=-r-psl_{\xi}^{\mathbb{Q}}(\gamma).
\end{equation}
It follows from Proposition \ref{conleywind} that 
\begin{equation}
    \mu_{\tau_{F}}(\gamma^{p})+2\mathrm{wind}(\tau_{F},\tau_{\mathrm{disk}})=\mu_{\tau_{F}}(\gamma^{p})-2r-2psl_{\xi}^{\mathbb{Q}}(\gamma)=\mu_{\mathrm{disk}}(\gamma^{p}).
\end{equation}
This completes the proof.
\end{proof}
Now, we shall complete the proof of Proposition \ref{main;prop}. Suppose that $\gamma \subset L(p,q)$ is hyperbolic. Then according to Proposition \ref{conleybasic},  $\mu_{\tau_{F}}(\gamma^{p})=p\mu_{\tau_{F}}(\gamma)$. Therefore it follows from Lemma \ref{lem;key} that $-p\mu_{\tau_{F}}(\gamma)=-2psl_{\xi}^{\mathbb{Q}}(\gamma)-2r-\mu_{\mathrm{disk}}(\gamma^{p})$. This means that if $-2psl_{\xi}^{\mathbb{Q}}(\gamma)-2r-\mu_{\mathrm{disk}}(\gamma^{p})$ is not divisible by $p$, then $\gamma$ must be elliptic. This completes the proof.

\section{Immersed $J$-holomorphic curves}
In this sectin, we assume that $Y\cong L(3,1)$ and $(Y,\lambda)$ is non-degenerate dynamically convex.  From now, we fix a generic admissible almost complex structure $J$ (c.f. \S\ref{echdef}) and  trivialization $\tau_{\gamma} \in \mathcal{P}(\gamma)$ of the contact surface $\xi$ on each simple orbit $\gamma$.  Let $\tau:=\{ \tau_{\gamma} \}_{\gamma}$.
\begin{prp}\label{prp:immersed}
    Let $u:(\Sigma,j)\to (\RR\times Y,J)$ be an immersed $J$-holomorphic curve with no negative end.   
    \item[(1).] $\ind(u)$ is not equal to $1$.
    \item[(2).] If  $\ind(u)=2$, then $u$ is of genus $0$.
\end{prp}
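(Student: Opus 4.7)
The plan is to establish the stronger statement $\ind(u) \geq 2$, with equality forcing $g(u)=0$; both (1) and (2) then follow immediately. The argument uses the Fredholm index formula, dynamical convexity (which is preserved by the universal cover $S^3\to L(3,1)$), and a careful choice of trivializations adapted to each end of $u$.

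The starting point is the index formula
\[
\ind(u) = 2g(u) - 2 + n + 2c_{\tau}(u) + \sum_{k=1}^{n} \mu_{\tau}(\eta_k),
\]
where $\eta_1,\ldots,\eta_n$ are the positive ends of $u$. Since $u$ has no negative end, the asymptotic classes satisfy $\sum_k [\eta_k] = 0$ in $H_1(L(3,1);\ZZ)=\ZZ/3$. When every $\eta_k$ is contractible, I would take $\tau=\tau_{\disk}$ at each end: then $c_{\tau_{\disk}}(u)=0$ (by capping $u$ with rational Seifert disks, using $H_2(L(3,1);\QQ)=0$), while dynamical convexity gives $\mu_{\disk}(\eta_k)\geq 3$. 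Hence $\ind(u)\geq 2g-2+4n\geq 2$, with equality forcing $g=0$.

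When some $\eta_k$ are non-contractible, I would pass to the universal cover $\rho:S^3\to L(3,1)$. Lifting $\lambda$, $J$, and $u$ produces an immersed $\tilde J$-holomorphic $\tilde u:\Sigma'\to\RR\times S^3$ with no negative end, all of whose ends are contractible in the dynamically convex $3$-sphere; here $\Sigma'\to\Sigma$ is the covering of degree $d\in\{1,3\}$ determined by the kernel of $u_{*}:\pi_1(\Sigma)\to\ZZ/3$. Applying the contractible-ends case to $\tilde u$ yields $\ind(\tilde u)\geq 2g(\Sigma')-2+4n(\Sigma')$. Using $\chi(\Sigma')=d\,\chi(\Sigma)$, the multiplicativity $c_{\tilde\tau}(\tilde u)=d\cdot c_{\tau}(u)$ of the relative Chern number under covers, and the identification $\mu_{\tilde\tau}(\tilde\eta)=\mu_{\tau}(\eta^{3})$ for a lift of a non-contractible end, I would derive an identity of the form
\[
\ind(\tilde u) = d\cdot \ind(u) + \sum_{k\,:\,[\eta_k]\neq 0}\bigl(\mu_{\tau}(\eta_k^{3})-3\mu_{\tau}(\eta_k)\bigr),
\]
whose correction terms lie in $\{-2,0,2\}$ by Proposition \ref{conleybasic}. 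Combining the $S^3$-bound with the explicit expressions $n(\Sigma')=3n_0+n_1$ and $g(\Sigma')$ in terms of $g$ and $n_1$, one concludes $\ind(u)\geq 2$, with equality propagating upward through the identities and forcing $g(\Sigma')=0$, hence $g(\Sigma)=0$.

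The main obstacle is controlling the correction terms $\mu_{\tau}(\eta_k^{3})-3\mu_{\tau}(\eta_k)$, each of which can a priori contribute $+2$, so that a naive application of the estimate only yields $\ind(u)\geq 2/3$. Overcoming this requires exploiting the refined end count $n(\Sigma')=3n_0+n_1$ together with the iteration bound $\mu_{\disk}(\eta_k^{3})\geq 3$; if needed, one can further invoke Lemma \ref{lem;key} from Section 3 to re-express the corrections in terms of the rational self-linking numbers and cabling integers of the non-contractible simple orbits, so that the problematic contribution is absorbed into the Chern term $c_{\tau}(u)$.
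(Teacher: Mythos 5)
Your strategy of lifting to the dynamically convex $S^3$ and comparing $\ind(\tilde u)$ with $3\ind(u)$ is, in substance, the same computation the paper performs by multiplying the index formula by $3$ and using that every $\gamma^3$ is contractible in $L(3,1)$ together with the bound $3\mu_\tau(\gamma)-\mu_\tau(\gamma^3)+3\ge 1$ for elliptic ends. Part (2) does close along your route: if $\ind(u)=2$ and $g(u)\ge 1$, your inequality $6\ge 6g(u)-6+12n_0+4n_1$ forces $n_0=0$ and $n_1\le 1$, which is impossible since a single non-contractible end cannot have vanishing total class in $H_1(L(3,1))\cong\ZZ/3$ and a curve with no ends cannot exist in an exact symplectization.

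For part (1), however, there is a genuine gap, and you have correctly located it without resolving it. Take $g(u)=0$, $n_0=0$, $n_1=2$, with the two non-contractible ends asymptotic to elliptic orbits of classes $1$ and $2$ in $\ZZ/3$ (this configuration really occurs; it is case (5) of Lemma \ref{lem:indechcomp}). Your chain of inequalities gives only $3\ind(u)\ge -6+4\cdot 2=2$, hence $\ind(u)\ge 1$, and neither the refined end count $n(\Sigma')=3n_0+n_1$ nor the bound $\mu_{\disk}(\eta_k^3)\ge 3$ can improve this: both are already used at full strength to obtain the $2/3$. The appeal to Lemma \ref{lem;key} does not help either, since the ends of $u$ need not be $3$-unknotted and that lemma only re-expresses $\mu_{\disk}$ in a different trivialization rather than improving any estimate. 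The missing ingredient is the parity of the Fredholm index: modulo $2$, $\ind(u)\equiv\sum_k(\mu_\tau(\eta_k)+1)$, so $\ind(u)$ is odd if and only if an odd number of ends are asymptotic to positive hyperbolic orbits. Thus $\ind(u)=1$ forces at least one positive hyperbolic end, whose contribution to the tripled inequality is $\mu_{\disk}(\gamma^3)+3\ge 6$ rather than $4$; combined with $9\ge 6g(u)+6k'+4(k-k')$ this forces exactly one end, which is then null-homologous, hence contractible, and $1=2g(u)-1+\mu_{\disk}(\gamma_1)\ge 2$ gives the contradiction. Without this parity step your argument cannot exclude $\ind(u)=1$.
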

\begin{proof}[\bf Proof of Proposition \ref{prp:immersed}]
 Let $g(u)$ denote the genus of $u$.  Since $u$ is immersion, we have
 \begin{equation}\label{eq:ind}
 \begin{split}
     \ind(u)=&-(2-2g(u)-k)+2c_{\tau}(\xi|_{[u]})+\sum_{1\leq i \leq k} \mu_{\tau}(\gamma_{i})\\
     =&-(2-2g(u))+2c_{\tau}(\xi|_{[u]})+\sum_{1\leq i \leq k} (\mu_{\tau}(\gamma_{i})+1)
      \end{split}
 \end{equation}
 where $\{\gamma_{i}\}_{i}$ is the set of periodic orbits to which the ends of $u$ are asymptotic. We may assume that $\gamma_{i}$ is hyperbolic if $1\leq i\leq k'$ and elliptic if $k'+1\leq i \leq k$.
 
 \begin{lem}\label{lem:iterate}
     Let a periodic orbit $\gamma$ be elliptic. For any symplectic trivialization $\tau:\gamma^{*}\xi \to \gamma \times \RR^{2}$ and $p\in \ZZ_{>0}$, we have
\begin{equation} 
         p\mu_{\tau}(\gamma)-\mu_{\tau}(\gamma^{p})+p\geq 1.
 \end{equation}
 \end{lem}
 \begin{proof}[\bf Proof of Lemma \ref{lem:iterate}]
 Let $\theta$ denote the monodolomy angle with respect to $\tau$. Then $\mu_{\tau}(\gamma^{p})=2\lfloor p\theta \rfloor+1$ for any $p\in \ZZ_{>0}$. Since either $\lfloor2 \theta \rfloor=2\lfloor \theta \rfloor+1$ or $\lfloor2 \theta \rfloor=2\lfloor \theta \rfloor$, we have $2\lfloor \theta \rfloor +1 \geq \lfloor2 \theta \rfloor$.  It is easy to check by inducting that $p\lfloor \theta \rfloor +p-1 \geq \lfloor p\theta \rfloor$. This completes the proof.
 \end{proof}
Recall  $Y \cong L(3,1)$. Hence for any periodic orbit $\gamma$, $\gamma^{3}$ is contractible. Let $\alpha$ denote the orbit set to which the positive ends of $u$ are asymptotic. Then $\{[u]\} = H_{2}(Y,\alpha,\emptyset)$.   
Now, we note some obvious facts.  First,  $c_{\tau}(\xi|_{3[u]})=3c_{\tau}(\xi|_{[u]})$ where $c_{\tau}$ is the first relative Chern number and $\{3[u]\} = H_{2}(Y,3\alpha,\emptyset)$.  Next, since any $\gamma_{i}^{3}$ is contractible,  $2c_{\tau}(\xi|_{3[u]})+\sum_{1\leq i \leq k} \mu_{\tau}(\gamma_{i}^{3})=\sum_{1\leq i \leq k} \mu_{\disk}(\gamma_{i}^{3})$.

Based on this understanding, we multiply both sides of (\ref{eq:ind}) by $3$. Then  we have
 \begin{equation}\label{eq:ineq}
 \begin{split}
      3\ind(u)=&-3(2-2g(u))+2c_{\tau}(\xi|_{3[u]})+\sum_{1\leq i \leq k'} (\mu_{\tau}(\gamma_{i}^{3})+3)\\ &+\sum_{k'+1\leq i \leq k} \mu_{\tau}(\gamma_{i}^{3})+\sum_{k'+1\leq i \leq k}( p\mu_{\tau}(\gamma_{i})-\mu_{\tau}(\gamma_{i}^{3})+3)\\
      =&-3(2-2g(u))+\sum_{1\leq i \leq k'}( \mu_{\mathrm{disk}}(\gamma_{i}^{3})+3)\\+&\sum_{k'+1\leq i \leq k} \mu_{\mathrm{disk}}(\gamma_{i}^{3})+\sum_{k'+1\leq i \leq k}( 3\mu_{\tau}(\gamma_{i})-\mu_{\tau}(\gamma_{i}^{3})+3)\\
      \geq &6g(u)-6+\sum_{1\leq i \leq k'}( \mu_{\mathrm{disk}}(\gamma_{i}^{3})+3)+\sum_{k'+1\leq i \leq k}( \mu_{\mathrm{disk}}(\gamma_{i}^{3})+1)
      \end{split}
 \end{equation}
 Here Lemma \ref{lem:iterate} is used.

 Suppose that  $\ind(u)=1$. From (\ref{eq:ineq}), we have
 \begin{equation}
     9\geq 6g(u)+\sum_{1\leq i \leq k'}( \mu_{\mathrm{disk}}(\gamma_{i}^{3})+3)+\sum_{k'+1\leq i \leq k}( \mu_{\mathrm{disk}}(\gamma_{i}^{3})+1)
 \end{equation}
 We note that $\mu_{\mathrm{disk}}(\gamma_{i}^{3})+3$ is at least $6$ and $\sum_{k'+1\leq i \leq k}( \mu_{\mathrm{disk}}(\gamma_{i}^{3})+1)$ is at least $4$ because of dynamical convexity. Since $\mathrm{ind}(u)=1$ is odd, it follows from  (\ref{eq:ind}) that at least one $\gamma_{i}$ must be positive hyperbolic.
 Therefore $k'\geq 1$ and thus $k'=k=1$. This means that the only one orbit $\gamma_{1}$ is contractible. Now we go back to  (\ref{eq:ind}). We have $1=2g(u)-1+\mu_{\mathrm{disk}}(\gamma_{1})$. This does not happen since $\mu_{\mathrm{disk}}(\gamma_{1})\geq 3$ and $g(u)\geq 0$. This proves Proposition \ref{prp:immersed}(1).

 Next we suppose that $\ind(u)=2$ and $g(u)\geq 1$. From (\ref{eq:ineq}), we have
 \begin{equation}
     6\geq \sum_{1\leq i \leq k'}( \mu_{\mathrm{disk}}(\gamma_{i}^{3})+3)+\sum_{k'+1\leq i \leq k}( \mu_{\mathrm{disk}}(\gamma_{i}^{3})+1)
 \end{equation}
 In the same way as above, we have $k=1$ and so from (\ref{eq:ind}) $2=\ind(u)=2g(u)-1+\mu_{\mathrm{disk}}(\gamma_{1})$. But this does not happen since $\mu_{\mathrm{disk}}(\gamma_{1})\geq 3$ and $g(u)\geq 1$. This proves Proposition \ref{prp:immersed}(2).
\end{proof}
\begin{rem}
   In general, the above argument does not work for any $L(p,q)$.
\end{rem}

\begin{prp}\label{prp;main}
    Let $\alpha$ be an ECH generator with $\langle U_{J,z}\alpha,\emptyset \rangle\neq 0$. Then $\alpha$ satisfies one of the following.
    \item[(1).] There are simple elliptic orbits $\gamma_{1},\,\gamma_{2},\,\gamma_{3} \in  \mathcal{S}_{3}$ with $\mu_{\mathrm{disk}}(\gamma^{3}_{i})=3$ for $i=1,2,3$ such that $\alpha=(\gamma_{1},1)\cup(\gamma_{2},1)\cup(\gamma_{3},1)$.
    \item [(2).] There is a simple elliptic orbit $\gamma \in  \mathcal{S}_{3}$ with $\mu_{\disk}(\gamma^{3})=3$ such that $\alpha=(\gamma,3)$.
    \item [(3).]There are  simple elliptic orbits $\gamma_{1},\,\gamma_{2}\in  \mathcal{S}_{3}$ with $\mu_{\disk}(\gamma_{1}^{3})=\mu_{\disk}(\gamma_{2}^{3})=3$  such that $\alpha=(\gamma_{1},2)\cup(\gamma_{2},1)$.
\end{prp}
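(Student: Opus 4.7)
The plan is to extract from the hypothesis $\langle U_{J,z}\alpha, \emptyset\rangle \neq 0$ an embedded genus-zero $J$-holomorphic curve $u$ from $\alpha$ to $\emptyset$, and then to read off the claimed structure from its asymptotic and topological data. First I would invoke Proposition \ref{indexproperties}: a curve $u \in \mathcal{M}^{J}_{2}(\alpha,\emptyset)$ counted by the $U$-map decomposes as $u = u_{0} \cup u_{1}$ with $u_{1}$ embedded of Fredholm index $2$. Since $\emptyset$ has no Reeb orbits, there cannot be any trivial cylinder component carrying a negative end, so $u_{0} = \emptyset$ and $u = u_{1}$ is embedded and somewhere injective. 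Proposition \ref{prp:immersed}(2) then forces the genus of $u$ to be zero. Thus $u$ is a punctured embedded $2$-sphere with positive ends on the simple orbits $\alpha_{i}$ of $\alpha$, and no other ends.

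Next I would use the index formulas and dynamical convexity to constrain each $\alpha_{i}$. Letting $n_{i}^{+}$ be the number of positive ends of $u$ asymptotic to covers of $\alpha_{i}$, the $U$-map $J_{0}$-identity gives
\begin{equation*}
J_{0}(u) = -\chi(u) + \sum_{i}(n_{i}^{+} - 1) = -2 + 2\sum_{i} n_{i}^{+} - \#\{i\}.
\end{equation*}
Combined with the relation $I(u) - J_{0}(u) = 2c_{\tau}(\xi|_{[u]}) + \sum_{i}\mu_{\tau}(\alpha_{i}^{m_{i}})$ and $I(u) = 2$, and after triple-covering $[u]$ so that each $3\alpha_{i}^{m_{i}}$ becomes contractible (allowing passage to $\mu_{\mathrm{disk}}$), this yields an exact accounting of the Conley-Zehnder indices of the iterates. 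Dynamical convexity $\mu_{\mathrm{disk}}(\cdot^{3}) \geq 3$ together with the inequality of Lemma \ref{lem:iterate} and the partition conditions of Proposition \ref{partitioncondition} should force all inequalities to be equalities: each $\alpha_{i}$ must be elliptic, $\mu_{\mathrm{disk}}(\alpha_{i}^{3}) = 3$, and the embedded curve $u$ essentially realises the page of a rational open book whose $3$-fold branched cover is a Seifert disk for $\alpha_{i}^{3}$. A computation of linking numbers of asymptotic push-offs against $u$ then identifies $sl_{\xi}^{\mathbb{Q}}(\alpha_{i}) = -\tfrac{1}{3}$, that is, $\alpha_{i} \in \mathcal{S}_{3}$. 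Ellipticity is then already built in, but can also be recovered independently by applying Theorem \ref{thm:main} with $p = 3$, $q = 1$ to each $\alpha_{i}$.

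It remains to enumerate the multiplicity profiles. Because each $\alpha_{i} \in \mathcal{S}_{3}$ represents a generator of $H_{1}(L(3,1)) = \mathbb{Z}/3$, the condition $[\alpha] = 0$ translates to $\sum_{i} m_{i} \equiv 0 \pmod{3}$. The total multiplicity $\sum_{i} m_{i}$ is bounded by what an index-$2$, genus-zero curve can support given the partitions $P^{\mathrm{out}}_{\theta_{i}}(m_{i})$ at elliptic orbits with $\mu_{\mathrm{disk}}(\alpha_{i}^{3}) = 3$; together with Definition \ref{qdef} (hyperbolic orbits would not occur here since all $\alpha_{i}$ are elliptic, but even for the multiplicities an ECH generator allows, the partition analysis forces $\sum_{i} m_{i} = 3$). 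The three remaining distributions $(1,1,1)$, $(3)$, $(2,1)$ give precisely the cases (1), (2), (3) of the statement.

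The main obstacle will be the precise index bookkeeping needed to force $\mu_{\mathrm{disk}}(\alpha_{i}^{3}) = 3$ and $sl_{\xi}^{\mathbb{Q}}(\alpha_{i}) = -\tfrac{1}{3}$ for \emph{every} simple orbit $\alpha_{i}$ appearing in $\alpha$. A priori one only sees that the total contributions are constrained, and one must rule out trade-offs in which one $\alpha_{i}$ has a larger $\mu_{\mathrm{disk}}$ compensated by another. This is where the embedded, genus-zero structure of $u$ combined with the fine partition conditions of Proposition \ref{partitioncondition} and the rigid behaviour of $\mu_{\tau}$ under iteration (Proposition \ref{conleybasic}, Lemma \ref{lem:iterate}) must be used simultaneously; once each $\alpha_{i}$ is independently shown to sit in $\mathcal{S}_{3}$, Theorem \ref{fundament} and Corollary \ref{main;cor} take over and the rest is combinatorics.
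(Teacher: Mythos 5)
Your opening is consistent with the paper: the curve counted by the $U$-map is embedded of genus zero (Propositions \ref{indexproperties} and \ref{prp:immersed}), and the identity $I(u)-J_{0}(u)=2c_{\tau}(\xi|_{[u]})+\sum\mu_{\tau}$, tripled so that $\mu_{\mathrm{disk}}$ applies, is exactly how the paper begins (Lemma \ref{lem:indechcomp}). But there is a genuine gap at the heart of your argument: you assert that dynamical convexity, Lemma \ref{lem:iterate} and the partition conditions "force all inequalities to be equalities," so that every $\alpha_{i}$ is elliptic with $\mu_{\mathrm{disk}}(\alpha_{i}^{3})=3$ and total multiplicity $3$. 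This is false, and the paper's Lemma \ref{lem:indechcomp} exhibits the surviving counterexamples explicitly: the index bookkeeping alone still permits, among others, $\alpha=(\gamma_{1},2)\cup(\gamma_{2},1)$ with $\mu_{\mathrm{disk}}(\gamma_{1}^{6})=\mu_{\mathrm{disk}}(\gamma_{2}^{3})=5$, a pair $(\gamma_{1},1)\cup(\gamma_{2},1)$ of non-positive-hyperbolic orbits, $(\gamma_{1},2)\cup(\gamma_{2},2)$ (total multiplicity $4$), and a single contractible orbit $(\gamma,1)$. None of these is eliminated by Conley--Zehnder arithmetic, and your appeal to $[\alpha]=0$ presupposes that each $\alpha_{i}$ generates $H_{1}(L(3,1))$, which is part of what must be proved.

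What is missing is the entire second half of the paper's proof: one first shows that $\mathcal{M}^{J}_{u}/\mathbb{R}$ is compact (via SFT compactness and Proposition \ref{prp:immersed}(1)) and invokes Proposition \ref{openbookhut} to conclude that the projected curves form a rational open book decomposition supporting $\xi_{\mathrm{std}}$; then the leftover cases are killed by contact topology, not index theory --- disk pages would force $Y\cong S^{3}$, the $(2,2)$ case fails a gluing-matrix parity check, and annulus pages with multiplicity one at each binding would support an overtwisted structure (Lemmas \ref{lem:ex1}, \ref{lem:ex2}). The membership $\alpha_{i}\in\mathcal{S}_{3}$ is likewise not a "linking number computation of push-offs against $u$" (note that $\pi(u)$ has several boundary orbits and is not a rational Seifert surface for any single $\alpha_{i}$); the paper obtains it by matching the open book, via uniqueness of its monodromy, with explicit Milnor-fibration models $z_{1}^{3}+z_{2}^{3}$ and $z_{1}z_{2}^{2}$ on $(L(3,1),\lambda_{0})$ whose bindings are visibly in $\mathcal{S}_{3}$ (Lemma \ref{lem;(1)}). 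Only after that does a final computation with Lemma \ref{lem;key} exclude the case $\mu_{\mathrm{disk}}(\gamma_{2}^{3})=5$. Without these topological inputs your argument cannot close.
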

There are many steps to prove Proposition \ref{prp;main}. At first, we compute some indices and list the properties of  ECH generators $\alpha$ which satisfy $\langle U_{z,J}\alpha,\emptyset \rangle\neq 0$.
\begin{lem}\label{lem:indechcomp}
    Let $\alpha$ be an ECH generator with $\langle U_{J,z}\alpha,\emptyset \rangle\neq 0$. Then   by conducting some computations regarding indices, we have that any $u\in \mathcal{M}^{J}(\alpha,\emptyset)$ has no two ends asymptotic to the same orbit.  In addition, $\alpha$ satisfies one of the following.
    \item[(1).] There are simple elliptic orbits $\gamma_{1}$, $\gamma_{2}$, $\gamma_{3}$ with $\mu_{\mathrm{disk}}(\gamma^{3}_{i})=3$ for $i=1,2,3$ such that $\alpha=(\gamma_{1},1)\cup(\gamma_{2},1)\cup(\gamma_{3},1)$.
    \item [(2).] There is a simple elliptic orbit $\gamma$ with $\mu_{\disk}(\gamma^{3})=3$ such that $\alpha=(\gamma,3)$.
    \item [(3).]There are simple elliptic orbits $\gamma_{1}$ and $\gamma_{2}$  with $\mu_{\disk}(\gamma_{1}^{3})=\mu_{\disk}(\gamma_{2}^{3})=3$ such that $\alpha=(\gamma_{1},2)\cup(\gamma_{2},1)$.
    \item[(4).]There are  simple elliptic orbits $\gamma_{1}$ $\gamma_{2}$  with $\mu_{\disk}(\gamma_{1}^{6})=5$ and  $\mu_{\disk}(\gamma_{2}^{3})=5$  such that $\alpha=(\gamma_{1},2)\cup(\gamma_{2},1)$.
    \item [(5).]There are  simple  orbits $\gamma_{1}$ and  $\gamma_{2}$ such that $\alpha=(\gamma_{1},1)\cup(\gamma_{2},1)$ and each of them is not positive hyperbolic.
    \item [(6).] There are   simple elliptic orbits $\gamma_{1}$ and  $\gamma_{2}$ such that $\alpha=(\gamma_{1},2)\cup(\gamma_{2},2)$.
    \item[(7).] There is a simple orbit $\gamma$ such that $\alpha=(\gamma,1)$ and $\gamma$ is not positive hyperbolic. 
\end{lem}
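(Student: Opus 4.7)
The plan is to leverage the structural consequences of $\langle U_{J,z}\alpha,\emptyset\rangle \neq 0$ together with the trivialization-independent index identities, the $J_0$ formula, partition conditions, and dynamical convexity, and then enumerate the resulting possibilities. First, I would pick a curve $u \in \mathcal{M}_2^J(\alpha,\emptyset)/\RR$ with $(0,z) \in u$. Since the negative end is empty, no trivial $\RR$-invariant cylinder can sit inside $u$, so Proposition \ref{indexproperties}(4) forces $u = u_1$ to be embedded with $\ind(u)=2$; Proposition \ref{prp:immersed}(2) then gives $g(u)=0$. Writing $\alpha = \{(\alpha_i,m_i)\}$ and letting $e_i$ be the number of ends of $u$ asymptotic to covers of $\alpha_i$ with multiplicities $q_{i,1},\ldots,q_{i,e_i}$ summing to $m_i$, I have the two identities $I(u)=2$ and $\ind(u)=2$, together with the $J_0$ relation for $U$-map curves from Proposition \ref{indexproperties}'s accompanying formula: $-\chi(u)+\sum_i(e_i-1) = J_0(u)$, i.e. $(e-2)+\sum_i(e_i-1)=J_0(u)$ where $e=\sum_i e_i$.

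Next, I would run the same ``multiply by $3$'' trick used in the proof of Proposition \ref{prp:immersed}: because $\pi_1(L(3,1))=\ZZ/3$, every $\alpha_i^3$ is contractible, so the combination $2c_\tau(\xi|_{3[u]})+\sum_i\sum_{k=1}^{m_i}\mu_\tau(\alpha_i^{3k})$ rewrites as a sum of $\mu_{\disk}$ terms, independent of $\tau$. Tripling the $\ind(u)=2$ identity and applying Lemma \ref{lem:iterate} to the elliptic summands converts the relation into an inequality bounding $\sum_i\bigl(\mu_{\disk}(\alpha_i^{3m_i})+\text{const}\bigr)$ by $6$, exactly in the spirit of (\ref{eq:ineq}). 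Dynamical convexity, which yields $\mu_{\disk}(\gamma)\geq 3$ for every contractible orbit, then forces $\sum_i m_i$ to be small (at most $3$) and imposes strong restrictions on each $\mu_{\disk}(\alpha_i^{3m_i})$; simultaneously the ECH-generator condition forbids any hyperbolic $\alpha_i$ from having $m_i>1$.

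With the total multiplicity bounded, I would enumerate the possibilities for the orbit multiset of $\alpha$: a single simple orbit with $m=1$; two simple orbits with $(m_1,m_2)\in\{(1,1),(2,1),(2,2)\}$; and three simple orbits with $(1,1,1)$. For each multiplicity pattern I plug back into the two index identities and the $J_0$ identity, use the $L(3,1)$ triple-cover rewriting to get everything in terms of $\mu_{\disk}$, and impose $[\alpha]=0\in H_1(L(3,1))=\ZZ/3$. The seven cases in the statement are precisely the surviving configurations, with the ellipticity and precise values of $\mu_{\disk}$ (such as $\mu_{\disk}(\gamma^3)=3$ in (1)--(3), or $\mu_{\disk}(\gamma_1^6)=\mu_{\disk}(\gamma_2^3)=5$ in (4)) being read off from the equality cases in the index inequality. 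The ``no two ends at the same orbit'' claim then follows from the outgoing partition conditions of Proposition \ref{partitioncondition}: in each surviving case the monodromy angle $\theta_{\alpha_i}$ and multiplicity $m_i$ are such that $m_i\in S_{-\theta_{\alpha_i}}$, so $P_{\theta_{\alpha_i}}^{\mathrm{out}}(m_i)=\{m_i\}$ and $u_1$ has a single end at $\alpha_i^{m_i}$; this is checked from the fact that $\mu_\tau(\alpha_i^{m_i})$ attains the equality end of Lemma \ref{lem:iterate}.

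The main obstacle will be the bookkeeping of the case analysis: five constraints (the two index equalities, the $J_0$ formula, the homology condition, and the partition condition for each elliptic orbit) must be simultaneously satisfied, and ruling out spurious partitions like $\{1,1\}$ at a doubly-covered elliptic orbit requires comparing $2\mu_\tau(\alpha_i)$ with $\mu_\tau(\alpha_i^2)$ via the monodromy angle and then re-examining the triple-cover inequality; any slack in the inequality that would allow two ends at the same orbit is exactly the slack ruled out by equality in Lemma \ref{lem:iterate}. Extracting this list of seven configurations — rather than a more sprawling list — is where the dynamical-convexity hypothesis is used most crucially.
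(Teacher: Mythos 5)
Your overall strategy coincides with the paper's: use $I(u)=\mathrm{ind}(u)=2$, the $J_{0}$ equality for $U$-map curves, genus $0$ from Proposition \ref{prp:immersed}, the triple-cover trick to rewrite everything in terms of $\mu_{\disk}$, Lemma \ref{lem:iterate}, and dynamical convexity, then enumerate. The resulting inequality $12\geq 6k'+4(k-k')$ bounds the number of distinct simple orbits (weighted by elliptic/hyperbolic type), and the case analysis proceeds as you describe. Two corrections, one minor and one substantive. Minor: your claim that the total multiplicity $\sum_i m_i$ is at most $3$ is false and contradicts your own list --- case (6) is $(\gamma_1,2)\cup(\gamma_2,2)$ with total multiplicity $4$; what the inequality actually bounds is $k$ and $k'$, and the multiplicities are controlled separately through Proposition \ref{conleycovering} and the elementary Claim \ref{cla;essential} about when $\mu_\tau(\gamma^k)\in\{5,7\}$ forces $k\leq 3$.

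The substantive gap is your derivation of ``no two ends asymptotic to the same orbit'' from the partition conditions. To conclude $P_{\theta_{\alpha_i}}^{\mathrm{out}}(m_i)=(m_i)$ you need $m_i\in S_{-\theta_{\alpha_i}}$, where $\theta_{\alpha_i}$ is the monodromy angle of the \emph{simple} orbit. But the index computation only gives you $\mu_{\disk}$ of the contractible covers $\alpha_i^{3m_i}$, which determines $3\theta_{\alpha_i}$ up to an unknown integer Chern correction; the fractional part of $\theta_{\alpha_i}$ itself --- which is exactly what decides whether, say, $2\in S_{-\theta_{\alpha_i}}$ --- is not pinned down at this stage (that relation is only established later via the Martinet-tube/self-linking analysis of Lemma \ref{lem;key}, after one knows $\alpha_i\in\mathcal{S}_3$). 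For instance in case (4), the data $\mu_{\disk}(\gamma_1^{3})=3$, $\mu_{\disk}(\gamma_1^{6})=5$ is compatible a priori with $P^{\mathrm{out}}_{\theta}(2)=(1,1)$. The paper instead proves the claim directly from the tripled index inequality (Claim \ref{cla;n=1}): if some $n_j^{+}>1$ then $3(h-k)\geq 3$ and $3\sum_i(n_i^{+}-1)\geq 3$, which forces $k=1$ with $m_1>1$, and then both the contractible and non-contractible sub-cases violate dynamical convexity together with Proposition \ref{conleycovering}. You should replace the partition-condition argument by this index argument; the partition conditions are used in the paper only later, in the proof of Lemma \ref{lem:openbook}, to verify hypothesis (4) of Proposition \ref{openbookhut}.
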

\begin{proof}[\bf Proof of Lemma \ref{lem:indechcomp}]
Set $\alpha=\{(\gamma_{i},m_{i})\}_{1\leq i \leq k}$. We may assume that $\gamma_{i}$ is hyperbolic for $1\leq i \leq k'$ and elliptic for $k'+1\leq i \leq k$. Of course, $m_{i}=1$ for  $1\leq i \leq k'$ since $\alpha$ is an ECH generator. Let $u\in \mathcal{M}^{J}(\alpha,\emptyset)$ be a $J$-holomorphic curve counted by $\langle U_{z,J}\alpha,\emptyset \rangle \neq 0$. Then we have
\begin{equation}
\begin{split}
     2c_{\tau}(\xi|_{[u]})+\sum_{1\leq i \leq k} \mu_{\tau}(\gamma_{i}^{m_{i}})=&I(u)-J_{0}(u)\\=&2-(-\chi(u)+\sum_{1\leq i \leq k}(n_{i}^{+}-1))
     \\=&2-(2g(u)-2+h+\sum_{1\leq i \leq k}(n_{i}^{+}-1)).
\end{split}
\end{equation}
Here $h$ is the number of the positive ends of $u$ and $n_{i}^{+}$ is the number of the positive ends asymptotic to $\gamma_{i}$. It follows from Proposition \ref{prp:immersed} (2) that $g(u)=0$. Hence  we have
\begin{equation}
\begin{split}
    4=h+\sum_{1\leq i \leq k}(n_{i}^{+}-1)+2c_{\tau}(\xi|_{[u]})+\sum_{1\leq i \leq k} \mu_{\tau}(\gamma_{i}^{m_{i}})
    \end{split}
\end{equation}
Now, we conduct  similar calculations with the proof of Proposition \ref{prp:immersed}. Multiplying both side by $3$, it follows from Lemma \ref{lem:iterate} that
\begin{equation}\label{ineqcalc}
    \begin{split}
           12=&3h+3\sum_{1\leq i \leq k}(n_{i}^{+}-1)+2c_{\tau}(\xi|_{3[u]})+\sum_{1\leq i \leq k'} \mu_{\tau}(\gamma_{i}^{3})\\+&\sum_{k'+1\leq i \leq k} \mu_{\tau}(\gamma_{i}^{3m_{i}})+\sum_{k'+1\leq i \leq k}( 3\mu_{\tau}(\gamma_{i}^{m_{i}})-\mu_{\tau}(\gamma_{i}^{3m_{i}}))\\=&3h+3\sum_{1\leq i \leq k}(n_{i}^{+}-1)+\sum_{1\leq i \leq k'} (\mu_{\disk}(\gamma_{i}^{3})+3)\\+&\sum_{k'+1\leq i \leq k} \mu_{\disk}(\gamma_{i}^{3m_{i}})+\sum_{k'+1\leq i \leq k}( 3\mu_{\tau}(\gamma_{i}^{m_{i}})-\mu_{\tau}(\gamma_{i}^{3m_{i}})+3)-3k\\ \geq& 3(h-k)+3\sum_{1\leq i \leq k}(n_{i}^{+}-1)\\+&\sum_{1\leq i \leq k'} (\mu_{\disk}(\gamma_{i}^{3})+3)+\sum_{k'+1\leq i \leq k}( \mu_{\disk}(\gamma_{i}^{3m_{i}})+1).
    \end{split}
\end{equation}
\begin{cla}\label{cla;n=1}
    $n_{i}^{+}=1$ for any $1\leq i \leq k$. That is, the number of the ends of $u$ asymptotic to $\gamma_{i}$ is $1$ for any $1\leq i \leq k$. This means $h=k$ and $\sum_{1\leq i \leq k}(n_{i}^{+}-1)=0$ in (\ref{ineqcalc}).
\end{cla}
\begin{proof}[\bf Proof of Claim \ref{cla;n=1}]
    We prove this by contradiction. Assume that $n^{+}_{j}>1$ for some $k'+1\leq j\leq k$. Then $3(h-k)\geq 3$ and $3\sum_{1\leq i \leq k}(n_{i}^{+}-1)\geq 3$.  Therefore we  have
    \begin{equation}\label{equat}
 6\geq \sum_{1\leq i \leq k'}(\mu_{\disk}(\gamma_{i}^{3})+3)+\sum_{k'+1\leq i \leq k}( \mu_{\disk}(\gamma_{i}^{3m_{i}})+1).
  \end{equation}
  Moreover $m_{j}>1$. It follows easily from (\ref{equat}) that $k=1$. Indeed If $k>1$,  the right hand side of (\ref{equat}) is at least $8$. Hence $k=j=1$.
  If $\gamma_{1}$ is contractible, $\mu_{\disk}(\gamma^{3m_{1}})\geq 6m_{1}+1\geq 13$. This is a contradiction. So $\gamma_{1}$ must be non-contractible. 
  Suppose that $\gamma$ is non-contractible. Since $[\alpha]=m_{1}[\gamma_{1}]$ must be zero in $H_{1}(L(3,1))$,  $m_{1}$ is divisible by $3$. Write $m_{1}=3m'_{1}$. We have $\mu_{\disk}(\gamma^{3m_{1}})+1=\mu_{\disk}((\gamma^{3})^{3m'_{1}})+1\geq 6m'_{1}+1+1\geq 8$ (Proposition \ref{conleybasic}). This contradicts (\ref{equat}). In summary, we have  $n^{+}_{i}=1$ for any $i$. This completes the proof.
\end{proof}
Having Claim \ref{cla;n=1},  it follows from (\ref{ineqcalc}) that
\begin{equation}\label{ineqkk'}
\begin{split}
     12\geq& \sum_{1\leq i \leq k'}(\mu_{\disk}(\gamma_{i}^{3})+3)+\sum_{k'+1\leq i \leq k}( \mu_{\disk}(\gamma_{i}^{3m_{i}})+1)\\
     \geq& 6k'+4(k-k').
     \end{split}
\end{equation}
The pair $(k-k',k')$ must satisfy (\ref{ineqkk'}). Thus  $(k-k',k')$ is one of the following. $(k-k',k')=(3,0),(0,2),(1,1),(2,0),(0,1),(1,0)$. We check their properties one by one.

The next claim is obvious but it is worth to be mentioned explicitly for further arguments.

\begin{cla}\label{cla;essential}
    Suppose that $\mu_{\tau}(\gamma)\geq3 $. If $\mu_{\tau}(\gamma^{k})=5$ or $7$ for $k>1$, then $\mu_{\tau}(\gamma)=3$. In addition if $\mu_{\tau}(\gamma^{k})=5$ for $k>1$, then $k=2$ and  if $\mu_{\tau}(\gamma^{k})=7$ for $k>1$, then either $k=2$ or $k=3$.
\end{cla}
\begin{proof}[\bf Proof of Claim \ref{cla;essential}]
Note that  $\gamma$ is elliptic if $\mu_{\tau}(\gamma^{k})=5$ or $7$ for $k>1$. Indeed   if $\gamma$ is hyperbolic, then $\mu_{\tau}(\gamma^{k})=k\mu_{\tau}(\gamma)$ (c.f. Proposition \ref{conleybasic}), but since 5 and 7 are prime, this is a contradiction.  
Since $\gamma$ is elliptic, there is $\theta \in  \RR \backslash \QQ$ such that $\mu_{\tau}(\gamma^{k})=2\lfloor k\theta \rfloor+1$ for any $k\geq1$ (Proposition \ref{conleybasic}). 
If $\mu_{\tau}(\gamma)\geq 5$, then $\theta\geq 2$ because $\mu_{\tau}(\gamma)=2 \lfloor \theta \rfloor+1\geq 5$.
Therefore $\mu_{\tau}(\gamma^{k})=2\lfloor k \theta \rfloor+1\geq 4k+1$ (c.f. Proposition \ref{conleycovering}). But any $k>1$ does not satisfy $\mu_{\tau}(\gamma^{k})=5$ and $7$. This is a contradiction. Thus we have $\mu_{\tau}(\gamma)=3$. 

If $\mu_{\tau}(\gamma^{k})=5$ for $k>1$,  $k=2$ follows directly  from Proposition \ref{conleycovering}. In the same way, we have if $\mu_{\tau}(\gamma^{k})=7$ for $k>1$, then either $k=2$ or $k=3$.
\end{proof}

\item[(a) If $(k-k',k')=(3,0)$.]  

In this case, we have $\alpha=(\gamma_{1},m_{1})\cup (\gamma_{2},m_{2})\cup(\gamma_{3},m_{3})$ for some elliptic orbits $\gamma_{i}$. Moreover from (\ref{ineqkk'}), $\mu_{\disk}(\gamma_{i}^{3m_{i}})=3$ for $i=1,2,3$. This implies that $m_{i}=1$ (Proposition \ref{conleycovering}) and thus $\mu_{\disk}(\gamma_{i}^{3})=3$ for $i=1,2,3$. Thus $\alpha$ satisfies  (1) in Lemma \ref{lem:indechcomp}.

\item[(b) If $(k-k',k')=(0,2)$.]  In this case, we have $\alpha=(\gamma_{1},1)\cup (\gamma_{2},1)$ with $\mu_{\disk}(\gamma_{1}^3)=\mu_{\disk}(\gamma_{2}^3)=3$. Thus $\gamma_{1}$ and $\gamma_{2}$ are negative hyperbolic, and $\alpha$ satisfies  (4) in Lemma \ref{lem:indechcomp}.

\item[(c) If $(k-k',k')=(1,1)$.]
In this case,  $\alpha=(\gamma_{1},1)\cup (\gamma_{2},m_{2})$ for elliptic $\gamma_{2}$ and 
negative hyperbolic $\gamma_{1}$.  Since $ 12\geq (\mu_{\disk}(\gamma_{1}^{3})+3)+( \mu_{\disk}(\gamma_{2}^{3m_{2}})+1)$, it follows from dynamical convexity that either $(\mu_{\disk}(\gamma_{1}^{3}), \mu_{\disk}(\gamma_{2}^{3m_{2}}))=(3,5)$ or $(5,3)$ or $(3,3)$. 

Suppose that $(\mu_{\disk}(\gamma_{1}^{3}), \mu_{\disk}(\gamma_{2}^{3m_{2}}))=(3,5)$. Then $m_{2}=1$ or $2$ (Clam \ref{cla;essential}). If  $m_{2}=1$, $\alpha$ satisfies  Lemma \ref{lem:indechcomp} (5).  If  $m_{2}=2$, $ \mu_{\disk}(\gamma_{2}^{3m_{2}})=\mu_{\disk}((\gamma_{2}^{3})^{2}))=5$ implies that $\mu_{\disk}(\gamma_{3})=3$ (Claim \ref{cla;essential}) and thus  $\alpha=(\gamma_{1},1)\cup (\gamma_{2},2)$ satisfies Lemma \ref{lem:indechcomp} (3).

Suppose that $(\mu_{\disk}(\gamma_{1}^{3}), \mu_{\disk}(\gamma_{2}^{3m_{2}}))=(5,3)$ or $(3.3)$. $\mu_{\disk}(\gamma_{2}^{3m_{2}})=3$ implies that $m_{1}=1$ (Proposition \ref{conleycovering}) and thus $\alpha=(\gamma_{1},1)\cup (\gamma_{2},1)$ satisfies  (5) in Lemma \ref{lem:indechcomp}.

\item[(d) If $(k-k',k')=(2,0)$.]
In this case, we have $\alpha=(\gamma_{1},m_{1})\cup (\gamma_{2},m_{2})$ for elliptic orbits $\gamma_{1}$ and $\gamma_{2}$. It follows from (\ref{ineqkk'}) that  $ 10\geq \mu_{\disk}(\gamma_{1}^{3m_{1}})+\mu_{\disk}(\gamma_{2}^{3m_{2}})$. Without of loss generality, we may assume that $\mu_{\disk}(\gamma_{1})\geq \mu_{\disk}(\gamma_{2})$. Then  we have  $(\mu_{\disk}(\gamma_{1}^{3m_{1}}), \mu_{\disk}(\gamma_{2}^{3m_{2}}))=(7,3),(5,5),(5,3),(3,3)$. 

Suppose that $(\mu_{\disk}(\gamma_{1}^{3m_{1}}), \mu_{\disk}(\gamma_{2}^{3m_{2}}))=(7,3)$. Then $m_{2}=1$ and $\mu_{\disk}(\gamma_{2})=3$ (Proposition \ref{conleycovering}) and in addition either $m_{1}=1$ or $m_{1}=2$ or $m_{1}=3$ (Claim \ref{cla;essential}). If $m_{1}=1$, then $\alpha$ satisfies (5) in Lemma \ref{lem:indechcomp}. If $m_{1}=2$, then $\mu_{\disk}(\gamma_{2}^3)=3$ (Claim \ref{cla;essential}) and thus $\alpha$ satisfies (3) in Lemma \ref{lem:indechcomp}.  If $m_{1}=3$, then since $[\alpha]=m_{1}[\gamma_{1}]+[\gamma_{2}]=0$, $[\gamma_{2}]=0$ and thus $\gamma_{2}$ is contractible. But this is a contradiction because $\mu_{\disk}(\gamma_{2}^{3})\geq 2\times 3+1=7$ (Proposition \ref{conleycovering}).

Suppose that $(\mu_{\disk}(\gamma_{1}^{3m_{1}}), \mu_{\disk}(\gamma_{2}^{3m_{2}}))=(5,5)$. Then it follows from Claim \ref{cla;essential} that either $m_{i}=1$ or $2$ for each $i=1,2$ and in addition if $m_{i}=2$, then $\mu_{\disk}(\gamma_{i}^3)=3$. This means that  $\alpha$ satisfies either (4) or (5) or (6) in Lemma \ref{lem:indechcomp}. 

Suppose that $(\mu_{\disk}(\gamma_{1}^{3m_{1}}), \mu_{\disk}(\gamma_{2}^{3m_{2}}))=(5,3)$.  Then $m_{2}=1$ and $\mu_{\disk}(\gamma_{2})=3$ (Proposition \ref{conleycovering}) and in addition either $m_{1}=1$ or $m_{1}=2$. If $m_{1}=1$, then $\alpha$ satisfies (5) in Lemma \ref{lem:indechcomp}. If $m_{1}=2$, then $\mu_{\disk}(\gamma_{2}^3)=3$ (Claim \ref{cla;essential}) and thus $\alpha$ satisfies (3) in Lemma \ref{lem:indechcomp}.

Suppose that $(\mu_{\disk}(\gamma_{1}^{3m_{1}}), \mu_{\disk}(\gamma_{2}^{3m_{2}}))=(3,3)$. Then $m_{1}=m_{2}=1$. Thus $\alpha$ satisfies (5) in Lemma \ref{lem:indechcomp}.

\item[(e) If $(k-k',k')=(0,1)$.]
In this case, we have $\alpha=(\gamma_{1},1)$ for a negative hyperbolic $\gamma_{1}$ and thus  $\alpha$ satisfies  (7)  in Lemma \ref{lem:indechcomp}.
\item[(f) If $(k-k',k')=(1,0)$.]
In this case, $\alpha=(\gamma_{1},m_{1})$ for some $m_{1}$ and elliptic $\gamma$. Suppose that $\gamma_{1}$ is contractible. Then $\mu_{\disk}(\gamma_{1}^{m_{1}})\geq 2m_{1}+1$ (Proposition \ref{conleycovering}). If $m_{1}\geq 2$, we have $\mathrm{ind}(u)=\mu_{\disk}(\gamma_{1}^{m_{1}})-1\geq 4$. This contradicts $\mathrm{ind}(u)=2$. Therefore $m_{1}=1$ and $\alpha$ satisfies (7). 

Next, suppose that $\gamma_{1}$ is not contractible. Since $[\alpha]=0$, $m_{1}=3m_{1}'$ for some $m_{1}'\in \ZZ_{>0}$. Therefore, we have $\mathrm{ind}(u)=\mu_{\disk}((\gamma_{1}^3)^{m_{1}'})-1\geq 2m_{1}'$. This implies that $m_{1}'=1$ and $\mu_{\disk}(\gamma_{1}^3)=3$. Hence we have $\alpha=(\gamma_{1},3)$, which satisfies (2).

In summary. we complete the proof of Lemma \ref{lem:indechcomp}.
\end{proof}

\section{Rational open book decompositions and binding orbits}
In this section, we narrow down the list in Lemma \ref{lem:indechcomp}. For the purpose, we observe topological properties of the moduli space of holomorphic curves.  
For $u\in \mathcal{M}^{J}(\alpha,\emptyset)$, let $\mathcal{M}^{J}_{u}$ denote the connected component of $\mathcal{M}^{J}(\alpha,\emptyset)$ containing $u$. The next lemma plays important roles in what follows.
\begin{lem}\label{lem:openbook}
     Let $\alpha$ be an ECH generator with $\langle U_{J,z}\alpha,\emptyset \rangle\neq 0$. Suppose that $u\in \mathcal{M}^{J}(\alpha,\emptyset)$ is a $J$-holmorphic curve counted by $U_{J,z}$.  Then the quotient space $\mathcal{M}^{J}_{u}/\RR$ is compact and thus diffeomorphic to $S^{1}$.  In addition, for any section $s:\mathcal{M}^{J}_{u}/\RR \to \mathcal{M}^{J}_{u}$,  $\bigcup_{t\in S^{1}}\mathrm{\pi}(s(t))$ gives  a rational open book decomposition on $Y \cong L(3,1)$ where $\mathrm{\pi}:\RR \times Y \to Y$ is the projection.
\end{lem}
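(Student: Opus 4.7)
My plan splits into three stages: identify $\mathcal{M}_u^J/\RR$ as a smooth $1$-manifold, prove its compactness, and then verify the four defining properties of a rational open book.

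For the first stage, I would begin by noting that since $u$ has no negative ends it cannot contain any $\RR$-invariant cylinder component, so the decomposition $u=u_0\cup u_1$ of Proposition \ref{indexproperties} reduces to $u=u_1$. Proposition \ref{indexproperties}(4) then gives that $u$ is embedded with $\ind(u)=I(u)=2$, and Proposition \ref{prp:immersed}(2) gives genus zero. For generic admissible $J$ the local deformation space of $u$ as a somewhere-injective curve is therefore a smooth $2$-manifold; the $\RR$-action on it is free because $u$ is not an $\RR$-invariant cylinder, and $\mathcal{M}_u^J/\RR$ is connected by definition of the component.

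For the second stage, I would take a divergent sequence $[u_n]\in \mathcal{M}_u^J/\RR$ and pass to an SFT limit in $\RR\times Y$, obtaining a holomorphic building $(v^{(1)},\ldots,v^{(k)})$ whose collective positive ends match those of $u$ and whose bottom has no negative ends. Additivity gives $\sum_i I(v^{(i)})=I(u)=2$, and each $I(v^{(i)})\geq 0$ by the non-negativity half of Proposition \ref{indexproperties}(1), once one verifies that every intermediate orbit set between consecutive levels is an ECH generator; this is automatic from Lemma \ref{lem:indechcomp}, which classifies exactly which orbits and multiplicities can show up. A non-trivial breaking would distribute the index as $1+1$, forcing the bottom level to be an $I=1$ configuration with no negative ends whose non-trivial part has Fredholm index $1$ by Proposition \ref{indexproperties}(3), directly contradicting Proposition \ref{prp:immersed}(1). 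Hence no breaking occurs, $\mathcal{M}_u^J/\RR$ is compact, and as a connected compact $1$-manifold without boundary it is diffeomorphic to $S^1$.

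For the final stage, I would fix a continuous section $s:S^1\to \mathcal{M}_u^J$, set $B:=\bigcup_i \gamma_i$ where $\{\gamma_i\}$ are the simple orbits underlying $\alpha$, and verify the four axioms of a rational open book for the family $\{\pi(s(t))\}_{t\in S^1}$ with binding $B$. Transversality of each page to $X_\lambda$ away from $B$ is the standard $J$-holomorphic argument: if $X_\lambda$ were tangent to $\pi(s(t))$ at an interior point then $J$-invariance of $Ts(t)$ together with $J\partial_s = X_\lambda$ would force $d\pi|_{Ts(t)}$ to collapse onto $\langle X_\lambda\rangle$, contradicting rank two. Pairwise disjointness of interior pages follows from intersection positivity combined with the vanishing of the algebraic intersection of two distinct curves in the same connected embedded index-$2$ component: an interior intersection $\pi(s(t))\cap \pi(s(t'))\neq\emptyset$ for $t\neq t'$ would produce a positive algebraic intersection of $s(t)$ with some $\RR$-translate of $s(t')$, contradicting the Siefring-type writhe computation encoded in the ECH index. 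Asymptotic spiralling of each page into $B$ is controlled by the partition conditions (Proposition \ref{partitioncondition}) together with Theorem \ref{thm:main}, which fix the local winding around each binding orbit to the value prescribed by a rational open book on $L(3,1)$. Finally, that the pages cover all of $Y\setminus B$ is forced by the hypothesis $\langle U_{J,z}\alpha,\emptyset\rangle\neq 0$: at each generic $z$ some page passes through $z$, and disjointness together with a mapping-degree argument on $S^1\times \text{(page interior)}\to Y\setminus B$ upgrades pointwise coverage to a foliation.

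The main obstacle I expect is the disjointness step, which rests on a Siefring-style asymptotic-intersection and writhe computation for two distinct curves in the same index-$2$ family; the input from Lemma \ref{lem:indechcomp} on allowed partition types and from Theorem \ref{thm:main} on ellipticity of the binding orbits is essential for making these asymptotic contributions explicit. The compactness step, by contrast, is a clean combination of ECH index positivity with the index obstruction already established in Proposition \ref{prp:immersed}(1).
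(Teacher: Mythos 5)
Your compactness argument is essentially the paper's: the paper also rules out SFT breaking by noting that additivity of the ECH index would force a level with $I=1$ and no negative ends, whose nontrivial part is embedded of Fredholm index $1$ by Proposition \ref{indexproperties}(3), contradicting Proposition \ref{prp:immersed}(1). (One small correction there: Proposition \ref{indexproperties}(1) gives $I\geq 0$ for arbitrary currents, so you do not need the intermediate orbit sets to be ECH generators, and Lemma \ref{lem:indechcomp} would not give you that anyway since it only classifies the top orbit set $\alpha$.)

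Where you genuinely diverge is the open-book part. The paper does not reprove that the family of projected curves forms a rational open book; it quotes Proposition \ref{openbookhut} from \cite{CHP} and verifies its five hypotheses: embeddedness, genus $0$, $\mathrm{ind}=2$, no positive hyperbolic ends, no two ends at covers of the same orbit (all from Lemma \ref{lem:indechcomp} and Proposition \ref{prp:immersed}), the compactness of $\mathcal{M}^{J}_{u}/\RR$, and crucially the gcd condition: if an end is at an $m$-fold cover of $\gamma$ with monodromy angle $\theta$, then $\gcd(m,\lfloor m\theta\rfloor)=1$. The paper gets this last condition from the partition conditions (Proposition \ref{partitioncondition}), which force $m\in S_{-\theta}$, together with the elementary observation (Claim \ref{gcd}) that elements of $S_{-\theta}$ automatically satisfy the gcd identity. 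Your proposal instead attempts to rebuild the conclusion of \cite{CHP} from scratch, and the two hardest steps --- pairwise disjointness of pages via a Siefring-type asymptotic intersection/writhe computation, and the degree argument showing the pages foliate $Y\setminus B$ --- are left as gestures; these are precisely the technical content of the cited result, so as written this part is a gap rather than an alternative proof. You also invoke Theorem \ref{thm:main} to control the local winding of pages around the binding, but that theorem is an ellipticity criterion and plays no role here; the correct input is the gcd condition above, which you never explicitly check. If you replace your third stage by the citation of Proposition \ref{openbookhut} and add the verification of its hypothesis (4) via the partition conditions, your argument coincides with the paper's.
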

To prove Lemma \ref{lem:openbook}, we introduce the following proposition given in \cite{CHP}.
\begin{prp}[\cite{CHP}]\label{openbookhut}
 Let $(Y,\lambda)$ be a non-degenerate contact three-manifold, and let
$J$ be a compatible almost complex structure on $\RR \times Y$. Let $C$ be an irreducible
$J$-holomorphic curve in $\RR \times Y$ such that:
\item[(1).]  Every $C\in \mathcal{M}^{J}_{C}$ is embedded in $\RR \times Y$.
\item[(2).]  $C$ is of genus $0$, has no end asymptotic to a positive hyperbolic orbit and $\mathrm{ind}(C)=2$.
\item[(3).] $C$ does not have two positive ends, or two negative ends, at covers of the same simple Reeb orbit.
\item[(4).] Let $\gamma$ be a simple orbit with rotation nnumber $\theta \in \RR \backslash \QQ$.  If $C$ has a positive end at an $m$-fold cover of $\gamma$, then $\mathrm{gcd}(m,\lfloor m\theta \rfloor)=1$. If $C$ has a negative end at an $m$-fold cover of $\gamma$, then $\mathrm{gcd}(m,\lceil m\theta \rceil)=1$.
\item[(5).]$\mathcal{M}^{J}_{C}/\RR$ is compact.

Then $\pi(C)\subset Y$ is a global surface of section for the Reeb flow.  In addition, for any section $s:\mathcal{M}^{J}_{C}/\RR \to \mathcal{M}^{J}_{C}$,  $\bigcup_{t\in S^{1}}\pi(s(t))$ gives  a rational open book decomposition supporting the contact structure $\mathrm{Ker}\lambda=\xi$.
\end{prp}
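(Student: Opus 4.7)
The plan is to exhibit an $S^1$-family of pages by deforming $C$ inside its moduli component and to show that the projections of distinct members of the family foliate $Y$ away from the underlying simple orbits of the ends of $C$, which collectively form the binding $\mathcal{B}$. First, by hypotheses (1), (2), and genericity of $J$, $\mathcal{M}^{J}_{C}$ is a smooth $2$-manifold near $C$; hypothesis (5) then makes the quotient $\mathcal{M}^{J}_{C}/\RR$ a compact smooth $1$-manifold, which must be $S^1$ by connectedness of the component. This produces a smoothly varying family $\{C_{t}\}_{t\in S^{1}}$ of embedded, genus-$0$ $J$-holomorphic curves, all with the same asymptotic data as $C$.

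The main technical step is to control how two distinct members $C_{t_1}, C_{t_2}$ of the family meet in $\RR\times Y$ and after projection to $Y$. Using the Siefring--Wendl intersection theory for punctured holomorphic curves, the generalized intersection number $C_{t_1}\ast C_{t_2}$ decomposes as an interior intersection count plus asymptotic contributions at every pair of ends approaching the same simple orbit. Hypothesis (3) ensures that for each asymptotic orbit there is at most one positive and at most one negative end of $C$ (hence of each $C_{t}$) approaching it; hypothesis (4) is tailored so that the integers $\lfloor m\theta \rfloor$ and $\lceil m\theta \rceil$ governing the asymptotic winding are coprime to the multiplicity $m$, which forces the leading asymptotic eigenfunctions at matched ends to wind around each other trivially. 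Combining this with positivity of intersections yields $C_{t_1}\cap C_{t_2}=\emptyset$ in $\RR\times Y$ for $t_1\neq t_2$, and moreover that $\pi(C_{t_1})$ and $\pi(C_{t_2})$ remain disjoint in $Y\setminus\mathcal{B}$.

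Next I would verify that each page $\pi(C_{t})$ is transverse to the Reeb flow on $Y\setminus\mathcal{B}$ and approaches $\mathcal{B}$ in the standard rational open book model. Transversality follows immediately from $J(\partial_{s})=X_{\lambda}$ together with $J$-holomorphicity and embeddedness of $C_{t}$: the plane $T\pi(C_{t})$ cannot contain $X_{\lambda}$. Near a simple orbit $\gamma\subset\mathcal{B}$, the Hofer--Wysocki--Zehnder asymptotic analysis (refined by Siefring), combined again with (4), shows that the end of $C_{t}$ at an $m$-fold cover of $\gamma$ projects in a Martinet tube around $\gamma$ as a standard $(m,k)$-cable, so the $S^{1}$-family of pages fits together into the local model of a rational open book near $\gamma$.

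Finally, assembling the family gives a proper local diffeomorphism from the total space of the family (with asymptotic ends removed) to $Y\setminus\mathcal{B}$ by projection. Properness is inherited from hypothesis (5), and the disjointness established above makes it a covering map; its degree equals the algebraic intersection number of a short Reeb arc with a single page, which is $+1$ by $J$-holomorphicity and the orientation convention. Hence the map is a diffeomorphism, and together with the local model around $\mathcal{B}$ this yields the rational open book decomposition supporting $\xi$. The main obstacle will be the bookkeeping in the second step: translating Siefring's algebraic intersection data into honest disjointness of projections in $Y$ uses hypotheses (3) and (4) in an essential way, and is the heart of the CHP criterion.
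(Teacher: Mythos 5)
This proposition is imported verbatim from \cite{CHP}; the paper gives no proof of it, so there is nothing internal to compare against. Judged on its own terms, your outline follows the right general strategy (index-$2$ family forming an $S^1$ of pages, Siefring-type intersection theory to separate pages, asymptotic analysis near the binding), but it contains a genuine gap at the transversality step. You assert that transversality of $\pi(C_{t})$ to the Reeb flow ``follows immediately from $J(\partial_{s})=X_{\lambda}$ together with $J$-holomorphicity and embeddedness.'' This is false as stated: an embedded $J$-holomorphic curve can be tangent to $\mathrm{span}(\partial_{s},X_{\lambda})$ at isolated points (a trivial cylinder is tangent to it everywhere, and for a non-trivial curve the projection $\pi_{\xi}\circ du$ of $du$ to $\xi$ is either identically zero or vanishes at isolated points of positive order). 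Ruling out those isolated tangencies is exactly the Hofer--Wysocki--Zehnder winding-number argument: one shows $\mathrm{wind}_{\pi}(C_{t})=0$ using $\mathrm{ind}=2$, genus $0$, and the fact that no end is asymptotic to a positive hyperbolic orbit (so every end has odd Conley--Zehnder index). It is telling that your write-up never uses the ``no positive hyperbolic ends'' clause of hypothesis (2) --- that hypothesis lives precisely in this step, and without it the statement would fail.

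A second, smaller gap: establishing that each individual page $\pi(C_{t})$ is \emph{embedded} in $Y\setminus\mathcal{B}$ requires showing $C_{t}$ is disjoint from all of its own $\RR$-translates, not only from representatives $C_{t'}$ with $t'\neq t$ in the quotient $\mathcal{M}^{J}_{C}/\RR$; your disjointness statement is phrased only for distinct points of the quotient. This is handled by the same intersection-theoretic machinery (the vanishing of the relevant generalized self-intersection number, again using (3), (4) and $\mathrm{wind}_{\pi}=0$), but it must be stated and proved separately. Finally, note that the proposition as written does not assume $J$ generic, so the smoothness of $\mathcal{M}^{J}_{C}$ should be derived from automatic transversality for embedded index-$2$ genus-$0$ curves in dimension four rather than from ``genericity of $J$'' as you invoke.
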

\begin{proof}[\bf Proof of Lemma \ref{lem:openbook}]
It follows from Lemma \ref{lem:indechcomp} that any $u\in \mathcal{M}^{J}(\alpha,\emptyset)$ counted by $U_{J,z}$ satisfies (1), (2), (3) in Proposition \ref{openbookhut}. To prove that $u\in \mathcal{M}^{J}(\alpha,\emptyset)$ satisfies (4) in Proposition \ref{openbookhut}, we recall the following property.
\begin{cla}\label{gcd}
Let $\theta\in \RR \backslash \ZZ$. For any $q\in S_{-\theta}$ , $\mathrm{gcd}(q,\lfloor q\theta \rfloor)=1$.  
\end{cla}
\begin{proof}[\bf Proof of Claim \ref{gcd}]
Claim \ref{gcd} follows directly from the definition of $S_{\theta}$. Here, note that $-\lfloor\theta q \rfloor=\lceil -q \theta \rceil$.
\end{proof}
According to Proposition \ref{partitioncondition}, if an end  of $u\in \mathcal{M}^{J}(\alpha,\emptyset)$ is asymptotic to simple orbit $\gamma$ with some multiplicity, the multiplicity is in $S_{-\theta}$ where $\theta$ is the monodromy angle of $\gamma$. Therefore it follows that$u\in \mathcal{M}^{J}(\alpha,\emptyset)$ satisfies  (4) in Proposition \ref{openbookhut}.

At last, we check that $u\in \mathcal{M}^{J}(\alpha,\emptyset)$ satisfies (5) in Proposition \ref{openbookhut}.
Suppose that $\mathcal{M}^{J}_{u}/\mathbb{R}$ is not compact. Let $\overline{\mathcal{M}^{J}_{u}/\mathbb{R}}$ denote the compactified space  of $\mathcal{M}^{J}_{u}/\mathbb{R}$ in the sense of SFT compactness. Choose $\Bar{u}\in \overline{\mathcal{M}^{J}_{u}/\mathbb{R}} \backslash (\mathcal{M}^{J}_{u}/\mathbb{R})$. $\Bar{u}$ consists of some $J$-holomorphic curves in several floors. Let $u'$ be the component of $\Bar{u}$ in the lowest floor. Then there is an orbit set $\beta$ such that $u'\in \mathcal{M}^{J}(\beta,\emptyset)/\mathbb{R}$. By the additivity of ECH index, we have $I(\beta,\emptyset)=1$. This contradicts Lemma \ref{prp:immersed}. Thus $\mathcal{M}^{J}_{u}/\mathbb{R}$ is compact.    
\end{proof}

\begin{lem}\label{lem:ex1}
  $L(3,1)$ does not admit rational open book decompositions coming from  (6), (7) in Lemma \ref{lem:indechcomp}.
\end{lem}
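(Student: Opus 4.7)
My plan is to rule out cases (6) and (7) separately, via different topological arguments.

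For case (7), the ECH generator is $(\gamma,1)$ with $\gamma$ a simple, non-positive-hyperbolic, contractible orbit, so by Lemma \ref{lem:indechcomp} the $U$-map curve $u$ will be an embedded genus-zero plane with a single positive end at $\gamma$ of multiplicity one. Its projection $\pi(u)\subset L(3,1)$ is therefore an embedded disk with embedded boundary $\gamma$, and by Lemma \ref{lem:openbook} the family $\{\pi(s(t))\}_{t\in S^{1}}$ forms a genuine open book decomposition of $L(3,1)$ with disk pages bounded by $\gamma$. Since the mapping class group of the disk rel boundary is trivial (Alexander's trick), any such open book has total space $S^{3}$, contradicting $L(3,1)\not\cong S^{3}$.

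For case (6), the $U$-map curve will be a genus-zero cylinder with positive ends at $\gamma_{1}^{2}$ and $\gamma_{2}^{2}$, whose projection yields a rational open book on $L(3,1)$ with annular pages, each of whose two boundary components wraps the corresponding $\gamma_{i}$ twice. I plan to derive a contradiction by computing $H_{1}(L(3,1))$ from this data. Take tubular neighborhoods $V_{i}$ of $\gamma_{i}$ and set $M:=L(3,1)\setminus\mathrm{int}(V_{1}\cup V_{2})$. Then $M$ is the mapping torus of a diffeomorphism $\phi$ of the closed annulus $\bar{A}$ preserving each boundary component. Since the mapping class group of $\bar{A}$ rel boundary is cyclic, generated by the Dehn twist $T$ around the core, $\phi$ is isotopic to $T^{n}$ for some $n\in\ZZ$, giving $H_{1}(M)\cong\ZZ[\theta]\oplus\ZZ[t]$ with $[\theta]$ the page-boundary direction and $[t]$ the mapping-torus direction (by the Wang sequence, using $\phi_{*}=\mathrm{id}$ on $H_{1}(\bar{A})$).

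The crucial step will be a rectangle argument in the mapping torus showing that the inclusion $\partial V_{2}\hookrightarrow M$ sends the $t$-circle on $\partial V_{2}$ to $[t]-n[\theta]\in H_{1}(M)$ (the $-n[\theta]$ correction arising from the Dehn twist), while at $\partial V_{1}$ the inclusion is the obvious one. Combining this with the fact that each page boundary is a $(2,a_{i})$-cable for some odd integer $a_{i}$ (the simple-closed-curve condition forces $\gcd(2,a_{i})=1$), an elementary basis change will give
\begin{equation*}
\mu_{1}=d_{1}[\theta]-2[t],\qquad \mu_{2}=(d_{2}+2n)[\theta]-2[t]\quad\text{in }H_{1}(M)
\end{equation*}
for some odd integers $d_{1},d_{2}$. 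Applying the Smith normal form to the resulting $2\times 2$ presentation matrix (whose gcd of entries is $1$ by oddness of the $d_{i}$, and whose determinant equals $2(d_{2}+2n-d_{1})\in 4\ZZ$ since $d_{2}-d_{1}$ is even) will yield $H_{1}(L(3,1))\cong\ZZ$ or $\ZZ/(4k)$ for some $k\in\ZZ_{>0}$, neither of which equals $\ZZ/3$, giving the desired contradiction. The main technical obstacle will be executing the rectangle argument to establish the $-n[\theta]$ twist correction correctly.
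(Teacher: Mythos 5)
Your treatment of case (7) coincides with the paper's: an open book with embedded disk pages forces the total space to be $S^{3}$, which rules it out. For case (6) you take a genuinely different route. The paper collapses the annulus mapping torus and treats the configuration as a genus-one Heegaard splitting $V_{1}\cup_{g}V_{2}$ of $L(3,1)$: it invokes the classification of gluing matrices realizing $L(3,1)$ (the $(1,2)$-entry must be $\pm 3$), notes that the page traces a $(2,p_{i})$-cable with $p_{i}$ odd on each $\partial V_{i}$, and reads off the parity contradiction $-2=2a-3p_{1}$ from the first row of the matrix. You instead keep the complement $M$ as the mapping torus of the annulus, compute $H_{1}(M)\cong\ZZ[\theta]\oplus\ZZ[t]$ via the Wang sequence, express the meridians $\mu_{i}$ of the binding neighborhoods as $(\text{odd})[\theta]\pm 2[t]$ (the oddness again coming from the embedded $(2,a_{i})$-cable condition, the Dehn-twist power only shifting the odd coefficient by an even amount), and conclude from the Smith normal form that $H_{1}$ of the glued manifold is either infinite or of order divisible by $4$, hence never $\ZZ/3$. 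Both arguments ultimately exploit the same mod-$2$ obstruction created by the multiplicity-two page boundaries, but yours is more self-contained --- it does not rely on knowing which gluing matrices produce $L(3,1)$, and in fact shows the configuration is impossible on any lens space of odd order --- at the cost of the extra bookkeeping in the mapping-torus $H_{1}$ computation (and one could shortcut your final step by reducing mod $2$: both $\mu_{i}$ become $[\theta]$, so $H_{1}(\,\cdot\,;\ZZ/2)\neq 0$, contradicting $|H_{1}|=3$). The signs and the $-n[\theta]$ correction you flag as the technical obstacle do not affect the conclusion, since the $[t]$-coefficients stay $\pm 2$ and the $[\theta]$-coefficients stay odd under any consistent choice.
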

\begin{proof}[\bf Proof of Lemma \ref{lem:ex1}]
    It is obvious that if a 3-manifol $Y$ has a open book decomposition such that each page is embedded disk, then $Y$ is diffeomorphic to $S^{3}$. Therefore (7) is excluded. 
    
Next, we consider (6) in Lemma \ref{lem:indechcomp}. For $\gamma_{1}$ and $\gamma_{2}$, we take Martinet tubes  $F_{i}:\RR/T_{\gamma_{i}}\ZZ \times \DD_{\delta} \to \Bar{U_{i}}$ for a sufficiently small $\delta>0$ where $\gamma_{i}\subset U_{i}$. Since $\pi(s(t))$ is embedded and connected on $Y$, $F_{i}^{-1}(\RR/T_{\gamma_{i}}\ZZ \times\partial \DD_{\delta} \cap \pi(s(t)))$ is $(2,p_{i})$-cable for odd integers $p_{i}\in \ZZ$ for any $t\in S^{1}$. In addition, the gluing map from $F_{1}(\RR/T_{\gamma_{1}}\ZZ \times\partial \DD_{\delta})$ to $F_{2}(\RR/T_{\gamma_{2}}\ZZ \times\partial \DD_{\delta})$ which maps the $(2,p_{1})$-cabling curve to the $(-2,-p_{2})$-cabling curve along $\mathrm{pr}_{2}(s(t))$ for each $t\in S^{1}$ recovers $Y \cong L(3,1)$ (note the sign). We note that the gluing map is described as 
\begin{equation*}
\begin{pmatrix}
a & -3 \\
c & d \\
\end{pmatrix}
\end{equation*}
in standard longitude-meridian coordinates on the torus. Since the matrix send a $(2,p_{1})$-cabling curve to a $(-2,-p_{2})$-cabling curve, it follows from the first line of the matrix that $-2=2a-3p_{1}$. Since $p_{1}$ is odd, this can not occur. Thus (7) is excluded.
\end{proof}
\begin{lem}\label{lem:ex2}
  Any open book decomposition coming from (5) in Lemma \ref{lem:indechcomp} does not support $(L(3,1),\xi_{\mathrm{std}})$.
\end{lem}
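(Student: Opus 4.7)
The strategy extends the cabling-and-gluing analysis of Lemma \ref{lem:ex1} (case (6)), but since the page boundary now wraps only once around each binding, the parity obstruction of case (6) is no longer available. We must therefore combine the cabling data with contact-topological information coming from $\xi_{\mathrm{std}}$ in order to produce a contradiction. Assume for contradiction that a rational open book decomposition coming from case (5) of Lemma \ref{lem:indechcomp} supports $(L(3,1),\xi_{\mathrm{std}})$; so the ECH generator is $\alpha=(\gamma_1,1)\cup(\gamma_2,1)$ with $[\alpha]=0\in H_1(L(3,1))=\mathbb Z/3$, the pages are annuli, and each page boundary wraps once around each $\gamma_i$.

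First, following the template of Lemma \ref{lem:ex1}, I would take Martinet tubes $F_i:\mathbb R/T_{\gamma_i}\mathbb Z\times\mathbb D_\delta\to\bar U_i$ for small $\delta>0$. Since the multiplicity at each binding is $1$, the page $\pi(s(t))$ meets $\partial\bar U_i$ in a single embedded closed curve which, in the coordinates induced by $F_i$, is a $(1,p_i)$-cable for some integer $p_i$. The complement $L(3,1)\setminus(U_1\cup U_2)$ is fibered by the page restrictions over $S^1$, so it is a thickened torus; together with $\bar U_1$ and $\bar U_2$ this gives a genus-one Heegaard splitting of $L(3,1)$. The gluing matrix $\bigl(\begin{smallmatrix}a&-3\\ c&d\end{smallmatrix}\bigr)$ must send the $(1,p_1)$-cable to the $(-1,-p_2)$-cable, which yields $a=3p_1-1$ and $d=-1-3p_2$ with determinant condition $ad+3c=1$. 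These equations admit integer solutions for any $p_i$, so (as noted in the statement) no direct topological obstruction emerges at this stage.

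To continue, I would pull the open book back along the universal cover $\rho:S^3\to L(3,1)$. Since each $\gamma_i$ is a core of a genus-one Heegaard splitting of $L(3,1)$, its class $[\gamma_i]\in\pi_1(L(3,1))=\mathbb Z/3$ is a generator, so $\tilde\gamma_i:=\rho^{-1}(\gamma_i)$ is a single connected knot in $S^3$. The core of each downstairs annular page also represents a generator of $\pi_1(L(3,1))$, so the connected $3$-fold cover of each page is again an annulus. Hence the lifted structure is a genuine annular open book on $(S^3,\xi_{\mathrm{std}})$ with binding $\tilde\gamma_1\cup\tilde\gamma_2$. Invoking the well-known fact that an annular open book on $(S^3,\xi_{\mathrm{std}})$ is (up to isotopy) the positive Hopf-link open book, we conclude that $\tilde\gamma_1\cup\tilde\gamma_2$ is the positive Hopf link, and in particular each $\tilde\gamma_i$ is unknotted in $S^3$. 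Descending the bounding disk of $\tilde\gamma_i$ via $\rho$ gives an immersed disk whose boundary covers $\gamma_i$ three times, so $\gamma_i$ is $3$-unknotted. A computation of the intersection number of this descended disk with a contact push-off of $\gamma_i$ then identifies $sl^{\mathbb Q}_\xi(\gamma_i)=-\tfrac1{3}$, i.e.\ $\gamma_i\in\mathcal S_3$.

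Finally, I would feed this information back into Theorem \ref{thm:main} and Lemma \ref{lem;key}. By hypothesis each $\gamma_i$ is not positive hyperbolic, so it is either elliptic or negative hyperbolic. Writing out the identity of Lemma \ref{lem;key} with $sl^{\mathbb Q}_\xi(\gamma_i)=-1/3$ turns the $(1,p_i)$-cabling data into an explicit relation between $\mu_{\tau_{F_i}}(\gamma_i^3)$, $\mu_{\mathrm{disk}}(\gamma_i^3)$ and the Seifert-disk cabling index $r_i\equiv2\pmod3$; combining it with Proposition \ref{conleybasic} in the hyperbolic case and with the partition/ellipticity constraints in the elliptic case rules out each of the subcases of (5) enumerated in the proof of Lemma \ref{lem:indechcomp} (namely $(\mu_{\mathrm{disk}}(\gamma_1^3),\mu_{\mathrm{disk}}(\gamma_2^3))\in\{(3,3),(3,5),(5,3),(3,7),(5,5),(5,7)\}$ and their symmetric variants). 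This contradiction completes the proof.

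The step I expect to be the main obstacle is the last one: running the modular arithmetic from Theorem \ref{thm:main} uniformly over all the subcases of (5) while keeping track of which subcases are hyperbolic and which are elliptic, and verifying that in the elliptic subcases the partition conditions from Proposition \ref{partitioncondition} are genuinely incompatible with the rational open book structure. The passage to $S^3$ and the uniqueness of the annular open book there is the conceptually clean input, but the arithmetic bookkeeping in the final step is where the real work lies.
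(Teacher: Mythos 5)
There is a genuine gap, and it is in the step you yourself flag as the main obstacle: the proposed arithmetic endgame cannot produce a contradiction. By the time you have shown $\gamma_i\in\mathcal{S}_3$, the identity of Lemma \ref{lem;key} combined with Theorem \ref{thm:main} only ever tells you that $\gamma_i$ is \emph{elliptic} (that is the entire content of Theorem \ref{thm:main} for orbits with $sl^{\mathbb{Q}}_\xi=-\tfrac13$); it imposes no inconsistency. Indeed, a pair of simple elliptic orbits in $\mathcal{S}_3$ with $\mu_{\mathrm{disk}}(\gamma_i^3)=3$ is exactly the kind of configuration that \emph{does} survive in cases (1)--(3) of Proposition \ref{prp;main}, so no amount of Conley--Zehnder or partition bookkeeping applied orbit-by-orbit can rule out case (5): the obstruction is not arithmetic but contact-topological. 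The paper's proof is a single sentence: an open book decomposition of $L(3,1)$ (with the orientation fixed in the paper) whose pages are embedded annuli necessarily supports an \emph{overtwisted} contact structure --- the monodromy is forced to be a power of a negative Dehn twist, the positive-twist annular open book yielding $L(3,2)\not\cong L(3,1)$ --- whereas $\xi_{\mathrm{std}}$ is tight.

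Your intermediate idea of lifting to the universal cover and identifying the lift with the positive Hopf-band open book on $(S^3,\xi_{\mathrm{std}})$ is not wrong and could in principle be completed along these same lines: the downstairs monodromy whose $3$-fold lift is a single positive Dehn twist is $\tau^{+3}$, which produces $L(3,2)$ rather than $L(3,1)$, giving the contradiction directly. But you abandon that thread in favor of the index computation, which is a dead end; and your remark that ``no direct topological obstruction emerges'' from the gluing-matrix analysis is precisely where the orientation information you need gets discarded. As written, the proof does not close.
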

\begin{proof}[\bf Proof of Claim \ref{lem:ex2}]
    As mentioned, for any section $s:\mathcal{M}^{J}_{u}/\RR \to \mathcal{M}^{J}_{u}$,  $\bigcup_{t\in S^{1}} \pi(s(t)))$ gives  a rational open book decomposition of $L(3,1)$ supporting $\xi_{\mathrm{std}}$. But it is well-known that the contact structure on $L(3,1)$ supported by an open book decomposition such that each page is an embedded annulus is overtwisted. This completes the proof.
\end{proof}
\begin{lem}\label{lem;(1)}
 Suppose that  $\alpha$ satisfies either (1) or (2) or (3) or (4) in Lemma \ref{lem:indechcomp}. Then any simple orbit $\gamma_{i}$  in $\alpha$ is in $\mathcal{S}_{3}$.
\end{lem}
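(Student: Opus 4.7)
The plan is to exhibit, for each simple orbit $\gamma_i$ appearing in $\alpha$, a Birkhoff section of disk type whose boundary is $\gamma_i^3$; Theorem \ref{fundament} will then yield $\gamma_i\in\mathcal{S}_3$ (including the self-linking identity $sl^\mathbb{Q}_\xi(\gamma_i)=-1/3$) directly. The starting point is the rational open book decomposition provided by Lemma \ref{lem:openbook}: the family $\{\pi(s(t))\}_{t\in S^1}$, built from the one-parameter family $\mathcal{M}^J_u/\RR\cong S^1$ of $J$-holomorphic curves, foliates $L(3,1)\setminus\bigcup_i\gamma_i$ by pages transverse to $X_\lambda$ with finite forward and backward return time.

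Case (2) is the clean case. For $\alpha=(\gamma,3)$ the curve $u$ has genus $0$ and a single positive end of multiplicity $3$ at $\gamma$, so $\pi(u)$ is a topologically embedded disk whose boundary $3$-covers $\gamma$. Together with the transversality and return-time properties inherited from the open book, this exhibits $\pi(u)$ as a Birkhoff section of disk type for $\gamma^3$, and Theorem \ref{fundament} finishes the case.

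For cases (1), (3), (4) the page is a pair of pants or an annulus, so Theorem \ref{fundament} cannot be invoked on a single page. Instead, fix $\gamma_i$ and I would build a disk-type Birkhoff section for $\gamma_i^3$ by concatenating a finite stack of consecutive pages along the Reeb flow, and capping off each boundary component at the other bindings $\gamma_j$ ($j\neq i$) by meridional disks inside the Martinet tubes $F_j$ used in the proof of Proposition \ref{main;prop}. The correct number of pages to stack, and the fact that the boundary at $\gamma_i$ closes up as $\gamma_i^3$ (rather than a higher multiple), are forced by the homology relation $\sum_j m_j[\gamma_j]=0$ in $H_1(L(3,1))=\ZZ/3$ combined with the multiplicities $m_j$ dictated by $\alpha$. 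The transversality to $X_\lambda$ of the resulting capped immersed disk comes from the transversality of each page and the standard fact that the cappings in the Martinet tubes can be made transverse to $X_\lambda$.

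The hard part is case (4), where $\mu_{\disk}(\gamma_1^6)=5$ rather than $\mu_{\disk}(\gamma_1^3)=3$. Here Proposition \ref{conleybasic} pins down the monodromy angle of $\gamma_1$ from $\mu_{\disk}(\gamma_1^6)=5$, which then determines $\mu_{\disk}(\gamma_1^3)$; combining this data with Theorem \ref{thm:main} applied to $\gamma_1$ (after the Martinet-tube capping above exhibits $\gamma_1$ as $3$-unknotted) is needed to verify the self-linking number is $-1/3$ and complete the argument. This combinatorial bookkeeping of monodromy angles and cappings, rather than any analytic input, is where I expect the technical difficulty to concentrate.
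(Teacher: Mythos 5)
Your treatment of case (2) is fine and agrees with the paper: when the page is a disk, Theorem \ref{fundament} applies directly. But your construction for cases (1), (3), (4) has a fatal topological flaw. A boundary component of a page at a binding orbit $\gamma_j$ of multiplicity $m_j$ is (a cable of) a \emph{longitudinal} curve in the tubular neighborhood $\Bar{U}_j$ --- it covers $\gamma_j$ longitudinally $m_j$ times --- so it is not null-homotopic in the solid torus $\Bar{U}_j$ and cannot bound any disk, meridional or otherwise, inside the Martinet tube. The proposed ``capping off each boundary component at the other bindings by meridional disks'' is therefore impossible: a meridian disk bounds the $(0,1)$-curve, whereas the curve you need to cap has nonzero longitude coefficient. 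Likewise, ``concatenating a finite stack of consecutive pages along the Reeb flow'' does not define a surface: distinct pages $\pi(s(t))$ of the open book are pairwise disjoint and there is no gluing between them; their union over an interval of $t$ is a $3$-dimensional region. So no disk-type Birkhoff section for $\gamma_i^3$ is produced, and Theorem \ref{fundament} cannot be invoked. Your handling of case (4) also reverses the logic of Theorem \ref{thm:main}: that theorem takes $sl^{\mathbb{Q}}_{\xi}(\gamma)$ as an input to deduce ellipticity; it does not compute the self-linking number, which is precisely what membership in $\mathcal{S}_3$ requires.

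The paper's actual argument is quite different and avoids constructing any disk in cases (1), (3), (4). It first observes that the abstract open book is rigid: for annulus pages (cases (3), (4)) the monodromy is determined up to isotopy by elementary arguments, and for the planar page with three boundary components (case (1)) this follows from the classification in \cite{Ar}. Since the binding link of a rational open book is determined as a \emph{transverse} link by the page and monodromy (the remark following the lemma), it then suffices to verify the claim on one explicit model. The paper exhibits such models as quotients of Milnor fibrations on the round $(S^3,\lambda_0)$ --- $f=z_1^3+z_2^3$ for case (1) and $f=z_1 z_2^2$ for cases (3) and (4) --- whose bindings are images of Hopf fibers in $(L(3,1),\lambda_0|_{L(3,1)})$ and hence visibly $3$-unknotted with rational self-linking number $-\tfrac{1}{3}$. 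Membership in $\mathcal{S}_3$ is a transverse-isotopy invariant, so it transfers to the orbits of $\alpha$. If you want to salvage your approach you would need an entirely different mechanism for producing the rational Seifert disk for $\gamma_i^3$; the paper's route through monodromy rigidity and explicit models is the step your proposal is missing.
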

\begin{proof}[\bf Proof of Lemma \ref{lem;(1)}]
 It follows from Theorem \ref{fundament} that  if $\alpha$ satisfies (2) in Lemma \ref{lem:indechcomp}, $\gamma$ in $\alpha$ is in $\mathcal{S}_{3}$.

 Next, we consider (1),  (3), (4) in Lemma \ref{lem:indechcomp}.

    We note that each monodromy of  (rational) open book decomposition coming from (1),  (3), (4) is unique up to isotopy. Indeed, in the case of  (3) or (4), it follows  from straightforward arguments since each page is annulus type. In the case of (1), it is complicated but follows from the classification of contact structures supported by planer open book decompositions with three boundaries given in \cite{Ar}.
    Therefore, it  suffices to check the statement specifically.
    For the purpose, we give the specific  (rational) open book decompositions as Milnor fibrations. 
    
    Let $S^{3}:=\{(z_{1},z_{2})|\,\,|z_{1}|^2+|z_{2}|^{2}=1\,\,\} \subset \CC^{2} $ denote the unit sphere. Recall that $(S^{3},\lambda_{0}|_{S^{3}})$ is a contact 3-sphere with tight contact structure whose Reeb vector field is given by the derivative of the action of multiplying by $e^{2\pi t}$. In this case, any flow is periodic and any simple periodic orbit is a Hopf fiber. In addition, it is obvious that any Hopf fiber is in $\mathcal{S}_{1}$.

    Let  $(L(3,1),\lambda_{0}|_{L(3,1)})$ denote the contact manifold obtained by taking the quotient of $(S^{3},\lambda_{0}|_{S^{3}})$ under the action  $(z_{1},z_{2}) \to e^{\frac{2\pi i}{3}}(z_{1},z_{2})$.
   Under the action, any Hopf fiber is preserved.  Therefore any flow on $(L(3,1),\lambda_{0}|_{L(3,1)})$ is periodic.  In addition, any simple periodic orbit is tangent to a image of a Hopf fiber of $S^{3}\to L(3,1)$ and thus obviously in $\mathcal{S}_{3}$.

 At first, we describe   (1) as a Milnor fibration. 
 Consider a map $f:\CC^2\to S^{1}$ with $f(z_{1},z_{2})=z_{1}^{3}+z_{2}^{3}$. It is easy to check that $g=f/|f|:S^{3}\backslash f^{-1}(0)\to S^{1}$ gives a open book decomposition supporting the tight contact structure and the binding consists of periodic orbits of $(S^{3},\lambda_{0}|_{S^{3}})$. In addition, for each $\theta \in S^{1}$,  $g^{-1}(\theta)$ is connected and of genus 1 with three boundary components.  Since $f(e^{\frac{2\pi}{3}}z_{1},e^{\frac{2\pi}{3}}z_{2})=f(z_{1},z_{2})$, $g$ induces, by taking a quotient space, an open book decomposition on $(L(3,1),\lambda_{0}|_{L(3,1)})$. It  follows  that each fiber of the open book decomposition is of genus 0 and three boundary components consisting of periodic orbits on $(L(3,1),\lambda_{0}|_{L(3,1)})$, which gives a description of (1).  Therefore  any simple orbit $\gamma_{i}$  in $\alpha$ is in $\mathcal{S}_{3}$.

 Second we describe (3) and (4).  Consider a map $f:\CC^2\to S^{1}$ with $f(z_{1},z_{2})=z_{1}z_{2}^{2}$.  $g=f/|f|:S^{3}\backslash f^{-1}(0)\to S^{1}$ gives a open book decomposition supporting the tight contact structure and the binding consists of periodic orbits of $(S^{3},\lambda_{0}|_{S^{3}})$ (see the remark below). In addition,  since $f(e^{\frac{2\pi i}{3}}z_{1},e^{\frac{2\pi i}{3}}z_{2})=f(z_{1},z_{2})$, $g$ induces, by taking a quotient space, an open book decomposition on $(L(3,1),\lambda_{0}|_{L(3,1)})$ whose boundary consists of periodic orbits on $(L(3,1),\lambda_{0}|_{L(3,1)})$. 
 Moreover, for any $\theta\in S^{1}$, $g^{-1}(\theta)$ is of genus $0$ with two boundary components  and thus each page of  the induced open book decomposition of $(L(3,1),\xi_{\mathrm{std}})$ is of genus $0$ with two boundary components which are periodic orbits of $(L(3,1),\lambda_{0}|_{L(3,1)})$, which gives a description of (3) and (4). This means that if   $\alpha$ satisfies  (3) or (4)  in Lemma \ref{lem:indechcomp}, then any simple orbit $\gamma_{i}$  in $\alpha$ is in $\mathcal{S}_{3}$.
\end{proof}
\begin{rem}
Consider a map $g=f/|f|:S^{3}\to S^{1}$ with $f(z_{1},z_{2})=\frac{z_{1}z_{2}^{2}}{|z_{1}z_{2}^{2}|}$. Then for $\theta \in S^{1}$, the fiber $g^{-1}(\theta)$ is the set $\{(re^{2\pi i \theta_{1}},\sqrt{1-r^2}e^{2\pi i \theta_{1}})|\,\theta_{1}+2\theta_{2}=\theta\,\}$.
\end{rem}
\begin{rem}
    Let $p_{1}:M\backslash B_{1} \to S^{1}$ and $p_{2}:M\backslash B_{2} \to S^{1}$ be rational open book decompositions supporting a contact manifold $(M,\xi)$. Suppose that each page are diffeomorphic to each other and the monodromies are the same. Then the binding links $B_{1}$ and $B_{2}$ are the same as transversal links. Indeed, Let $B_{i}\subset U_{i}$ be a sufficiently small tubular neighborhood. Then we can construct a diffeomorphism $f:M \to M$ such that $f(B(1))=B(2)$ and $f$ maps each page of $p_{1}:M\backslash U_{1}\to S^{1}$ to $p_{2}:M\backslash U_{2}\to S^{1}$. It follows from standard arguments that we can construct an isotopy of contact structures from $f_{*}(\xi)$ to $\xi$ so that the binding link $f(B(1))=B(2)$ is preserved with transversal to the isotopy contact structures.
\end{rem}
\begin{proof}[\bf Proof of Proposition \ref{main;prop}]
Having Lemma \ref{lem:ex1}, Lemma \ref{lem:ex2} and Lemma \ref{lem;(1)},   it suffices to exclude (4) in Lemma \ref{lem:indechcomp}. Suppose that $\alpha$ satisfies (4) in Lemma \ref{lem;(1)}. Then there are two elliptic orbits $\gamma_{1}$ and $\gamma_{2}$ such that $\alpha=(\gamma_{1},2)\cup (\gamma_{2},1)$,  $\mu_{\disk}(\gamma_{1}^{6})=5$ and  $\mu_{\disk}(\gamma_{2}^{3})=5$.
Note that $\mu_{\disk}(\gamma_{1}^{6})=5$ means $\mu_{\disk}(\gamma_{1}^3)=3$

In addition, according to Lemma \ref{lem;(1)}, we have $\gamma_{2}\in \mathcal{S}_{3}$. Recall that $\gamma_{2}\in \mathcal{S}_{3}$ means that there is a rational Seifert surface $u:\DD \to L(3,1)$ with $u(e^{2\pi t})=\gamma_{2}(3T_{\gamma_{2}}t)$ and $sl_{\xi}^{\QQ}(\gamma_{2})=-\frac{1}{3}$. 
Take a Martinet tube $F:\RR/T_{\gamma_{2}}\ZZ \times \DD_{\delta} \to \Bar{U}$ for a sufficiently small $\delta>0$ onto a small  open neighbourhood $\gamma_{2} \subset \Bar{U}$.
Let  $\tau_{F}:\gamma_{i}^{*}\xi \to \RR/T_{\gamma_{2}}\ZZ \times \RR^{2}$ be a trivialization induced by $F$. We may choose $F$ so that  $\mu_{\tau_{F}}(\gamma_{2})=1$.

Note that $F^{-1}(u(\DD)\cap\partial \Bar{U})$ is a $(3,r)$ cable such that $r=3k-1$ for some $k\in \ZZ$ with respect to the coordinate of $\RR/T_{\gamma_{2}}\ZZ \times \DD $.
It follows from Lemma \ref{lem;key} that
\begin{equation*}
    \mu_{\tau_{F}}(\gamma_{2}^{3})-2r-6sl_{\xi}^{\QQ}(\gamma_{2})=\mu_{\disk}(\gamma_{2}^{3})
\end{equation*}
In this case, it follows from $r=3k-1$ and $sl_{\xi}^{\QQ}(\gamma_{2})=-\frac{1}{3}$ that  $\mu_{\tau_{F}}(\gamma_{2}^{3})-2r-6sl_{\xi}^{\QQ}(\gamma_{2})=\mu_{\tau_{0}}(\gamma_{2}^{3})+4-6k$. Since $\gamma_{2}$ is elliptic, there is $\theta \in \RR \backslash \QQ$ such that $\mu_{\tau_{F}}(\gamma_{2}^m)=2\lfloor m\theta \rfloor+1$ for every $m \in \ZZ_{>0}$. Recall that we take $\tau_{F}$ so that $\mu_{\tau_{F}}(\gamma_{2})=1$. Hence $0< \theta <1$ and so we have $\mu_{\tau_{0}}(\gamma_{2}^3)=1$ or  $3$ or $5$.

Since $\mu_{\disk}(\gamma_{2}^3)=5$. $\mu_{\tau_{0}}(\gamma_{2}^{3})+4-6k=\mu_{\disk}(\gamma^{3})$ holds only if $k=0$ and  $\mu_{\tau_{0}}(\gamma_{2}^3)=1$. Let $u\in \mathcal{M}^{J}(\alpha,\emptyset)$. From (\ref{eq:ineq}), it follows that
\begin{equation*}
    \begin{split}
    3\ind(u)=&-3(2-2g(u)-2)+2c_{\tau}(\xi|_{3[u]})+ 3\mu_{\tau_{0}}(\gamma_{2})+3\mu_{\tau}(\gamma_{1}^{2})\\
      =&-6+\mu_{\mathrm{disk}}(\gamma_{1}^{3})+\mu_{\mathrm{disk}}(\gamma_{2}^{6})\\&+(3\mu_{\tau_{0}}(\gamma_{2})-\mu_{\tau_{0}}(\gamma_{2}^{3})+3)+(3\mu_{\tau}(\gamma_{1}^2)-\mu_{\tau}((\gamma_{1}^{2})^3)+3)\\
      \geq&-6+\mu_{\mathrm{disk}}(\gamma_{1}^{3})+\mu_{\mathrm{disk}}(\gamma_{2}^{6})+(3\mu_{\tau_{0}}(\gamma_{2})-\mu_{\tau_{0}}(\gamma_{2}^{3})+3)+1
      \end{split}
\end{equation*}
Here $\tau$ is a trivialization on $\gamma_{1}$ and we use Lemma \ref{lem:iterate}. Since $\mu_{\tau_{F}}(\gamma_{2})=\mu_{\tau_{0}}(\gamma_{2}^3)=1$, we have $3\mu_{\tau_{0}}(\gamma_{2})-\mu_{\tau_{0}}(\gamma_{2}^{3})+3=5$. In addition since $\mu_{\mathrm{disk}}(\gamma_{1}^{3})=\mu_{\mathrm{disk}}(\gamma_{2}^{6})=5$, we have $\mu_{\mathrm{disk}}(\gamma_{1}^{3})+\mu_{\mathrm{disk}}(\gamma_{2}^{6})=10$. In summary we have 
\begin{equation*}
    -6+\mu_{\mathrm{disk}}(\gamma_{1}^{3})+\mu_{\mathrm{disk}}(\gamma_{2}^{6})+(3\mu_{\tau_{0}}(\gamma_{2})-\mu_{\tau_{0}}(\gamma_{2}^{3})+3)+1=10.
\end{equation*}
But $3\ind(u)=6$. This contradicts the inequality. This means that (4) in Lemma \ref{lem:indechcomp} is impossible. This completes the proof.
\end{proof}

\section{Proof of the main theorem under non-degeneracy}
What follows in this section is written in almost the same way with \cite[Section 4]{Shi},  but it's more detailed.

Let  $\langle \alpha_{1}+...+\alpha_{k} \rangle$ denotes the element in $ECH(Y,\lambda)$ for a sum of ECH generators  $\alpha_{1}+...+\alpha_{k}$  with $\partial_{J}(\alpha_{1}+...+\alpha_{k})=0$.

\begin{lem}\label{existenceofhol}
 Let $(L(3,1),\lambda)$ be a dynamically convex non-degenerate contact manifold with $\lambda \wedge d\lambda>0$. Let $\alpha_{1},...,\alpha_{k}$ be ECH generators with $[\alpha_{i}]=0$ and $I(\alpha_{i},\emptyset)=2$ for $i=1,...k$. Suppose that $\partial_{J}(\alpha_{1}+...+\alpha_{k})=0$ and $0 \neq \langle \alpha_{1}+...+\alpha_{k} \rangle 
\in  ECH_{2}((3,1),\lambda,0)$. Then there exists $i$ such that $\langle U_{J,z}\alpha_{i},\emptyset \rangle\neq  0$.
\end{lem}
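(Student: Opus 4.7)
The plan is to combine three ingredients from the preliminaries: (i) Proposition \ref{lenssp}, which tells us that $U: \mathrm{ECH}_{2}(L(3,1),\lambda,0) \to \mathrm{ECH}_{0}(L(3,1),\lambda,0)$ is an isomorphism between one-dimensional $\mathbb{F}_{2}$-vector spaces, and that $\mathrm{ECH}_{0}(L(3,1),\lambda,0) = \mathbb{F}_{2}\cdot c(\xi_{\mathrm{std}}) = \mathbb{F}_{2}\cdot\langle\emptyset\rangle$; (ii) the chain-level definition of $U_{J,z}$; and (iii) Proposition \ref{prp:immersed}(1), which forbids the existence of immersed $J$-holomorphic curves of Fredholm index $1$ with no negative end.

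First, since $\langle\alpha_{1}+\cdots+\alpha_{k}\rangle\neq 0$ in the one-dimensional space $\mathrm{ECH}_{2}(L(3,1),\lambda,0)$, it is the generator, and so $U\langle\alpha_{1}+\cdots+\alpha_{k}\rangle$ is the generator of $\mathrm{ECH}_{0}(L(3,1),\lambda,0)$, namely $\langle\emptyset\rangle$. At the chain level this means
\begin{equation*}
U_{J,z}(\alpha_{1}+\cdots+\alpha_{k}) \;=\; \emptyset \,+\, \partial_{J}(c)
\end{equation*}
for some chain $c\in\mathrm{ECC}_{1}(L(3,1),\lambda,0)$. Comparing the coefficients of $\emptyset$ on both sides yields
\begin{equation*}
\sum_{i=1}^{k}\langle U_{J,z}\alpha_{i},\,\emptyset\rangle \;=\; 1 \,+\, \langle \partial_{J}(c),\,\emptyset\rangle
\end{equation*}
in $\mathbb{F}_{2}$.

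The crucial step is to verify that $\langle \partial_{J}(c),\emptyset\rangle = 0$. For this it suffices to show that $\langle \partial_{J}\gamma,\emptyset\rangle = 0$ for every ECH generator $\gamma$ with $I(\gamma,\emptyset)=1$. Any $J$-holomorphic curve $u\in\mathcal{M}^{J}_{1}(\gamma,\emptyset)/\mathbb{R}$ has no negative end, so its $\mathbb{R}$-invariant component $u_{0}$ must be empty (trivial cylinders always carry negative ends). Thus $u=u_{1}$, which by Proposition \ref{indexproperties}(3) is embedded with $\mathrm{ind}(u_{1})=1$. But Proposition \ref{prp:immersed}(1) rules out any immersed $J$-holomorphic curve with no negative end and Fredholm index $1$. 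Hence $\mathcal{M}^{J}_{1}(\gamma,\emptyset)/\mathbb{R}$ is empty, and $\langle\partial_{J}\gamma,\emptyset\rangle=0$.

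Combining the two displays, $\sum_{i}\langle U_{J,z}\alpha_{i},\emptyset\rangle = 1 \neq 0$ in $\mathbb{F}_{2}$, so at least one of the summands is non-zero, proving the lemma. The only place any real work takes place is the vanishing of the $\emptyset$-coefficient of $\partial_{J}(c)$, and that in turn is an immediate consequence of the dynamical-convexity input already packaged into Proposition \ref{prp:immersed}(1); no additional index bookkeeping is required.
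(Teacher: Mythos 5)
Your proof is correct and is essentially the argument the paper intends (the paper simply defers to \cite[Lemma 4.6]{Shi}): you use Proposition \ref{lenssp} to identify $U\langle\alpha_1+\cdots+\alpha_k\rangle$ with $\langle\emptyset\rangle$ at the level of the one-dimensional groups, and then kill the correction term $\langle\partial_J(c),\emptyset\rangle$ by combining Proposition \ref{indexproperties}(3) with Proposition \ref{prp:immersed}(1), which is exactly the mechanism the paper itself deploys in Claim \ref{index1bound} and in the compactness step of Lemma \ref{lem:openbook}. No gaps; the only implicit point, that dynamical convexity forces $\xi=\xi_{\mathrm{std}}$ so that $c(\xi_{\mathrm{std}})=\langle\emptyset\rangle\neq 0$ applies, is also left implicit throughout the paper.
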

\begin{proof}[\bf Proof of Lemma \ref{existenceofhol}]
The proof is the same with \cite[Lemma 4.6]{Shi}.
\end{proof}
\begin{lem}\label{lem:ECHgen}
  Let $(L(p,1),\lambda)$ be a (not 
necessarily dynamically convex) non-degenerate contact manifold with $\lambda \wedge d\lambda>0$. Let $\alpha_{\gamma}=(\gamma,p)$ for  $\gamma \in \mathcal{S}_{p}$.  If $\mu_{\disk}(\gamma^{p})=3$, then $\alpha_{\gamma}$ is an ECH generator. In addition $I(\alpha_{\gamma},\emptyset)=2$.
\end{lem}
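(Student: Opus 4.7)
The plan is to split the argument into two parts: first show that $\alpha_\gamma$ is an ECH generator (which for $p\geq 2$ reduces to proving $\gamma$ is elliptic), and then compute $I(\alpha_\gamma,\emptyset)$ term by term using Hutchings' formula in a carefully chosen Martinet tube trivialization around $\gamma$. For $p=1$ the claim is immediate: $(\gamma,1)$ is automatically an ECH generator (multiplicity one imposes no restriction), and with the embedded Seifert disk $u$ one has $c_{\tau_{\disk}}(\xi|_u)=0$, $Q_{\tau_{\disk}}(u)=sl_{\xi}(\gamma)=-1$, and $\mu_{\disk}(\gamma)=3$, giving $I=2$. For $p\geq 2$ I would invoke Theorem \ref{thm:main}: since $\gamma\in\mathcal{S}_p$ gives $sl_{\xi}^{\QQ}(\gamma)=-1/p$ and $q=1$, the congruence condition amounts to checking that $-2r-2p(-1/p)-3=-2r-1\equiv 1\pmod{p}\neq 0$ for every $r\equiv -1\pmod{p}$. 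Hence $\gamma$ is elliptic, and because elliptic orbits admit arbitrary multiplicity, $\alpha_\gamma=(\gamma,p)$ is an ECH generator.

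For the index computation, I would select a Martinet tube $F\colon\RR/T_\gamma\ZZ\times\DD_\delta\to\Bar U$ around $\gamma$ such that the rational Seifert disk $u$ of $\gamma^p$ meets $\partial\Bar U$ along a $(p,-1)$-cable; this is the representative choice $r=-1$ within the allowed congruence class $r\equiv -1\pmod p$ that appears in the discussion before Proposition \ref{main;prop}. Applying Lemma \ref{lem;key} then yields
\[
\mu_{\tau_F}(\gamma^p)=\mu_{\disk}(\gamma^p)+2p\cdot sl_{\xi}^{\QQ}(\gamma)+2r=3-2-2=-1.
\]
Writing $\theta$ for the monodromy angle of the elliptic $\gamma$ with respect to $\tau_F$, we read off $\lfloor p\theta\rfloor=-1$, so $\theta\in(-1/p,0)$. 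Consequently $\lfloor k\theta\rfloor=-1$ for every $1\leq k\leq p$, so $\mu_{\tau_F}(\gamma^k)=-1$ for each such $k$ and $\sum_{k=1}^{p}\mu_{\tau_F}(\gamma^k)=-p$.

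It remains to establish $c_{\tau_F}(\xi|_u)+Q_{\tau_F}(u)=p+2$, which would give $I(\alpha_\gamma,\emptyset)=(p+2)+(-p)=2$. For the relative Chern piece, the disk trivialization satisfies $c_{\tau_{\disk}}(\xi|_u)=0$ by construction, and equation (\ref{wind}) in the proof of Lemma \ref{lem;key} gives $\mathrm{wind}(\tau_F,\tau_{\disk})=-r-p\cdot sl_{\xi}^{\QQ}(\gamma)=1+1=2$; combined with the standard change-of-trivialization formula and invariance of the Fredholm index, this should produce $c_{\tau_F}(\xi|_u)=2$. For the $Q$-contribution I would push $u$ off along the Martinet vector field $\partial_x$ (the $\tau_F$-direction on $\gamma$); the boundary of the pushoff is precisely the curve $Z_F^{\epsilon}$ of the proof of Lemma \ref{lem;key}, and the intersection count $\#(u(\DD)\cap Z_F^{\epsilon})=-rp=p$ that appears there is the expected value of $Q_{\tau_F}(u)$.

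The main obstacle will be making the identification $Q_{\tau_F}(u)=p$ fully rigorous, because $u$ is only a \emph{rational} Seifert surface and in general merely immersed, so the pushoff count must disentangle interior self-intersections from boundary linking. The cleanest route is likely to lift to the $p$-fold cover $\rho\colon S^3\to L(p,1)$, where $\tilde\gamma^p$ is a transverse unknot with self-linking $-1$ and $\tilde u$ is an embedded disk; the factor of $p$ then arises naturally from the covering multiplicity, reducing the computation to a standard one in $S^3$. A parallel calculation appears in \cite{Shi} for the case $p=2$, and the same strategy should extend to arbitrary $p\geq 2$.
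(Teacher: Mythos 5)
Your proposal follows essentially the same route as the paper: ellipticity of $\gamma$ via Theorem \ref{thm:main} (the paper quotes Corollary \ref{main;cor}, but invoking the theorem directly as you do is actually cleaner, since the lemma does not assume dynamical convexity), and then a term-by-term evaluation of $I(\alpha_\gamma,\emptyset)$ in a Martinet-tube trivialization controlled by Lemma \ref{lem;key}. Your choice of cable slope $r=-1$ rather than the paper's $r=p-1$ is just a different representative of the class $r\equiv -1 \pmod p$; your resulting values $c_{\tau_F}=2$, $Q_{\tau_F}=p$, $\sum_{k=1}^{p}\mu_{\tau_F}(\gamma^k)=-p$ are exactly the paper's $2-p$, $-p(p-1)$, $p^2$ after one twist of the trivialization, and both sum to $2$. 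The one step you leave open --- identifying $Q_{\tau_F}$ with the pushoff count $\#(u(\DD)\cap Z_F^{\epsilon})$ --- is precisely where the paper takes a slightly different tack: since $u(\DD)$ is a page of a rational open book all of whose pages are Birkhoff sections (Theorem \ref{fundament}), one can take a second page $u'$ with $u(\mathring{\DD})\cap u'(\mathring{\DD})=\emptyset$ and apply the definition $Q_{\tau_F}(Z)=\#(S_1\cap S_2)-l_{\tau_F}(S_1,S_2)$ to two disjoint $(p,r)$-braids, giving $Q_{\tau_F}(Z)=-pr$ directly with no writhe or self-intersection bookkeeping. This avoids both the ``interior self-intersections versus boundary linking'' issue you flag and the detour through the cover $S^3\to L(p,1)$; you may want to adopt it, as it turns your one acknowledged obstacle into a one-line computation.
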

\begin{proof}[\bf Proof of Lemma \ref{lem:ECHgen}]
According to Corollary \ref{main;cor},  $\gamma$ is elliptic and hence by definition $\alpha_{\gamma}=(\gamma,p)$ is an ECH generator.

    Take a Martinet tube $F:\RR/T_{\gamma}\ZZ \times \DD_{\delta} \to \Bar{U}$ for a sufficiently small $\delta>0$ onto a small  open neighbourhood $\gamma \subset \Bar{U}$. 

     Recall that $\gamma\in \mathcal{S}_{p}$ means that there is a rational Seifert surface $u:\DD \to L(p,1)$ with $u(e^{2\pi t})=\gamma(pT_{\gamma_{2}}t)$ and $sl_{\xi}^{\QQ}(\gamma)=-\frac{1}{p}$.
     
      We may take $F$ so that $F^{-1}(u(\DD)\cap\partial \Bar{U})$ is $(p,p-1)$-cabling such that $r=3k-1$ for some $k\in \ZZ$ with respect to the coordinate of $\RR/T_{\gamma_{2}}\ZZ \times \DD_{\delta} $. 
      Let  $\tau_{F}:\gamma^{*}\xi \to \RR/T_{\gamma}\ZZ \times \RR^{2}$ be the trivialization induced by $F$. Let $\theta \in \RR \backslash \QQ$ denote the monodoromy angle of $\gamma$ with respect to $\tau_{F}$. In this case,  $\mu_{\tau_{F}}(\gamma^{k})=2\lfloor k \theta \rfloor+1$ for any $k\in \ZZ_{>0}$. It follows from Lemma \ref{lem;key} that
      \begin{equation*}
    \mu_{\tau_{F}}(\gamma^{p})+4-2p=2\lfloor p \theta \rfloor +5-2p=\mu_{\mathrm{disk}}(\gamma^{p})=3.
      \end{equation*}
      and hence
      \begin{equation*}
          \lfloor p \theta \rfloor=p-1.
      \end{equation*}
      This implies that $1-\frac{1}{p}<\theta<1$. In particular, this means that $\mu_{\tau_{F}}(\gamma^k)=2(k-1)+1$ for any $1 \leq k \leq p$. Therefore we have
      \begin{equation*}
          \sum_{1 \leq k \leq p}\mu_{\tau_{F}}(\gamma^{k})=p^2.
      \end{equation*}

      To compute the relative self intersection number $Q_{\tau_{0}}$, take another  Seifert surface $u':\DD \to Y$ of $\gamma$  so that $u(\mathring{\DD})\cap u'(\mathring{\DD})=\emptyset$. This is possible because $u(\DD)$ is a page of  a rational open book decomposition. Now, we take immersed $S_{1},S_{2}\subset [0,1]\times Y$ which are transverse to $\{0,1\}\times Y$ so that $\mathrm{pr}_{2}(S_{1})=u(\DD)$ and $\mathrm{pr}_{2}(S_{2})=u'(\DD)$ where $\mathrm{pr}_{2}:[0,1]\times Y \to Y$ is the projection.
      Set $\{Z\}:=H_{2}(Y,\alpha_{\gamma},\emptyset)$.
       It follows from the definition that we have $Q_{\tau_{F}}(Z)=-l_{\tau_{F}}(S_{1},S_{2})+\#(S_{1}\cap S_{2})$ where $l_{\tau_{F}}(S_{1},S_{2})$ is the linking number.
       More precisely, $l_{\tau_{F}}(S_{1},S_{2})$ is  one half the signed number of crossings of $F^{-1}(\mathrm{pr}_{2}(S_{1}\cap\{1-\epsilon\}\times Y))$ with $F^{-1}(\mathrm{pr}_{2}(S_{2}\cap\{1-\epsilon\}\times Y))$ for a small $\epsilon>0$ in the projection $(\mathrm{id},\mathrm{pr}_{1}):\RR/T_{\gamma}\ZZ\times \RR^{2}\to \RR/T_{\gamma}\ZZ\times \RR$ where we naturally assume that $\RR/T_{\gamma}\ZZ\times \DD_{\delta}\subset \RR/T_{\gamma}\ZZ\times \RR^{2}$ and  $\mathrm{pr}_{1}:\RR^{2}\to \RR$ is the projection to the first coordinate.  For more details, see  \cite{H1}. 

       In this situation,  it follows from the choice of $S_{1}$ and $S_{2}$ that  $\#(S_{1}\cap S_{2})=0$. In addition, since we take   $F$ so that $F^{-1}(u(\DD)\cap\partial \Bar{U})$ is a $(p,p-1)$ cabling, we have $l_{\tau_{F}}(S_{1},S_{2})=p(p-1)$. In summary, we have $Q_{\tau_{F}}(Z)=-p(p-1)$.

      At last, we compute the relative first chern number $c_{\tau_{F}}$.  Since   $\tau_{\disk}:(\gamma^p)^{*}\xi \to \RR/pT_{\gamma}\ZZ \times \RR^2$ extends to a trivialization by definition, we have $c_{\tau_{0}}=\mathrm{wind}(\tau_{0},\tau_{\disk})$. It follows from (\ref{wind}) that $\mathrm{wind}(\tau_{0},\tau_{\disk})=2-p$ and hence $c_{\tau_{0}}=\mathrm{wind}(\tau_{0},\tau_{\disk})=2-p$.
      By summarizing above results, we have $I(\alpha_{\gamma},\emptyset)=(2-p)+(-p(p-1))+p^2=2$. This completes the proof.
      \end{proof}
\begin{lem}\label{lem:boundaryop}
 Let $(L(p,1),\lambda)$ be a  non-degenerate dynamically convex contact manifold with $\lambda \wedge d\lambda>0$.  Let $\alpha_{\gamma}=(\gamma,p)$ for  $\gamma \in \mathcal{S}_{p}$.  If $\mu_{\disk}(\gamma^{p})=3$, then
     there is no somewhere injective $J$-holomorphic curve satisfying the following;
    \item[(1).] There is only one positive end. In addition, the positive end is asymptotic to $\gamma$ with multiplicity $p$.
    \item [(2).] There is at least one negative end.
    \item[(3).] Any puncture on the domain is either positive or negative end.
\end{lem}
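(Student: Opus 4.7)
The plan is to argue by contradiction: assume that such a somewhere injective $J$-holomorphic curve $u$ exists, and let $\beta=\{(\beta_{j},n_{j})\}$ be the orbit set formed by its $L\geq 1$ negative ends. The strategy is to derive a sharp upper bound on the Fredholm index $\ind(u)$ that is inconsistent with the existence of $u$ for the generic $J$.

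First, I would record what is already known about $\gamma$. By Corollary \ref{main;cor} the orbit $\gamma$ is elliptic, and the Martinet-tube calculation already performed in the proof of Lemma \ref{lem:ECHgen} pins down the monodromy angle $\theta$ of $\gamma$ with respect to the trivialization $\tau_{F}$ (chosen so that $\mu_{\tau_{F}}(\gamma)=1$): one has $\theta\in(1-1/p,\,1)$, so in particular $\mu_{\tau_{F}}(\gamma^{k})=2k-1$ for every $1\leq k\leq p$.

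The main step is to compute $\ind(u)$ by the same ``multiply by $p$'' trick that underlies Proposition \ref{prp:immersed}. Since every Reeb orbit in $L(p,1)$ has contractible $p$-th power, one can rewrite
\begin{equation*}
p\cdot\ind(u) \;=\; p\bigl(2g(u)+L-1\bigr)\;+\;\mu_{\disk}(\gamma^{p^{2}})\;-\;\sum_{j}\mu_{\disk}(\beta_{j}^{pn_{j}})\;+\;B\;-\;A,
\end{equation*}
where $B:=p\mu_{\tau}(\gamma^{p})-\mu_{\tau}(\gamma^{p^{2}})$ and $A:=p\sum_{l}\mu_{\tau}(\beta_{l}^{m_{l}})-\sum_{j}\mu_{\tau}(\beta_{j}^{pn_{j}})$ are trivialization-invariant iteration defects bounded by Lemma \ref{lem:iterate} applied respectively to $\gamma^{p}$ and to each $\beta_{l}^{m_{l}}$. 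A short direct computation using $\mu_{\disk}(\gamma^{p})=3$ and Lemma \ref{lem;key} gives the crucial cancellation $\mu_{\disk}(\gamma^{p^{2}})+B=3p$, eliminating the $\gamma^{p^{2}}$-term and reducing the identity to
\begin{equation*}
p\cdot\ind(u) \;=\; p\bigl(2g(u)+L-1\bigr)+3p-\sum_{j}\mu_{\disk}(\beta_{j}^{pn_{j}})-A.
\end{equation*}

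Dynamical convexity, together with Proposition \ref{conleycovering}(2) applied to the smallest contractible iterate $\beta_{j}^{o_{j}}$ (whose Conley--Zehnder index is at least $3$), then gives the lower bound $\mu_{\disk}(\beta_{j}^{pn_{j}})\geq 2pn_{j}/o_{j}+1$, where $o_{j}\mid p$ is the order of $[\beta_{j}]$ in $H_{1}(L(p,1))$. Substituting this together with the Lemma \ref{lem:iterate}-bound on $-A$ forces $\ind(u)<0$, contradicting the existence of $u$ as a somewhere injective $J$-holomorphic curve for the generic $J$.

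The main obstacle will be controlling $-A$ sharply when several negative ends are asymptotic to the same simple elliptic orbit $\beta_{j}$: here the number of ends at $\beta_{j}$ and the total multiplicity $n_{j}$ differ, and one must combine the super-additivity $\sum_{l}\lfloor pm_{l}\theta_{j}\rfloor\leq\lfloor pn_{j}\theta_{j}\rfloor$ with the near-equality $\sum_{l}\lfloor m_{l}\theta_{j}\rfloor\geq n_{j}\theta_{j}-|l_{j}|$ to keep the iteration-defect correction from absorbing the dynamical-convexity gain coming from $\mu_{\disk}(\beta_{j}^{pn_{j}})$.
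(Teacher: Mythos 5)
Your approach is genuinely different from the paper's, and unfortunately it does not close. The bookkeeping up to the cancellation is fine: with $\mathrm{wind}(\tau_{F},\tau_{\disk})=2-p$ and $\mu_{\tau_{F}}(\gamma^{p})=2p-1$ one indeed gets $\mu_{\disk}(\gamma^{p^{2}})+B=p(2p-1)+2p(2-p)=3p$, so your identity
\begin{equation*}
p\cdot\ind(u)=p\bigl(2g(u)+L-1\bigr)+3p-\sum_{j}\mu_{\disk}(\beta_{j}^{pn_{j}})-A
\end{equation*}
is correct. The problem is the last step: the available lower bounds do \emph{not} force $\ind(u)<0$. Each negative end contributes $+p$ through $-\chi$, while Lemma \ref{lem:iterate} only gives $-A\leq (p-1)\cdot(\text{number of elliptic negative ends})$ and dynamical convexity only gives $\mu_{\disk}(\beta_{j}^{pn_{j}})\geq 3$ (your sharper bound $2pn_{j}/o_{j}+1$ reduces to $3$ precisely in the dangerous case $n_{j}=1$, $o_{j}=p$). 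The net contribution of a negative end of multiplicity one at a non-contractible elliptic orbit is therefore bounded above by $p-3+(p-1)>0$ for $p\geq 3$, so adding negative ends can \emph{increase} the right-hand side. Concretely, for $p=3$ take $g=0$ and two negative ends of multiplicity one at non-contractible elliptic orbits $\beta_{1},\beta_{2}$ with $[\beta_{1}]+[\beta_{2}]=0$, $\mu_{\disk}(\beta_{1}^{3})=3$ with monodromy angle in $(2/3,1)$ (so $\mu_{\disk}(\beta_{1}^{3})+A_{1}=1$) and $\mu_{\disk}(\beta_{2}^{3})+A_{2}=5$: your identity gives $3\ind(u)=3+9-1-5=6$, i.e.\ $\ind(u)=2\geq 1$. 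So the Fredholm index places no obstruction on such a curve, and no refinement of the iteration-defect estimate can change this; the lemma is simply not an index statement.

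The paper's proof uses an entirely different mechanism, namely that $\gamma\in\mathcal{S}_{p}$ bounds a page $u(\DD)$ which is a \emph{Birkhoff section}. One compares the algebraic intersection of $u(\DD)$ with the slices $\pi(h(\Sigma)\cap(\{\pm s\}\times Y))$: the asymptotic winding bound $w\leq\lfloor\mu_{\tau_{F}}(\gamma^{p})/2\rfloor=p-1$ forces the positive end to approach $\gamma$ with slope at most $(p-1)/p$, equal to the slope of the cable $F^{-1}(u(\DD)\cap\partial\bar{U})$, so the top slice meets $u(\DD)$ non-positively; whereas every negative end either spirals around $\gamma$ with slope at least $1$ (winding $\geq\lceil\mu_{\tau_{F}}(\gamma^{k})/2\rceil=k$) or limits to an orbit that must cross the Birkhoff section, so the bottom slice meets $u(\DD)$ at least once positively. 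Positivity of intersections of $h$ with $\RR\times\gamma$ then gives the contradictory monotonicity. If you want to salvage your write-up, you would need to replace the index estimate by this intersection-theoretic input; the global-surface-of-section property is what is really being used, and it is invisible to the Fredholm index.
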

\begin{proof}[\bf Proof of Lemma \ref{lem:boundaryop}]
In this proof, we  set $Y=L(p,1)$.
    Suppose that $h:(\Sigma,j)\to (\RR \times Y,J)$ is  a somewhere injective curve satisfying the properties.
      
Recall that $\gamma\in \mathcal{S}_{p}$ means that there is a rational Seifert surface $u:\DD \to L(3,1)$ with $u(e^{2\pi t})=\gamma(pT_{\gamma_{2}}t)$ and $sl_{\xi}^{\QQ}(\gamma)=-\frac{1}{p}$ such that $u(\DD)$ is a Birkhoff section for $X_{\lambda}$ of disk type. We note that $H_{1}(Y\backslash \gamma) \cong \ZZ$. 

For a sufficiently large $s>>0$, consider $\pi(h(\Sigma)\cap ([-s,s]\times Y))\subset Y$.  Since $\#(\gamma \cap \pi(h(\Sigma)\cap ([-s,s]\times Y)))\geq 0$ because of positivity of intersection, it follows topologically that $\#(u(\DD) \cap \pi(h(\Sigma)\cap (\{s\}\times Y)))\geq \#(u(\DD) \cap \pi(h(\Sigma)\cap (\{-s\}\times Y)))$.  This contradicts  the next claim.
\begin{cla}\label{cla:wind}
   \item[(1).]  $0\geq \#(u(\DD) \cap \pi(h(\Sigma)\cap (\{s\}\times Y)))$
\item[(2).] $\#(u(\DD) \cap \pi(h(\Sigma)\cap (\{-s\}\times Y)))\geq 1$
 
\end{cla}
\begin{proof}[\bf Proof of Claim \ref{cla:wind}]
 Take  a Martinet tube $F:\RR/T_{\gamma}\ZZ \times \DD_{\delta} \to \Bar{U}$ for a sufficiently small $\delta>0$ onto a small  open neighbourhood $\gamma \subset \Bar{U}$. To prove Claim \ref{cla:wind}, we recall the following property.
 \begin{cla}
     Let $g:([0,\infty)\times S^{1},j_{0}) \to (\RR \times Y,J)$ be a $J$-holomorphic curve asymptotic to $\gamma$ with multiplicity $m$ where $S^{1}=\RR/\ZZ$ and $j_{0}$ is the standard complex structure. Let $\mathrm{pr}_{\DD_{\delta}}$ denote the projection  $\RR/T_{\gamma}\ZZ \times \DD_{\delta}\to \DD_{\delta}$. Define  $w(g)$ by the winding number of $\RR/\ZZ \to \DD_{\delta}$ which maps $t\in \RR /\ZZ \to \mathrm{pr}_{\DD_{\delta}}\circ F^{-1}(g(s,t))$. Then
     \begin{equation*}
         w(g)\leq \lfloor \frac{\mu_{\tau_{F}}(\gamma^{m}) }{2} \rfloor. 
     \end{equation*}
 \end{cla}
 The above claim follows immediately  from the properties of the Conley-Zehnder index and winding number of eigenfunctions of a certain self-adjoint operator. See \cite{HWZ1} and \cite{HWZ4}
     
We may take $F$ so that $F^{-1}( u (\DD)\cap\partial \Bar{U})$ is a $(p,p-1)$ cable. 
Let  $\tau_{F}:\gamma^{*}\xi \to \RR/T_{\gamma}\ZZ \times \RR^{2}$ be a trivialization induced by $F$. Let $\theta \in \RR \backslash \QQ$ denote the monodromy angle of $\gamma$ with respect to $\tau_{0}$. That is $\mu_{\tau_{0}}(\gamma^{k})=2\lfloor k \theta \rfloor+1$ for any $k\in \ZZ_{>0}$. It follows from the same argument that  $1-\frac{1}{p}<\theta<1$. In particular, this means that $\lfloor k \theta \rfloor=k-1$ for any $1 \leq k \leq p$. 

At first, we consider $F^{-1}(\pi(h(\Sigma)\cap (\{s\}\times Y))\cap\partial \Bar{U})$. Let $\mathrm{pr}_{\DD_{\delta}}:\RR / T_{\gamma} \ZZ \times \DD_{\delta}\to \DD_{\delta}$ denote the projection. Then it follows that the winding number of $\mathrm{pr}_{\DD_{\delta}} \circ F^{-1}(\pi(h(\Sigma)\cap (\{s\}\times Y))\cap\partial \Bar{U})\subset\DD_{\delta}$ is at most $\lfloor \frac{\mu_{\tau_{0}}(\gamma^p)}{2} \rfloor=p-1$. This means that the slope of $F^{-1}(\pi(h(\Sigma)\cap (\{s\}\times Y))\cap\partial \Bar{U})$ is at most $\frac{p-1}{p}$.
Since the multiplicity of the positive end is $p$, it follows  that $0\geq \#(u(\DD) \cap \pi(h(\Sigma)\cap (\{s\}\times Y)))$. This proves (1).

Next, we consider $F^{-1}(\pi(h(\Sigma)\cap (\{-s\}\times Y))\cap\partial \Bar{U})$. We note that the total multiplicity of negative ends asymptotic to $\gamma$ is at most $p-1$ because of the Stokes' theorem. Suppose that a negative end of $h$ is asymptotic to $\gamma$ with multiplicity $1\leq k \leq p-1$. Then it follows that the winding number of $\mathrm{pr}_{\DD_{\delta}} \circ F^{-1}(\pi(h(\Sigma)\cap (\{s\}\times Y))\cap\partial \Bar{U})\subset\DD_{\delta}$ is at least $\lceil\frac{\mu_{\tau_{0}}(\gamma^k)}{2} \rceil=k$.
This means that the slope of $F^{-1}(\pi(h(\Sigma)\cap (\{s\}\times Y))\cap\partial \Bar{U})$ is at least $1$. On the other hand,   $F^{-1}( u (\DD)\cap\partial \Bar{U})$ is a $(p,p-1)$ cable and hence the slope is $\frac{p-1}{p}$. This means that the negative end intersects $u(\DD)$ positively. Therefore, if there is a negative end asymptotic to $\gamma$, we have $\#(u(\DD) \cap \pi(h(\Sigma)\cap (\{-s\}\times Y)))\geq 1$. 
Finally suppose that there is no negative end asymptotic to $\gamma$. Since $u(\DD)$ is a Birkhoff section for $X_{\lambda}$,  any periodic orbit other than $\gamma$ intersects $u(\DD)$ positively. Therefore it follows from the assumption that $\#(u(\DD) \cap \pi(h(\Sigma)\cap (\{-s\}\times Y)))\geq 1$. This completes the proof.
\end{proof}
Having Claim \ref{cla:wind}, Lemma \ref{lem:boundaryop} follows.
\end{proof}
\begin{lem}\label{lem:boundvanish}
    For any $\gamma \in \mathcal{S}_{p}$ with $\mu_{\disk}(\gamma^{p})=3$, $\partial_{J} \alpha_{\gamma} =0$.
\end{lem}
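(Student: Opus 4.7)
The plan is to argue by contradiction. Suppose $\partial_{J}\alpha_{\gamma}\neq 0$, so that there exists an ECH generator $\beta$ with $[\beta]=0$ and a current $u \in \mathcal{M}^{J}_{1}(\alpha_{\gamma},\beta)$. First I would observe that $\beta \neq \emptyset$: Lemma \ref{lem:ECHgen} gives $I(\alpha_{\gamma}, \emptyset) = 2$, while the differential counts $I=1$ currents. Next I would decompose $u = u_{0}\cup u_{1}$ according to Proposition \ref{indexproperties}(3). Since $\alpha_{\gamma}=(\gamma,p)$ only involves the simple orbit $\gamma$, every trivial cylinder in $u_{0}$ must lie over $\gamma$, so $u_{0}=k\cdot (\mathbb{R}\times \gamma)$ for some integer $0 \leq k \leq p$, and $u_{1}$ is embedded with $\ind(u_{1})=1$.

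The crux is the outgoing partition at $\gamma$. In the proof of Lemma \ref{lem:ECHgen} the monodromy angle $\theta$ of $\gamma$ with respect to the tube trivialization $\tau_{F}$ (chosen so that $\mu_{\tau_{F}}(\gamma)=1$) was shown to satisfy $1-\tfrac{1}{p}<\theta<1$. A direct computation then gives $S_{-\theta}\cap \{1,\ldots,p\}=\{1,2,\ldots,p\}$, so $P^{\mathrm{out}}_{\theta}(p)=P^{\mathrm{in}}_{-\theta}(p)=(p)$. By Proposition \ref{partitioncondition}, the multiplicities of the positive ends of $u_{1}$ at covers of $\gamma$ therefore form an initial segment of the one-term tuple $(p)$; thus they are either $(p)$ or empty.

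This produces two cases. In the first, $u_{1}$ carries a single positive end at $\gamma^{p}$ and $u_{0}=\emptyset$, so $u=u_{1}$. Any extra connected components of $u_{1}$ with no positive ends would violate $\int d\lambda \geq 0$ on a $J$-holomorphic curve (with equality only for trivial cylinders), so $u_{1}$ is connected; being embedded it is somewhere injective, and it has at least one negative end because $\beta \neq \emptyset$. This is exactly the configuration excluded by Lemma \ref{lem:boundaryop}, a contradiction. In the second case $u_{1}$ has no positive ends at all; Stokes' theorem together with $d\lambda \geq 0$ forces $\int_{u_{1}} d\lambda \leq 0$ and hence $=0$, so $u_{1}$ is a union of trivial cylinders, and since by construction $u_{1}$ contains no trivial cylinder components, $u_{1}=\emptyset$. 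Then $u=u_{0}=p\cdot(\mathbb{R}\times \gamma)$ has $I(u)=0$, contradicting $I(u)=1$.

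The main obstacle I expect is not conceptual — once Lemma \ref{lem:boundaryop} and the partition computation are available the argument cascades — but rather the bookkeeping: one must carefully justify that every trivial cylinder in $u_{0}$ sits over $\gamma$ (using that $\alpha_{\gamma}$ only involves $\gamma$), that the partition condition of Proposition \ref{partitioncondition} applies to $u_{1}$ as a current rather than component by component, and that the Stokes inequality is strict enough on non-trivial components to eliminate spurious components of $u_{1}$ carrying no positive ends in both cases.
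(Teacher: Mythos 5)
Your proof is correct and follows essentially the same route as the paper: the partition condition (computed from the monodromy angle $\theta\in(1-\tfrac{1}{p},1)$ established in Lemma \ref{lem:ECHgen}) forces any curve counted by $\partial_J\alpha_\gamma$ to have a single positive end at $\gamma^p$, and Lemma \ref{lem:boundaryop} then excludes such a curve. Your treatment is somewhat more careful than the paper's one-paragraph argument — you explicitly rule out $\beta=\emptyset$, handle the $u_0\cup u_1$ decomposition, and dispose of the degenerate case where $u_1$ has no positive ends — but these are details the paper leaves implicit rather than a different method.
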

\begin{proof}[\bf Proof of Lemma \ref{lem:boundvanish}]
    Let $\beta$ be an ECH generator with $I(\alpha_{\gamma},\beta)$. Let $h \in \mathcal{M}^{J}(\alpha_{\gamma},\beta)$. It follows from the partition condition that the number of  positive ends of $h$ is one and the positive end is  asymptotic to $\gamma$ with multiplicity $p$. According to Lemma \ref{lem:boundaryop}, such a $h$ does not exist. This means that $\mathcal{M}^{J}(\alpha_{\gamma},\beta) = \emptyset$. Hence we have $\partial_{J} \alpha_{\gamma} =0$.
\end{proof}
Now, we focus on $Y \cong L(3,1)$. Since $\partial_{J} \alpha_{\gamma} =0$ for any $\gamma \in \mathcal{S}_{3}$ with $\mu_{\disk}(\gamma^3)=3$, we may consider $\alpha_{\gamma}$ as an element in $ECH_{2}(Y,\lambda,0)$. The following lemma means that $\alpha_{\gamma}$ is not zero in $ECH_{2}(Y,\lambda,0)$.

\begin{lem}\label{nonzero}
     For any $\gamma \in \mathcal{S}_{3}$ with $\mu_{\disk}(\gamma^3)=3$,  $0\neq \langle \alpha_{\gamma} \rangle = \langle(\gamma,3) \rangle \in ECH_{2}(Y,\lambda,0)$.
\end{lem}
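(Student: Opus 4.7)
\textbf{Proof proposal for Lemma \ref{nonzero}.}

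The plan is to reduce the non-vanishing of $\langle \alpha_\gamma \rangle$ in $ECH_2(Y,\lambda,0)$ to the explicit computation of $U\langle \alpha_\gamma \rangle$. By Proposition \ref{lenssp}(1), the $U$-map gives an isomorphism $U\colon ECH_2(L(3,1),\lambda,0) \xrightarrow{\cong} ECH_0(L(3,1),\lambda,0)$, and by Proposition \ref{lenssp}(2) together with $ECH_0 \cong \mathbb{F}$, the contact invariant $c(\xi_{\mathrm{std}}) = \langle \emptyset \rangle$ is a generator of $ECH_0$. Therefore $\langle \alpha_\gamma \rangle \neq 0$ in $ECH_2$ if and only if $U\langle \alpha_\gamma \rangle = \langle \emptyset \rangle$, equivalently if and only if the mod-$2$ count $\langle U_{J,z}\alpha_\gamma,\emptyset\rangle$ is $1$ for some generic admissible $J$ and base point $z \in Y$. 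So it suffices to produce a pair $(J,z)$ exhibiting exactly one $J$-holomorphic curve, up to $\mathbb{R}$-translation, in $\mathcal{M}_2^J(\alpha_\gamma,\emptyset)$ passing through $(0,z)$.

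To exhibit such a curve, I would use the rational open book decomposition supplied by Theorem \ref{fundament}: since $\gamma \in \mathcal{S}_3$ with $\mu_{\disk}(\gamma^3)=3$, the orbit $\gamma^3$ binds a Birkhoff section of disk type which is a page of a rational open book decomposition of $L(3,1)$ whose pages are all Birkhoff sections. Choose an admissible almost complex structure $J$ adapted to this open book (for instance by standard neck-stretching near the binding together with the construction of Wendl type for foliations by pseudoholomorphic pages) so that the pages lift to an $S^1$-family of genus-$0$ $J$-holomorphic planes in $\mathbb{R}\times Y$, each with a single positive end at $\alpha_\gamma=(\gamma,3)$ and no negative ends. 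That each such plane $u$ has ECH index $I(u)=2$ is precisely the content of the computation in Lemma \ref{lem:ECHgen}. Since the pages foliate $Y\setminus\gamma$, for any generic $z \in Y\setminus\gamma$ exactly one page passes through $z$, producing exactly one curve in $\mathcal{M}_2^J(\alpha_\gamma,\emptyset)/\mathbb{R}$ through $(0,z)$.

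The remaining step is to argue that this is the \emph{only} contribution to $\langle U_{J,z}\alpha_\gamma,\emptyset\rangle$. Any other curve $u' \in \mathcal{M}_2^J(\alpha_\gamma,\emptyset)$ contributing to the $U$-map must, by Lemma \ref{lem:openbook} applied with the ECH generator $\alpha_\gamma=(\gamma,3)$, arise as a page of a rational open book decomposition of $L(3,1)$ supporting $\xi_{\mathrm{std}}$ whose binding is $\gamma$ (occurring with total multiplicity $3$) and whose pages are embedded disks. The monodromy of such an open book with one disk-type binding is uniquely determined up to isotopy, so the resulting rational open book is unique up to isotopy through open books with binding $\gamma$; consequently the connected component of $u'$ in $\mathcal{M}^J(\alpha_\gamma,\emptyset)/\mathbb{R}$ coincides with the $S^1$-family produced above, and hence $u'$ is $\mathbb{R}$-translation equivalent to the page through $z$. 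This gives $\langle U_{J,z}\alpha_\gamma,\emptyset\rangle \equiv 1 \pmod 2$, hence $U\langle \alpha_\gamma\rangle = \langle \emptyset\rangle \neq 0$, and therefore $\langle \alpha_\gamma\rangle \neq 0$ in $ECH_2(Y,\lambda,0)$.

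The hard part I anticipate is the geometric construction of the compatible $J$ for which the pages of the rational open book lift to an honest family of $J$-holomorphic planes in the symplectization, together with verifying that this family is an entire connected component of $\mathcal{M}_2^J(\alpha_\gamma,\emptyset)/\mathbb{R}$ rather than merely a subfamily. The SFT-type compactness argument used at the end of the proof of Lemma \ref{lem:openbook} (using Proposition \ref{prp:immersed} to rule out breakings of ECH index $1$) should close the component up to an $S^1$-family, but careful bookkeeping of intersection numbers against the Birkhoff sections, in the spirit of the arguments in the proof of Lemma \ref{lem:boundaryop}, is what ensures no stray contribution to the $U$-count.
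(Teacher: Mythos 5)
Your overall strategy is the same as the paper's: both proofs reduce the non-vanishing of $\langle\alpha_{\gamma}\rangle$ to the chain-level count $\langle U_{J,z}\alpha_{\gamma},\emptyset\rangle=1$, obtained by realizing the pages of the rational open book of Theorem \ref{fundament} as an $S^{1}$-family of $J$-holomorphic planes asymptotic to $(\gamma,3)$ with exactly one member through a generic point $(0,z)$ (this is the paper's Claim \ref{inG}, imported from \cite{HrS}). The homological bookkeeping is only packaged differently: you invoke the isomorphism $U:ECH_{2}\to ECH_{0}$ and $c(\xi)\neq 0$ from Proposition \ref{lenssp}, while the paper introduces $\mathcal{G}=\{\alpha\,:\,\langle U_{J,z}\alpha,\emptyset\rangle\neq 0\}$ and shows via $U_{J,z}\partial_{J}=\partial_{J}U_{J,z}$ that the functional $\sum_{\alpha\in\mathcal{G}}\langle\cdot,\alpha\rangle$ vanishes on boundaries (Claim \ref{index1bound}); these are the same functional in disguise.

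Two of your steps need repair. First, the asserted equivalence between $U\langle\alpha_{\gamma}\rangle=\langle\emptyset\rangle$ and $\langle U_{J,z}\alpha_{\gamma},\emptyset\rangle=1$ is not automatic: a degree-zero cycle with $\emptyset$-coefficient $1$ could a priori still be a boundary. You need $\langle\partial_{J}\sigma,\emptyset\rangle=0$ for every generator $\sigma$ of degree one, which holds because an ECH-index-one curve with no negative ends would be an embedded Fredholm-index-one curve with no negative ends (Proposition \ref{indexproperties}(3), noting $u_{0}=\emptyset$ when there are no negative ends), and this is excluded by Proposition \ref{prp:immersed}(1); the paper isolates exactly this point in Claim \ref{index1bound}. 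Second, your uniqueness argument for the $U$-count does not work as stated: from the fact that the open book determined by another contributing curve $u'$ is \emph{isotopic} to the given one, you cannot conclude that $u'$ lies in the same connected component of $\mathcal{M}^{J}(\alpha_{\gamma},\emptyset)/\RR$, since isotopic open books with the same binding need not share any page. The mechanism actually used (implicitly in Claim \ref{inG}, following \cite{HrS}) is positivity of intersections: any two distinct curves in $\mathcal{M}^{J}(\alpha_{\gamma},\emptyset)$ are disjoint, so a curve outside the foliating $S^{1}$-family would have to miss all of $Y\setminus\gamma$, which is impossible; hence the whole moduli space is that single family and the count through $(0,z)$ is exactly one.
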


To prove Lemma \ref{nonzero}, define a set $\mathcal{G}$ consisting of ECH generators as
\begin{equation}
    \mathcal{G}:=\{\alpha|\,\,\langle U_{J,z} \alpha,\emptyset \rangle\neq 0 \,\, \}.
\end{equation}
Note that $\langle U_{J,z} \alpha,\emptyset \rangle\neq 0 $  if and only if  $\alpha \in \mathcal{G}$.
\begin{cla}\label{inG}
     For any $\gamma \in \mathcal{S}_{3}$ with $\mu_{\disk}(\gamma^3)=3$, $\alpha_{\gamma}\in \mathcal{G}$.
\end{cla}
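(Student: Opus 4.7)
The plan is to produce, for any such $\gamma$, a genuine $J$-holomorphic curve in $\mathcal{M}^{J}(\alpha_{\gamma},\emptyset)$ of ECH index $2$ passing through the marked point, and to verify that the total count modulo $2$ is odd.  Since the $U$-map is independent of $J$ and (generically) of $z$, it suffices to exhibit one adapted pair $(J,z)$ for which $\langle U_{J,z}\alpha_{\gamma},\emptyset\rangle\neq 0$.

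First, Theorem \ref{fundament} applied to $\gamma\in \mathcal{S}_{3}$ yields a rational open book decomposition $\pi:L(3,1)\bs \gamma \to S^{1}$ with binding $\gamma$ whose pages $\{D_{\theta}\}_{\theta\in S^{1}}$ are all Birkhoff sections of disk type for $X_{\lambda}$, each bounded by $\gamma^{3}$.  I would then choose an admissible almost complex structure $J_{0}$ on $\RR \times L(3,1)$ adapted to this decomposition in the standard sense: near $\gamma$ it is modeled on the Martinet tube with the appropriate asymptotic behavior, and elsewhere it is chosen so that every page $D_{\theta}$ lifts to an embedded $J_{0}$-holomorphic disk $\tilde{D}_{\theta}\subset \RR \times L(3,1)$ with a single positive puncture asymptotic to $\gamma^{3}$ and no negative puncture.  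This construction, of the type carried out in \cite{HWZ1} and \cite{HrS}, produces a family $\{\tilde{D}_{\theta}\}_{\theta\in S^{1}}\subset \mathcal{M}^{J_{0}}(\alpha_{\gamma},\emptyset)$ whose quotient by $\RR$-translation is a compact $S^{1}$, and by Lemma \ref{lem:ECHgen} each such curve has ECH index exactly $2$.

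Second, pick a generic base point $z\in L(3,1)\bs \gamma$; since the pages foliate $L(3,1)\bs \gamma$, exactly one page $D_{\theta(z)}$ contains $z$, so exactly one class $\tilde{D}_{\theta(z)}\in \mathcal{M}^{J_{0}}(\alpha_{\gamma},\emptyset)/\RR$ from this $S^{1}$-family passes through $(0,z)$, contributing $1$ modulo $2$ to $\langle U_{J_{0},z}\alpha_{\gamma},\emptyset\rangle$.  To rule out other contributions, I would combine Lemma \ref{lem:indechcomp} and Lemma \ref{lem:openbook}: any additional component of $\mathcal{M}^{J_{0}}_{2}(\alpha_{\gamma},\emptyset)/\RR$ meeting $(0,z)$ would itself be a compact $S^{1}$-family realizing a rational open book decomposition of $(L(3,1),\xi_{\mathrm{std}})$ with binding $\gamma$ and disk pages.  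For an almost complex structure adapted to our open book, positivity of intersection together with the fact that the $\tilde{D}_{\theta}$ already saturate the geometric constraints from Theorem \ref{fundament} prevents any further component from appearing, so the total count modulo $2$ is $1$.

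The main obstacle is making the adapted $J_{0}$ simultaneously ECH-admissible and regular enough that the direct count of $J_{0}$-holomorphic curves is valid.  The standard workaround is to invoke $J$-invariance of the $U$-map at the chain-homotopy level: one chooses a small generic perturbation $J$ of $J_{0}$, applies the cobordism argument behind the well-definedness of the $U$-map (see \S\ref{echdef} and \cite{T1}) to propagate the count modulo $2$ from $J_{0}$ to $J$, and concludes that $\langle U_{J,z}\alpha_{\gamma},\emptyset\rangle = 1 \neq 0$, so $\alpha_{\gamma}\in \mathcal{G}$, as desired.
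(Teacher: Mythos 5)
Your proposal follows essentially the same route as the paper: both arguments take the $S^{1}$-family of $J$-holomorphic pages of the Hryniewicz--Salom\~{a}o rational open book from Theorem \ref{fundament}, observe that these pages foliate $L(3,1)\bs\gamma$ so that exactly one curve of $\mathcal{M}^{J}(\alpha_{\gamma},\emptyset)/\RR$ passes through $(0,z)$, and conclude $\langle U_{J,z}\alpha_{\gamma},\emptyset\rangle=1$. Your additional care about transferring the count from an adapted $J_{0}$ to a generic $J$ and about excluding extra components is a reasonable elaboration of points the paper treats implicitly, but it does not change the substance of the argument.
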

\begin{proof}[\bf Proof of Claim \ref{inG}]
While this proof is completely the same with \cite[Lemma 4.6]{Shi}, we give it below.

Recall that each page of the rational open book decomposition constructed in Theorem \ref{fundament}( \cite{HrS}) is the projection of $J$-holomorphic curve from $(\mathbb{C},i)$ to $L(3,1)$. Moreover in this case,  $:\mathcal{M}^{J}(\alpha_{\gamma},\emptyset)/\mathbb{R}$ is compact and  any two distinct elements $u_{1},u_{2}\in \mathcal{M}^{J}(\alpha_{\gamma},\emptyset)$ has no intersection point. 
Hence $:\mathcal{M}^{J}(\alpha_{\gamma},\emptyset)/\mathbb{R}\cong S^{1}$ and
     for a section $s:\mathcal{M}^{J}(\alpha_{\gamma},\emptyset)/\mathbb{R}\to \mathcal{M}^{J}(\alpha_{\gamma},\emptyset)$, $\bigcup_{\tau\in \mathcal{M}^{J}(\alpha_{\gamma},\emptyset)/\mathbb{R}}\overline{\pi(s(\tau))} \to \mathcal{M}^{J}(\alpha_{\gamma},\emptyset)/\mathbb{R}$ is an (rational) open book decomposition of $L(2,1)$. This implies that for $z\in L(3,1)$ not on $\gamma$, there is exactly one $J$-holomorphic curve in $ \mathcal{M}^{J}(\alpha_{\gamma},\emptyset)$ through $(0,z)\in \mathbb{R}\times L(3,1)$. Therefore we have $\langle U_{J,z} \alpha_{\gamma},\emptyset \rangle\neq 0 $.
\end{proof}
\begin{cla}\label{index1bound}
Let $\alpha \in \mathcal{G}$. Suppose that $\beta$ is an ECH generator with $I(\beta,\alpha)=1$.
Then 
\begin{equation}
    \sum_{\alpha\in \mathcal{G}}\langle \partial_{J}\beta,\alpha \rangle=0
\end{equation}
\end{cla}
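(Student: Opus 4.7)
The plan is to exploit the chain-level identity $\partial_J \circ U_{J,z} = U_{J,z} \circ \partial_J$, which is a standard property of the $U$-map (see \cite[\S 2.5]{HT3}), and to extract the coefficient of $\emptyset$ on both sides after applying them to $\beta$. Writing out each side separately, I will show that the left-hand side vanishes automatically while the right-hand side is precisely $\sum_{\alpha\in\mathcal{G}}\langle \partial_J\beta,\alpha\rangle$, which then gives the claim.

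For the left-hand side, expanding $U_{J,z}\beta$ and pairing with $\partial_J$ yields
\[
\langle \partial_J(U_{J,z}\beta),\emptyset\rangle \;=\; \sum_{\sigma}\langle U_{J,z}\beta,\sigma\rangle\,\langle \partial_J\sigma,\emptyset\rangle.
\]
Each coefficient $\langle \partial_J\sigma,\emptyset\rangle$ counts index-one $J$-holomorphic currents from $\sigma$ to the empty orbit set. In particular, such a curve has no negative end, so its $\mathbb{R}$-invariant part $u_0$ must be empty (an $\mathbb{R}$-invariant cylinder would contribute a matching negative end). Proposition \ref{indexproperties}(3) then forces $u=u_1$ to be embedded with $\ind(u)=1$, and Proposition \ref{prp:immersed}(1) rules such a curve out. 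Hence every $\langle \partial_J\sigma,\emptyset\rangle$ vanishes, and the left-hand side is zero.

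For the right-hand side, unfolding $\partial_J\beta$ gives
\[
\langle U_{J,z}(\partial_J\beta),\emptyset\rangle \;=\; \sum_{\alpha}\langle \partial_J\beta,\alpha\rangle\,\langle U_{J,z}\alpha,\emptyset\rangle.
\]
Because we work over $\mathbb{F}=\mathbb{F}_2$, the definition of $\mathcal{G}$ gives $\langle U_{J,z}\alpha,\emptyset\rangle=1$ exactly when $\alpha\in\mathcal{G}$ and $0$ otherwise, so the sum collapses to $\sum_{\alpha\in\mathcal{G}}\langle \partial_J\beta,\alpha\rangle$. Equating the two sides produces the identity. The only non-routine input is Proposition \ref{prp:immersed}(1), which is precisely where dynamical convexity of $(L(3,1),\lambda)$ enters; the rest is algebraic bookkeeping using Proposition \ref{indexproperties}(3) and the definition of $\mathcal{G}$.
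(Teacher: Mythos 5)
Your proof is correct and follows essentially the same route as the paper: expand both sides of $\langle U_{J,z}\partial_{J}\beta,\emptyset \rangle=\langle \partial_{J}U_{J,z}\beta,\emptyset \rangle$ using $U_{J,z}\partial_{J}=\partial_{J}U_{J,z}$, and observe that $\langle U_{J,z}\alpha,\emptyset\rangle=1$ exactly for $\alpha\in\mathcal{G}$ over $\mathbb{F}$. The only difference is that you explicitly justify why $\langle \partial_{J}U_{J,z}\beta,\emptyset \rangle=0$ (via Proposition \ref{indexproperties}(3) and Proposition \ref{prp:immersed}(1)), a step the paper asserts without comment.
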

\begin{proof}[\bf Proof of Claim \ref{index1bound}]

As the same with Claim \ref{inG},  this proof is completely the same with \cite[Lemma 4.7]{Shi}. We give it below.

Write
\begin{equation}
    \partial_{J}\beta=\sum_{\alpha\in \mathcal{G}}\langle \partial_{J}\beta,\alpha \rangle \alpha+\sum_{I(\beta,\sigma)=1,\sigma\notin \mathcal{G}}\langle \partial_{J}\beta,\sigma \rangle \sigma.
\end{equation}
Then we have
\begin{equation}
   \langle U_{J,z}\partial_{J}\beta,\emptyset \rangle=\sum_{\alpha\in \mathcal{G}}\langle \partial_{J}\beta,\alpha \rangle \langle U_{J,z} \alpha,\emptyset \rangle +\sum_{I(\beta,\sigma)=1,\sigma\notin \mathcal{G}}\langle \partial_{J}\beta,\sigma \rangle \langle \sigma,\emptyset \rangle=\sum_{\alpha\in \mathcal{G}}\langle \partial_{J}\beta,\alpha \rangle
\end{equation}
Here we use that for $\alpha\in \mathcal{G}$, $\langle U_{J,z} \alpha,\emptyset \rangle=1$ and for $\sigma$ with $\sigma \notin \mathcal{G}$, $\langle U_{J,z} \sigma,\emptyset \rangle=0$.
Since $U_{J,z}\partial_{J}=\partial_{J}U_{J,z}$, we have $\langle U_{J,z}\partial_{J}\beta,\emptyset \rangle=\langle \partial_{J}U_{J,z}\beta,\emptyset \rangle=0$. This completes the proof.
\end{proof}

\begin{proof}[\bf Proof of Lemma \ref{nonzero}]
Suppose that $0=\langle \alpha_{\gamma} \rangle  \in ECH_{2}(Y,\lambda,0)$.
Then there are ECH generators $\beta_{1},...\beta_{j}$ with $I(\beta_{i},\alpha_{\gamma})=1$ for any $i$ such that $\partial_{J}(\beta_{1}+...+\beta_{j})=\alpha_{\gamma}$. From Lemma \ref{index1bound}, we have 
\begin{equation}
    \sum_{1\leq i \leq j} \sum_{\alpha\in \mathcal{G}}\langle \partial_{J}\beta_{i},\alpha \rangle=\sum_{\alpha\in \mathcal{G}}\langle \alpha_{\gamma},\alpha \rangle=0.
\end{equation}
But since $\alpha_{\gamma}\in \mathcal{G}$,  $\sum_{\alpha\in \mathcal{G}}\langle \alpha_{\gamma},\alpha \rangle =1$. This is a contradiction. We complete the proof.
\end{proof}
\begin{proof}[\bf Proof of Theorem \ref{maintheorem} under non-degeneracy]

 At first, we estimate $c_{1}^{\mathrm{Ech}}(L(3,1)\lambda)$ from below.   It follows from the definitin of ECH spectrum that we can take an ECH generator $\alpha$ such that $\langle U_{J,z}\alpha,\emptyset \rangle \neq 0$ and $A(\alpha)\leq c_{1}^{\mathrm{ECH}}(L(3,1),\lambda)$. According to Proposition \ref{main;prop}, $\alpha$ contains  $\gamma \in \mathcal{S}_{3}$ such that $\mu_{\disk}(\gamma^3)=3$. We may assume that $\gamma$ has the minimum period in $\alpha$. Then $\int_{\gamma}\lambda\leq \frac{1}{3}A(\alpha)\leq \frac{1}{3}c_{1}^{\mathrm{ECH}}(L(3,1),\lambda)$. This means that there is  a $\gamma \in \mathcal{S}_{3}$ with $\mu_{\disk}(\gamma^3)=3$ such that  $\int_{\gamma}\lambda\leq \frac{1}{3}A(\alpha)\leq \frac{1}{3}c_{1}^{\mathrm{ECH}}(L(3,1),\lambda)$.

 At last, we estimate $c_{1}^{\mathrm{EcH}}(L(3,1)\lambda)$ from above. 
 Since  $0\neq \langle \alpha_{\gamma} \rangle = \langle(\gamma,3) \rangle \in ECH_{2}(Y,\lambda,0)$ for any $\gamma \in \mathcal{S}_{3}$ with $\mu_{\disk}(\gamma^3)=3$ (Lemma \ref{nonzero}),  we have $c_{1}^{\mathrm{EcH}}(L(3,1)\lambda)\leq A(\alpha_{\gamma})$ for any $\gamma \in \mathcal{S}_{3}$ with $\mu_{\disk}(\gamma^3)=3$. This means that  $ \frac{1}{3}c_{1}^{\mathrm{ECH}}(L(3,1)\lambda)\leq \inf_{\gamma\in \mathcal{S}_{3},\,\,\mu_{\disk}(\gamma^3)=3}\int_{\gamma}\lambda$. 

 In summary, we have $ \frac{1}{3}c_{1}^{\mathrm{ECH}}(L(3,1)\lambda)=\inf_{\gamma\in \mathcal{S}_{3},\,\mu_{\disk}(\gamma^3)=3}\,\int_{\gamma}\lambda$.
\end{proof}
\section{Extend the results to degenerate cases}
In this subsection, we extend the above result to degenerate case as a limit of non-degenerate result.
The content in this section is completely the same with \cite[\S 5]{Shi}. However we provide the details as follows.

At first, we show;
\begin{prp}\label{degeneratel2}
     Assume that $(L(3,1),\lambda)$ is strictly convex. Then there exists a simple orbit $\gamma\in \mathcal{S}_{3}$ such that $\mu_{\disk}(\gamma^{3})=3$ and $\int_{\gamma}\lambda = \frac{1}{3}\,c_{1}^{\mathrm{ECH}}(L(3,1),\lambda)$. In particular,
    \begin{equation}
   \inf_{\gamma\in \mathcal{S}_{3},\mu_{\disk}(\gamma^{3})=1}\int_{\gamma}\lambda \leq \frac{1}{3}\,c_{1}^{\mathrm{ECH}}(L(3,1),\lambda).
\end{equation}
\end{prp}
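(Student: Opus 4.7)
The plan is to reduce the strictly convex (possibly degenerate) case to the non-degenerate result already established in the preceding section by a standard approximation and compactness argument.

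First, I would approximate $\lambda$ by a sequence of non-degenerate contact forms. Since $(L(3,1),\lambda)$ is strictly convex, it arises as the quotient of $(\partial S,\lambda_{0}|_{\partial S})$ for a strictly convex $\ZZ/3$-invariant domain $S\subset \CC^{2}$. By a small $C^{\infty}$-perturbation of $S$ inside the class of strictly convex $\ZZ/3$-invariant domains, I can produce a sequence $\lambda_{n}=f_{n}\lambda$ with $f_{n}\to 1$ in $C^{\infty}$ such that each $(L(3,1),\lambda_{n})$ is non-degenerate and strictly convex, hence in particular dynamically convex. Applying the non-degenerate version of the main theorem (already proved) to each $\lambda_{n}$ yields a simple orbit $\gamma_{n}\in \mathcal{S}_{3}(L(3,1),\lambda_{n})$ with $\mu_{\disk}(\gamma_{n}^{3})=3$ and $\int_{\gamma_{n}}\lambda_{n}=\tfrac{1}{3}c_{1}^{\mathrm{ECH}}(L(3,1),\lambda_{n})$.

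Next, I would extract a limit. By Proposition \ref{properties1}(2) and (4), the sequence $c_{1}^{\mathrm{ECH}}(L(3,1),\lambda_{n})$ converges to $c_{1}^{\mathrm{ECH}}(L(3,1),\lambda)$; in particular the periods $T_{\gamma_{n}}$ are uniformly bounded. By the Arzel\`a--Ascoli theorem applied to the Reeb flows $X_{\lambda_{n}}\to X_{\lambda}$, a subsequence of $\gamma_{n}$ (parametrized by arc length, say) converges in $C^{\infty}$ to a periodic orbit $\bar{\gamma}$ of $X_{\lambda}$ of period $\tfrac{1}{3}c_{1}^{\mathrm{ECH}}(L(3,1),\lambda)$. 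A priori $\bar{\gamma}$ may be a multiple cover $\gamma^{k}$ of a simple orbit $\gamma$; but in that case $\gamma$ still has period at most $\tfrac{1}{3}c_{1}^{\mathrm{ECH}}(L(3,1),\lambda)$, and the topological invariants inherited from $\gamma_{n}$ (unknottedness, rational self-linking) then transfer to $\gamma$ because these are computed from small tubular neighborhoods and are $C^{0}$-stable. Thus $\gamma$ is $3$-unknotted with $sl^{\QQ}_{\xi}(\gamma)=-\tfrac{1}{3}$, i.e.\ $\gamma\in\mathcal{S}_{3}(L(3,1),\lambda)$, and $\int_{\gamma}\lambda\le \tfrac{1}{3}c_{1}^{\mathrm{ECH}}(L(3,1),\lambda)$.

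For the index, I would use that $\mu_{CZ}$ is lower semi-continuous (stated in the preliminaries). Since the linearized Reeb flows along $\gamma_{n}^{3}$ converge to that along $\bar{\gamma}^{3}$, we get $\mu_{\disk}(\bar{\gamma}^{3})\le \liminf_{n}\mu_{\disk}(\gamma_{n}^{3})=3$. On the other hand, strict convexity implies dynamical convexity (including in the degenerate case, via the same approximation), so $\mu_{\disk}(\gamma^{3})\ge 3$ for any contractible iterate, and hence $\mu_{\disk}(\bar{\gamma}^{3})=3$. Combined with Proposition \ref{conleycovering}(2), this forces $k=1$, so $\bar{\gamma}=\gamma$ is itself simple, $\mu_{\disk}(\gamma^{3})=3$, and $\int_{\gamma}\lambda=\tfrac{1}{3}c_{1}^{\mathrm{ECH}}(L(3,1),\lambda)$.

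The main obstacle I expect is ruling out the possibility that the limit is a proper multiple cover and that the $\mathcal{S}_{3}$-property genuinely passes to the limit. Unknottedness and $sl^{\QQ}_{\xi}=-\tfrac{1}{3}$ are a priori only verified for $\gamma_{n}$, so one has to argue carefully that a $C^{\infty}$-small neighborhood of the image of $\bar{\gamma}$ contains the images of $\gamma_{n}$ for large $n$ and that the rational Seifert disks for $\gamma_{n}^{3}$ can be arranged to converge; this is where the multiplicity analysis and the lower-semi-continuity of $\mu_{CZ}$ combine with Proposition \ref{conleycovering}(2) to rule out $k>1$.
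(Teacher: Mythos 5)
Your proposal is correct and follows essentially the same route as the paper: approximate $\lambda$ by non-degenerate strictly convex forms, apply the non-degenerate theorem to produce orbits $\gamma_{n}\in\mathcal{S}_{3}$ with $\mu_{\disk}(\gamma_{n}^{3})=3$ and action converging to $\tfrac{1}{3}c_{1}^{\mathrm{ECH}}$, pass to a $C^{\infty}$-limit via Arzel\`a--Ascoli, and rule out a multiply covered limit by combining lower semi-continuity of $\mu_{CZ}$, dynamical convexity, and Proposition \ref{conleycovering}(2), after which transversal isotopy transfers $3$-unknottedness and $sl_{\xi}^{\QQ}=-\tfrac{1}{3}$. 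The only cosmetic difference is that you assert exact attainment of the infimum for each $\lambda_{n}$, whereas the paper only needs $\int_{\gamma_{n}}\lambda_{n}\to\tfrac{1}{3}c_{1}^{\mathrm{ECH}}(L(3,1),\lambda)$; both work.
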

\begin{proof}[\bf Proof of Proposition \ref{degeneratel2}]
    Let $L=c^{\mathrm{ECH}}_{1}(L(3,1),\lambda)$. Take a sequence of strictly convex contact forms $\lambda_{n}$ such that $\lambda_{n} \to \lambda$ in $C^{\infty}$-topology and $\lambda_{n}$ is non-degenerate for each $n$. 
    Therefore we have
    \begin{equation}
        \inf_{\gamma\in \mathcal{S}_{3},\mu_{\disk}(\gamma^{3})=3}\int_{\gamma}\lambda_{n} = \frac{1}{3}\,c_{1}^{\mathrm{ECH}}(L(3,1),\lambda_{n})
    \end{equation}
    Note that  $c_{1}^{\mathrm{ECH}}(L(3,1),\lambda_{n})\to L$ as $n\to+\infty$. This means that there is a sequence of $\gamma_{n}\in \mathcal{S}_{3}(L(3,1),f_{n}\lambda)$ with $\mu_{\disk}(\gamma_{n}^{3})=3$ such that $\int_{\gamma_{n}}\lambda_{n}\to \frac{1}{3}L$. By Arzelà–Ascoli theorem, we can find a subsequence which converges to a periodic orbit $\gamma$ of $\lambda$  in $C^{\infty}$-topology. 
    \begin{cla}\label{limittingorbit}
       $\gamma$ is simple. In particular, $\gamma\in \mathcal{S}_{3}(L(3,1),\lambda)$ and $\mu_{\disk}(\gamma^{3})=3$.
    \end{cla}
    \begin{proof}[\bf Proof of Claim \ref{limittingorbit}]
        By the argument so far, there is a sequence of $\gamma_{n}\in \mathcal{S}_{3}(L(3,1),\lambda_{n})$ with $\mu_{\disk}(\gamma_{n}^{3})=3$ which converges to $\gamma$ in $C^{\infty}$. Suppose that $\gamma$ is not simple, that is, there is a simple orbit $\gamma'$ and $k\in \mathbb{Z}_{>0}$ with $\gamma'^{k}=\gamma$. From the lower semi-continuity of $\mu$, we have $\mu_{\disk}(\gamma_{n}^{3})\to\mu_{\disk}(\gamma'^{3k})=\mu_{\disk}((\gamma'^{3})^{k})=3$.  This contradicts Proposition \ref{conleycovering}. Therefore $\gamma$ is simple. This means that for sufficiently large $n$, $\gamma_{n}$ is transversally isotopic to $\gamma$.  Therefore, $\gamma$ is 3-unknotted and  has self-linking number $-\frac{1}{3}$.
        
        At last, we prove $\mu_{\disk}(\gamma^3)=3$. From the lower semi-continuity of $\mu$, we have $\mu_{\disk}(\gamma_{n}^3)\to\mu_{\disk}(\gamma^3)=3$ or $2$. $\mu_{\disk}(\gamma^3)=2$ contradicts the assumption of dynamical convexity. Thus we have $\mu_{\disk}(\gamma^3)=3$. We complete the proof.
    \end{proof}
    As discussion so far, there is a sequence of $\gamma_{n}\in \mathcal{S}_{3}(L(3,1),f_{n}\lambda)$ with $\mu_{\disk}(\gamma_{n}^3)=3$ and $\gamma\in \mathcal{S}_{3}(L(3,1),\lambda)$ with $\mu_{\disk}(\gamma^3)=3$ such that $\int_{\gamma_{n}}f_{n}\lambda\to \frac{1}{3}L$ and $\gamma_{n}$ converges to  $\gamma$ of $\lambda$  in $C^{\infty}$-topology. Therefore we have $\int_{\gamma}\lambda = \frac{1}{3}\,c_{1}^{\mathrm{ECH}}(L(3,1),\lambda)$ in $C^{\infty}$-topology. we complete the proof of Proposition \ref{degeneratel2}.
\end{proof}

Having Proposition \ref{degeneratel2}, to complete the proof of Theorem \ref{maintheorem}, it is sufficient to show the next proposition.
\begin{prp}\label{upperbound}
     Assume that $(L(3,1),\lambda)$ is strictly convex. Then
    \begin{equation}
   \frac{1}{3}\,c_{1}^{\mathrm{ECH}}(L(3,1),\lambda) \leq \inf_{\gamma\in \mathcal{S}_{3},\mu_{\disk}(\gamma^3)=3}\int_{\gamma}\lambda .
\end{equation}
\end{prp}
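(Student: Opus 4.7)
The plan is to approximate the (possibly degenerate) strictly convex $\lambda$ by non-degenerate dynamically convex contact forms whose Reeb dynamics coincide with that of $\lambda$ on a fixed neighborhood of any prescribed $\gamma\in\mathcal{S}_{3}$ with $\mu_{\mathrm{disk}}(\gamma^{3})=3$, apply the non-degenerate version of Theorem \ref{maintheorem} established in the preceding section, and pass to the limit via the $C^{0}$-continuity of the ECH spectrum (Proposition \ref{properties1}(4)).

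Fix $\gamma\in \mathcal{S}_{3}$ with $\mu_{\mathrm{disk}}(\gamma^{3})=3$. I would choose smooth functions $f_{n}\colon L(3,1)\to(0,\infty)$ that are identically $1$ on an open neighborhood $V$ of the image of $\gamma$, converge to $1$ in $C^{\infty}$, and are chosen generically so that $f_{n}\lambda$ is non-degenerate. By the approximation scheme of \cite[\S5]{Shi}, one can further arrange that $f_{n}\lambda$ is dynamically convex for all large $n$. Because $f_{n}\equiv 1$ on $V$, the Reeb vector field of $f_{n}\lambda$ agrees with $X_{\lambda}$ there, so $\gamma$ remains a closed orbit of $f_{n}\lambda$ with the same period, the same action $\int_{\gamma}f_{n}\lambda=\int_{\gamma}\lambda$, the same linearized return map, hence the same Conley-Zehnder index $\mu_{\mathrm{disk}}(\gamma^{3})=3$, and the same rational self-linking number $-1/3$ (the last two invariants being computed purely from data localized near $\gamma$ together with the topology of a rational disk, which is unaffected by the perturbation). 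Thus $\gamma\in\mathcal{S}_{3}(L(3,1),f_{n}\lambda)$ with $\mu_{\mathrm{disk}}(\gamma^{3})=3$.

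Applying the non-degenerate case of Theorem \ref{maintheorem} to $f_{n}\lambda$ yields
\[
\tfrac{1}{3}c_{1}^{\mathrm{ECH}}(L(3,1),f_{n}\lambda)\le \int_{\gamma}f_{n}\lambda=\int_{\gamma}\lambda.
\]
Since $f_{n}\to 1$ in $C^{0}$, Proposition \ref{properties1}(4) gives $c_{1}^{\mathrm{ECH}}(L(3,1),f_{n}\lambda)\to c_{1}^{\mathrm{ECH}}(L(3,1),\lambda)$, and letting $n\to\infty$ produces $\tfrac{1}{3}c_{1}^{\mathrm{ECH}}(L(3,1),\lambda)\le \int_{\gamma}\lambda$. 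Taking the infimum over all admissible $\gamma$ then yields Proposition \ref{upperbound}.

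The main obstacle is the simultaneous engineering of non-degeneracy, dynamical convexity, and fixedness near $\gamma$ in the perturbed forms $f_{n}\lambda$. Non-degeneracy is generic in $C^{\infty}$, and the cutoff making $f_{n}\equiv 1$ near $\gamma$ is harmless, but dynamical convexity constrains the Conley-Zehnder index of \emph{every} contractible Reeb orbit. Orbits of uniformly bounded period are handled by Arzel\`a--Ascoli compactness together with the lower semi-continuity of $\mu_{\mathrm{CZ}}$: a contractible orbit of $f_{n}\lambda$ with index $<3$ would subconverge, after passing to a subsequence, to a contractible Reeb orbit of $\lambda$ of index $<3$, contradicting strict convexity of $\lambda$. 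Orbits of large period are controlled through the asymptotic index growth furnished by strict convexity. This is precisely the argument carried out in detail in \cite[\S5]{Shi}, and it applies verbatim in the present setting.
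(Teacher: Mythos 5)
Your overall strategy---perturb to the non-degenerate case while freezing $\gamma$, apply the non-degenerate theorem, and pass to the limit using the $C^{0}$-continuity of $c_{1}^{\mathrm{ECH}}$---is natural, but it hinges on a step that is not known to be achievable and that the paper's proof is specifically engineered to avoid: the existence of functions $f_{n}\to 1$ with $f_{n}\equiv 1$ near $\gamma$ such that $f_{n}\lambda$ is simultaneously \emph{globally} non-degenerate and \emph{globally} dynamically convex. The perturbation lemma you are implicitly invoking (Lemma \ref{seqsmooth} in the paper, quoting \cite[Lemmas 6.8, 6.9]{HWZ4}) only guarantees that all periodic orbits of period $<n$ are non-degenerate and that all contractible orbits of period $<n$ have Conley--Zehnder index $\geq 3$; it says nothing about orbits of period $\geq n$. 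Your claim that ``orbits of large period are controlled through the asymptotic index growth furnished by strict convexity'' does not survive scrutiny: once $f_{n}\neq 1$ away from $\gamma$, the form $f_{n}\lambda$ is no longer the restriction of $\lambda_{0}$ to a convex hypersurface, and $C^{\infty}$-closeness of $f_{n}$ to $1$ controls the linearized flow only on bounded time intervals, so no index lower bound for very long orbits of $f_{n}\lambda$ follows. No such argument is carried out in \cite[\S 5]{Shi} or in this paper. Consequently you cannot legitimately apply the non-degenerate case of Theorem \ref{maintheorem} to $f_{n}\lambda$, and the proposal has a genuine gap at its central step.

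The paper instead argues by contradiction. Assuming some $\gamma_{\lambda}\in\mathcal{S}_{3}$ with $\mu_{\mathrm{disk}}(\gamma_{\lambda}^{3})=3$ satisfies $\int_{\gamma_{\lambda}}\lambda<\frac{1}{3}c_{1}^{\mathrm{ECH}}(L(3,1),\lambda)$, it fixes one $f_{N}$ from Lemma \ref{seqsmooth} with $N$ large compared to $c_{1}^{\mathrm{ECH}}$, and then, for genuinely non-degenerate dynamically convex forms $f\lambda$ with $f<f_{N}$ (which need not retain $\gamma_{\lambda}$ as an orbit), it invokes the Hryniewicz--Salom\~{a}o machinery (Lemma \ref{riginalchange}, adapting \cite[Proposition 3.1]{HrS} together with \cite[Proposition 6.8]{HrLS}) to manufacture a new orbit $\gamma\in\mathcal{S}_{3}(L(3,1),f\lambda)$ with $\mu_{\mathrm{disk}}(\gamma^{3})=3$ and $\int_{\gamma}f\lambda<\int_{\gamma_{\lambda}}\lambda$; combined with the non-degenerate theorem applied to $f\lambda$ this yields the contradiction. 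To repair your argument you would either have to prove the global approximation statement (a nontrivial result in its own right) or replace it with an argument that only uses non-degeneracy and dynamical convexity below a large action threshold together with an existence mechanism for orbits of the truly non-degenerate comparison forms, which is in effect what the paper does.
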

\begin{proof}[\bf Proof of Proposition \ref{upperbound}]
    We prove this by contradiction. Suppose that there exists $\gamma_{\lambda}\in \mathcal{S}_{3}(L(3,1),\lambda)$ with $\mu_{\disk}(\gamma_{\lambda}^3)=3$ such that $\frac{1}{3}\,c_{1}^{\mathrm{ECH}}(L(3,1),\lambda)>\int_{\gamma_{\lambda}}\lambda$.

    \begin{lem}\label{seqsmooth}
         There exists a sequence of smooth functions $f_{n}:L(3,1)\to \mathbb{R}_{>0}$  such that $f_{n}\to 1$ in $C^{\infty}$-topology and satisfying $f_{n}|_{\gamma_{\lambda}}=1$ and $df_{n}|_{\gamma_{\lambda}}=0$. Moreover, all periodic orbits of $X_{f_{n}\lambda}$ of periods $<n$ are non-degenerate and all contractible orbits of periods $<n$ have Conley-Zehnder index $\geq 3$. In addition, $\gamma_{\lambda}$ is a non-degenerate periodic orbit of $X_{f_{n}\lambda}$ with $\mu_{\disk}(\gamma_{\lambda}^3)=3$ for every $n$.
    \end{lem}
    \begin{proof}[\bf Proof of Lemma \ref{seqsmooth}]
        See \cite[Lemma 6.8, 6.9]{HWZ4}
    \end{proof}

For a sequence of smooth functions $f_{n}:L(3,1)\to \mathbb{R}_{>0}$ in Lemma \ref{seqsmooth}, fix $N>>0$ sufficient large so that $c_{1}^{\mathrm{ECH}}(L(3,1),f_{N}\lambda)>\int_{\gamma_{\lambda}}\lambda$ and $N>2c_{1}^{\mathrm{ECH}}(L(3,1),f_{N}\lambda)$. We may take such $f_{N}$ because $c_{1}^{\mathrm{ECH}}$ is continuous in $C^{0}$-topology.

\begin{lem}\label{riginalchange}
    Let $f:L(3,1)\to \mathbb{R}_{>0}$ be a smooth function such that $f(x)<f_{N}(x)$ for any $x\in L(3,1)$. Suppose that $f\lambda$ is non-degenerate dynamically convex. Then there exists a simple periodic orbit $\gamma \in \mathcal{S}_{3}(L(3,1),f\lambda)$ 
with $\mu(\gamma)=1$ such that $\int_{\gamma}f\lambda<\int_{\gamma_{\lambda}}\lambda$.

\end{lem}
\begin{proof}[\bf Outline of the proof of 
Lemma \ref{riginalchange}]
See \cite[Proposition 3.1]{HrS}. In the proof and statement of \cite[Proposition 3.1]{HrS}, ellipsoids are used instead of $(L(3,1),f_{N}\lambda)$, but the important point in the proof is to find $3$-unknotted self-linking number $-\frac{1}{3}$ orbit $\gamma$ with  $\mu_{\disk}(\gamma^3)=3$ and construct a suitable $J$-holomorphic curve from \cite[Proposition 6.8]{HrLS}. Now, we have $\gamma_{\lambda} \in \mathcal{S}_{3}(L(3,1),f_{N}\lambda)$ 
with $\mu_{\disk}(\gamma_{\lambda}^3)=3$ and hence  by applying \cite[Proposition 6.8]{HrLS}, we can construct a suitable $J$-holomorphic curve. By using this curves instead of ones in the original proof, we can show Proposition \ref{riginalchange}. Here we note that the discussion in the proof of Lemma \ref{lem:boundaryop} is needed to prove the same result of \cite[Theorem 3.15]{HrS}
\end{proof}
Now, we would complete the proof of Proposition.  Let $f:L(3,1)\to \mathbb{R}_{>0}$ be a smooth function such that $f(x)<f_{N}(x)$ for any $x\in L(3,1)$, $f\lambda$ be non-degenerate strictly convex and $\int_{\gamma_{\lambda}}\lambda<c_{1}^{\mathrm{ECH}}(L(3,1),f\lambda)<c_{1}^{\mathrm{ECH}}(L(3,1),f_{N}\lambda)$. We can check easily that it is possible to take such $f$.  Due to Lemma \ref{riginalchange}, there exists a simple periodic orbit $\gamma \in \mathcal{S}_{3}(L(3,1),f\lambda)$ 
with $\mu_{\disk}(\gamma^3)=3$ such that $\int_{\gamma}f\lambda<\int_{\gamma_{\lambda}}\lambda$. Since  $\inf_{\gamma\in \mathcal{S}_{3},\mu_{\disk}(\gamma^3)=3}\int_{\gamma}f\lambda = \frac{1}{3}\,c_{1}^{\mathrm{ECH}}(L(3,1),f\lambda)$, we have $\int_{\gamma_{\lambda}}\lambda<c_{1}^{\mathrm{ECH}}(L(3,1),f\lambda)\leq \int_{\gamma}f\lambda$. This is a contradiction. We complete the proof.
\end{proof}


\begin{thebibliography}{99}
\bibitem[AbMa1]{AbMa1}M. Abreu, L. Macarini, Dynamical convexity and elliptic periodic orbits for Reeb flows. Math. Ann. 369 (2017), 331–386.
\bibitem[AbMa2]{AbMa2} M. Abreu, L. Macarini, 
Dynamical implications of convexity beyond dynamical convexity.  Calc. Var. Partial Differential Equations (2022) volume 61, Art. 116

\bibitem[Ar]{Ar}M. Arikan, Planar contact structures with binding number three, in Proceedings of Gökova Geometry -- Topology Conference, 2007, pp. 90--124.
\bibitem[BE]{BE}K. Baker and J. Etnyre, Rational linking and contact geometry. Perspectives in analysis, geometry, and topology, 19–37, Progr. Math., 296, Birkhäuser/Springer, New York, 2012.


\bibitem[CHP]{CHP} D. Cristofaro-Gardiner, M. Hutchings and D. Pomerleano, Torsion contact forms in three dimensions have two or infinitely many Reeb orbits,  Geom. Top, 23 (2019), 3601-3645.
\bibitem[CHR]{CHR}D. Cristofaro-Gardiner, M. Hutchings and V.G.B. Ramos, The asymptotics of ECH capacities, Invent. Math. 199 (2015), 187-214
\bibitem[DDE]{DDE}G. Dell’Antonio, B. D’Onofrio, I. Ekeland, Periodic solutions of elliptic type for strongly nonlinear Hamiltonian systems. The Floer memorial volume, 327–333, Progr. Math., 133, Birkh¨auser, Basel, 1995
\bibitem[Dr]{Dr} D. Dragnev, Fredholm theory and transversality for noncompact pseudoholomorphic maps in symplectizations, Comm. Pure Appl. Math 57 (2004), 726–763.
\bibitem[Fe]{Fe} B. Ferreira, Elliptic Reeb orbit on some real projective three-spaces via ECH, 	arXiv:2305.06257 
\bibitem[HWZ1]{HWZ1}H. Hofer, K. Wysocki, and E. Zehnder, Properties of pseudoholomorphic curves in symplectisations. I. Asymptotics. Ann. Inst. H. Poincaré C Anal. Non Linéaire 13 (1996), no. 3, 337–379.
\bibitem[HWZ2]{HWZ2}  H. Hofer, K. Wysocki, and E. Zehnder, Properties of pseudo-holomorphic curves in symplectisations. II. Embedding controls and algebraic invariants. Geom. Funct. Anal. 5 (1995), no. 2, 270–328.
\bibitem[HWZ3]{HWZ3} H. Hofer, K. Wysocki, and E. Zehnder, A characterisation of the tight three-sphere. Duke J.
Math, 81(1):159–226, 1996.
\bibitem[HWZ4]{HWZ4}H. Hofer, K. Wysocki, and E. Zehnder, The dynamics on three-dimensional strictly convex energy surfaces. Ann. of Math. (2) 148 (1998), no. 1, 197–289.
\bibitem[HWZ5]{HWZ5} H. Hofer, K. Wysocki, and E. Zehnder, A characterization of the tight 3-sphere. II. Comm. Pure Appl. Math. 52 (1999), no. 9, 1139–1177.
\bibitem[Hr1]{Hr1} U. L. Hryniewicz, Fast finite-energy planes in symplectizations and applications. Transactionsof the American Mathematical Society, 364(4):1859–1931, 2012.
\bibitem[Hr2]{Hr2}U. L. Hryniewicz, Systems of global surfaces of section for dynamically convex Reeb flows on the 3-sphere.  J. Symplectic Geom. 12 (2014), no. 4, 791–862.

\bibitem[HrHuRa]{HrHuRa} U. L. Hryniewicz,  M. Hutchings and V.G.B. Ramos, Unknotted Reeb orbits and the first ECH capacity, in preparation.
\bibitem[HrLS]{HrLS} U. L. Hryniewicz, J. E. Licata, and P. A. S. Salomão, A dynamical characterization of universally tight lens spaces. Proceedings of the London Mathematical Society, 110(1):213–269, 2015
\bibitem[HrS]{HrS} U. L. Hryniewicz,  and P. A. S. Salomão, Elliptic bindings for dynamically convex Reeb flows on the real projective three-space. Calc. Var. Partial Differential Equations 55 (2016), no. 2, Art. 43, 57 pp.
\bibitem[HuWa]{HuWa} X. Hu, W. Wang, Y. Long. Resonance identity, stability, and multiplicity of closed characteristics on compact convex hypersurfaces. Duke Math. J. 139 (2007), no. 3, 411–462.



\bibitem[H1]{H1}M. Hutchings, An index inequality for embedded pseudoholomorphic curves in symplectizations,
J. Eur. Math. Soc. 4 (2002), 313-361.
\bibitem[H2]{H2}M. Hutchings, Quantitative embedded contact homology J. Diff. Geom. 88 (2011), 231-266.
\bibitem[H3]{H3}M. Hutchings, Lecture notes on embedded contact homology. Contact and symplectic topology, 389–484, Bolyai Soc. Math. Stud., 26, János Bolyai Math. Soc., Budapest, 2014.
\bibitem[HT1]{HT1}M. Hutchings and C. H. Taubes, Gluing pseudoholomorphic curves along branched covered cylinders I, 
J. Symp. Geom. 5 (2007), 43-137.
\bibitem[HT2]{HT2}M. Hutchings and C. H. Taubes, Gluing pseudoholomorphic curves along branched covered cylinders II,
J. Symp. Geom. 7 (2009), 29-133.
\bibitem[HT3]{HT3}M. Hutchings and C. H. Taubes, The Weinstein conjecture for stable Hamiltonian structures,  Geom. Top, 13 (2009), 901-941.
\bibitem[KM]{KM}P. Kronheimer and T. Mrowka, Monopoles and three-manifolds, New Math. Monogr. 10, Cambridge University Press (2007).
\bibitem[LoZ]{LoZ}Y. Long and C. Zhu. Closed characteristics on compact convex hypersurfaces in R2n. Ann.of Math. (2) 155 (2002), no. 2, 317–368.
\bibitem[Sch]{Sch}A. Schneider,
Global surfaces of section for dynamically convex Reeb flows on lens spaces. Trans. Amer. Math. Soc. 373 (2020), no. 4, 2775–2803.
\bibitem[Shi]{Shi} T. Shibata, Dynamically convex and global surface of section in $L(p,p-1)$ from the viewpoint of ECH, arXiv:2306.04132
\bibitem[T]{T1}C. H. Taubes, Embedded contact homology and Seiberg-Witten Floer homology I, Geom. Top. 14 (2010), 2497–2581.




















\end{thebibliography}
\end{document}